\newcommand*{\vertchar}[2][0pt]{%
  \tikz[
    inner sep=0pt,
    shorten >=-.15ex,
    shorten <=-.15ex,
    line cap=round,
    baseline=(c.base),
  ]\draw [line width=0.05mm,  black]
    (0,0) node (c) {#2}
    ($(c.south)+(#1,0)$) -- ($(c.north)+(#1,0)$);%
}
\newcommand{\bZ}{\mathbb{Z}}
\newcommand{\cT}{\mathcal{T}}
\newcommand{\cE}{\mathcal{E}}
\newcommand{\cL}{\mathcal{L}}
\newcommand{\tA}{\tilde{A}}
\newcommand{\tB}{\tilde{B}}
\newcommand{\tC}{\tilde{C}}
\newcommand{\tX}{\tilde{X}}
\newcommand{\tY}{\tilde{Y}}
\newcommand{\tv}{\tilde{v}}
\newcommand{\te}{\tilde{e}}
\newcommand{\tf}{\tilde{f}}
\newcommand{\hA}{\widehat{A}}
\newcommand{\hX}{\widehat{X}}
\newcommand{\hY}{\widehat{Y}}
\newcommand{\hgamma}{\widehat{\gamma}}
\newcommand{\bA}{\overline{A}}
\newcommand{\bX}{\overline{X}}
\newcommand{\brho}{\overline{\rho}}
\newcommand{\bH}{\overline{H}}
\newcommand{\bM}{\overline{M}}
\newcommand{\bN}{\overline{N}}
\newcommand{\bD}{\overline{D}}
\newcommand{\bK}{\overline{K}}
\newcommand{\bd}{\overline{d}}
\newcommand{\bm}{\overline{m}}
\newcommand{\bS}{\overline{S}}
\newcommand{\RNum}[1]{\uppercase\expandafter{\romannumeral #1\relax}}
\newcommand{\CAT}{\operatorname{CAT}}
\newcommand{\diam}{\operatorname{diam}}
\newcommand{\Hull}{\operatorname{Hull}}
\newcommand{\Aut}{\operatorname{Aut}}
\newcommand{\lk}{\operatorname{lk}}
\newcommand{\pf}{\pitchfork}
\newcommand{\la}{\langle}
\newcommand{\ra}{\rangle}
\newcommand{\lv}{\lvert}
\newcommand{\rv}{\rvert}
\newcommand{\llf}{\left\lfloor}
\newcommand{\rrf}{\right\rfloor}
\newcommand{\pstab}{\operatorname{\textup{\vertchar{S}tab}}}
\newcommand{\stab}{\operatorname{Stab}}
\newtheorem{thm}{Theorem}[section]
\newtheorem{prop}[thm]{Proposition}
\newtheorem{cor}[thm]{Corollary}
\newtheorem{lem}[thm]{Lemma}
\newtheorem{exam}[thm]{Example}
\newtheorem{conj}[thm]{Conjecture}
\theoremstyle{definition}
\newtheorem{defn}[thm]{Definition}
\newtheorem{rmk}[thm]{Remark}
\numberwithin{equation}{section}
\begin{document}

\title{Finite Stature in Graphs of Cube Complexes with Cyclonormal Edges}
\author{Changqian Li}

\address{Department of Mathematics, The Ohio State University, Columbus, OH, 43210, U.S.}
\email{li.13132@osu.edu}

\begin{abstract}
Given a compact cube complex $X$ that splits as a graph of virtually special cube complexes. Suppose that the fundamental groups of edge spaces are cyclonormal in the fundamental groups of adjacent vertex spaces. We show that $\pi_1X$ has finite stature with respect to vertex groups in the sense of Huang--Wise. In particular, when vertex groups are hyperbolic, this allows us to deduce virtual specialness of $X$.
\end{abstract}

\maketitle

\section{Introduction}
The notion of virtually special cube complexes was introduced by Haglund and Wise in \cite{Haglund-Wise2008}. Building on this framework, Haglund and Wise proved the Malnormal Combination Theorem below.
\begin{thm}[{\cite[Theorem 8.2]{haglund-wise2012}}]\label{malnormal combination}
Let $X$ be a compact connected nonpositively cured cube complex, and let $P$ be a hyperplane in $X$ such that the following hold:
\begin{enumerate}
    \item $\pi_1X$ is hyperbolic.
    \item $P$ is an embedded $2$-sided hyperplane in $X$.
    \item $\pi_1P$ is malnormal in $\pi_1X$.
    \item Each component of $X\setminus N_o(P)$ is virtually special.
\end{enumerate}
Then $X$ is virtually special.
\end{thm}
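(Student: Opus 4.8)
The plan is to cut $X$ along $P$, realize $\pi_1X$ as the fundamental group of the resulting graph of spaces, and then glue together finite special covers of the pieces along copies of the hyperplane carrier $N=N(P)$ in a pattern chosen carefully enough that the resulting finite cover of $X$ is itself special. Since $P$ is an embedded $2$-sided hyperplane, removing $N_o(P)$ and taking closures gives locally convex subcomplexes $Y_1,\dots,Y_k$ of $X$ (the components of $X\setminus N_o(P)$, so $k\le 2$), and $X$ is recovered by gluing the $Y_i$ to $N\times[-1,1]$ along its boundary copies of $N$; correspondingly $\pi_1X$ splits as a graph of groups with vertex groups the $\pi_1Y_i$ and edge groups conjugates of $\pi_1P\cong\pi_1N$. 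Because each $Y_i\hookrightarrow X$, and hence each composite $N\hookrightarrow Y_i\hookrightarrow X$, is a local isometry of NPC cube complexes, the edge group $\pi_1P$ is convex, hence quasiconvex, in the hyperbolic group $\pi_1X$, and each vertex group $\pi_1Y_i$ is quasiconvex and in particular hyperbolic; moreover each $Y_i$ is virtually special by hypothesis.

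First I would replace each $Y_i$ by a finite special cover and apply Haglund--Wise canonical completion and retraction to the local isometry $N\to Y_i$: this produces a finite cover $\hat Y_i\to Y_i$ into which $N$ embeds together with a retraction $\hat Y_i\to N$, exhibiting $\pi_1P$ as a virtual retract of $\pi_1Y_i$ and hence giving separability of $\pi_1P$, and of the finitely many relevant double cosets, in each $\pi_1Y_i$ and in $\pi_1X$. Using this separability I would pass to further finite covers in which every elevation of $N$ is an embedded copy of $N$ with pairwise disjoint carriers, and then synchronize the covers across the cut so that the lifts of $N$ on the two sides of each edge correspond bijectively and the glued complex $\hat X$ is a genuine finite cover of $X$; by construction $\hat X$ is again a graph of special cube complexes glued along copies of $N$.

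The heart of the matter — and the step I expect to be the main obstacle, as in the original argument — is to arrange that $\hat X$ is special, i.e.\ that its hyperplanes embed, are $2$-sided, and neither self-osculate nor inter-osculate. Hyperplanes of $\hat X$ lying inside a single vertex space $\hat Y_i$ are controlled because $\hat Y_i$ is special; the genuine danger is a hyperplane that crosses the edge spaces, possibly repeatedly, and this is exactly where the hypotheses enter. Two-sidedness of $P$ ensures the edge hyperplanes separate and lift without becoming $1$-sided, while malnormality of $\pi_1P$ together with hyperbolicity of $\pi_1X$ forces distinct elevations of $N$ in the universal cover $\tilde X$ to meet in uniformly bounded subcomplexes, so that in a suitable finite cover they become disjoint and non-osculating; this prevents a hyperplane dual to the edge from recurring and rules out bad interactions between hyperplanes entering $N$ from the two sides. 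Folding these finitely many additional conditions into the choice of finite covers in the previous step — again possible by quasiconvex separability — then makes $\hat X$ special, and since $\hat X$ is a finite cover of $X$, the theorem follows.
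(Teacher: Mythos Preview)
This paper does not contain a proof of this statement. Theorem~\ref{malnormal combination} is quoted from \cite[Theorem~8.2]{haglund-wise2012} as background and motivation; the present paper neither reproves it nor supplies an alternative argument. In fact the author explicitly remarks that the main results here rely (via \cite{Huang--Wise2024fintiestature}) on the Malnormal Special Quotient Theorem and in particular on Theorem~\ref{malnormal combination} itself, so that ``Theorem~\ref{main result} does not provide a new proof of Theorem~\ref{malnormal combination}.'' There is therefore nothing in this paper to compare your proposal against.

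As a side remark on your sketch itself: the overall shape---cut along $P$, use canonical completion and retraction plus separability to build compatible finite special covers of the pieces, then glue and verify the hyperplane pathologies vanish---is indeed the architecture of the Haglund--Wise argument. But your final paragraph is where the real work lies, and your account of it is too vague to count as a proof. Phrases like ``folding these finitely many additional conditions into the choice of finite covers'' hide exactly the difficulty: one must carefully control wall projections and show that the relevant double-coset separability suffices to kill self-osculation and inter-osculation of hyperplanes that cross the edge space, and this requires a precise combinatorial argument, not just an appeal to malnormality and bounded intersection. If you want to write this up as a genuine proof you will need to engage with those details, which is essentially the content of \cite[\S7--8]{haglund-wise2012}.
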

This theorem plays an essential role in the proofs of Wise's Malnormal Quasiconvex Hierarchy Theorem \cite{AgolGrovesManning2016} and Malnormal Special Quotient Theorem \cite{wise2021}. 
These results are also key ingredients in Agol’s proof of the long-standing Virtual Haken Conjecture \cite{Agol2013VirtualHaken}.
However, the above theorems are restricted to the hyperbolic setting. Related generalizations in the relatively hyperbolic context have been studied in \cite{wise2021}, \cite{GrovesManning2022}, and \cite{OregonReyes2023}. 
Given the many interesting consequences of virtual specialness, it is desirable to extend this theory to groups that are not necessarily (relatively) hyperbolic, which remains largely unexplored.

Recall that a subgroup $H$ of a group $G$ is \textit{cyclonormal} if for any $g\in G\setminus H$, the intersection $H\cap gHg^{-1}$ is either trivial or a cyclic subgroup.
We review the notion of \textit{graph of cube complexes} in Section \ref{section: graph of cube complexes}. Suppose a cube complex $X$ splits as a graph of nonpositively curved cube complexes. We say that it has \textit{cyclonormal edges} if the fundamental group of each edge space is cyclonormal in fundamental groups of adjacent vertex spaces (see Definition \ref{definition: almost cyclonormal edges}).
Our main result is the following:
\begin{thm}[cf. Corollary~\ref{corollary: virtually special}]\label{main result}
Let $X$ be a compact cube complex that splits as a graph of nonpositively curved cube complexes with cyclonormal edges. If the vertex groups are hyperbolic, then $X$ is virtually special.
\end{thm}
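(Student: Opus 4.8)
The plan is to deduce the statement from the paper's main technical result---that $G := \pi_1X$ has finite stature with respect to the family of vertex groups $\{G_v\}$, where $G_v := \pi_1X_v$---together with the combination machinery for virtually special groups of Huang and Wise. Since $X$ is compact and carries a graph-of-cube-complexes structure, $G$ acts properly and cocompactly on a $\CAT(0)$ cube complex $\widetilde{X}$ assembled from the pieces (Section~\ref{section: graph of cube complexes}); the underlying graph $\Gamma$ is finite, so there are only finitely many $G_v$, and each acts cocompactly on a convex subcomplex of $\widetilde{X}$---a lift of $X_v$ after cubical subdivision---so is cubically convex-cocompact in $G$. By hypothesis each $G_v$ is hyperbolic, hence by Agol's theorem \cite{Agol2013VirtualHaken} the compact nonpositively curved cube complex $X_v$ is virtually special. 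Granting finite stature, one invokes the combination theorem of Huang and Wise---a generalization of Theorem~\ref{malnormal combination} in which the single malnormal hyperplane subgroup is replaced by a family of subgroups organized into a finite ``stature'' poset---asserting that a group acting properly and cocompactly on a $\CAT(0)$ cube complex, with finite stature with respect to a finite family of cubically convex-cocompact virtually special subgroups such that every infinite intersection of conjugates from that family is hyperbolic, is virtually special. So it remains to verify finite stature and to see that these infinite intersections are hyperbolic and virtually special.

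For finite stature I would argue on the Bass--Serre tree $T$ of the splitting $G = \pi_1(\Gamma,\{G_v\},\{G_e\})$, $G_e := \pi_1X_e$, whose vertex and edge stabilizers are the conjugates of the $G_v$ and the $G_e$ respectively. Let $I \le G$ be infinite and an intersection of finitely many conjugates of vertex groups, say $I = \bigcap_i\Stab(y_i)$ for vertices $y_i$ of $T$. Then $I$ fixes pointwise the subtree $Y = \Hull\{y_i\}$, so $I = \bigcap_{y \in Y^{(0)}}\Stab(y)$. If $Y$ is a single vertex then $I$ is a conjugate of some $G_v$; if $Y$ is a single edge $\varepsilon$ then $I = \Stab(\varepsilon)$ is a conjugate of some $G_e$; otherwise $Y$ has a vertex $a$ incident to two of its edges $\varepsilon_1,\varepsilon_2$, so $I \le \Stab(\varepsilon_1)\cap\Stab(\varepsilon_2)$, an intersection of two distinct conjugates of edge groups inside $\Stab(a)\cong G_v$, and the cyclonormal hypothesis forces this intersection---hence $I$---to be infinite cyclic. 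As $\Gamma$ is finite, this trichotomy shows that, apart from the virtually cyclic ones, there are only finitely many conjugacy classes of infinite intersections of conjugates of vertex groups; this is what finite stature in the sense of Huang and Wise requires.

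It remains to see that the subgroups arising are hyperbolic and virtually special. Each edge group $G_e$ acts cocompactly on a convex subcomplex of the hyperbolic cube complex $\widetilde{X}_v$, so is quasiconvex in the hyperbolic group $G_v$ and hence itself hyperbolic; being a quasiconvex subgroup of a virtually special hyperbolic group, it is virtually special. The virtually cyclic intersections are evidently hyperbolic and virtually special. With every subgroup in the stature poset thus hyperbolic and virtually special (and, for the vertex and edge groups, cubically convex-cocompact in $G$), the Huang--Wise combination theorem applies and gives that $G = \pi_1X$, and hence $X$, is virtually special.

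The main obstacle is the finite-stature verification, and within it the branching case above: controlling the intersection of two conjugates of (possibly distinct) edge groups inside a single vertex group, which is exactly where cyclonormality is used. Making this precise needs the correct formulation of cyclonormal edges (Definition~\ref{definition: almost cyclonormal edges})---in particular an understanding of how distinct edge spaces of a common vertex space interact---together with the bookkeeping required to organize all infinite intersections of conjugates of vertex groups into a stature poset in the precise sense of Huang and Wise. A secondary point requiring care is the construction of the ambient $\CAT(0)$ cube complex on which $G$ acts from the graph-of-cube-complexes data, so that the convexity and cocompactness hypotheses of the combination theorem are genuinely satisfied.
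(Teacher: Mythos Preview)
Your overall strategy---verify finite stature with respect to the vertex groups, then invoke Huang--Wise (Theorem~\ref{theorem: fintie stature equivalent to virtual specialness})---is exactly the paper's route to Theorem~\ref{main result}. The gap is entirely in your finite-stature argument, and it is twofold.

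First, cyclonormality does not say what you use. The hypothesis is that each edge group $G_e$ is cyclonormal in the adjacent vertex group $G_v$, i.e.\ $G_e \cap gG_eg^{-1}$ is cyclic for $g \in G_v \setminus G_e$. It says nothing about $G_{e_1} \cap gG_{e_2}g^{-1}$ when $e_1 \ne e_2$ are distinct edges at $v$. At a branch vertex $a$ of your subtree $Y$, the two incident edges $\varepsilon_1,\varepsilon_2$ may well be lifts of distinct edges of $\Gamma$, and then $\Stab(\varepsilon_1)\cap\Stab(\varepsilon_2)$ is uncontrolled. The paper explicitly warns that, under cyclonormal edges alone, pointwise stabilizers of subtrees can be more complicated than cyclic; this is precisely why Definition~\ref{definition: almost cyclonormal edges} of \emph{almost} cyclonormal edges imposes a separate condition on triples involving two distinct edge groups.

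Second, and more seriously, you discard the cyclic case. Finite stature (Definition~\ref{definition: finite stature}) requires that \emph{all} infinite subgroups of the form $G_v \cap C$---including the infinite cyclic ones---fall into finitely many $G_v$-conjugacy classes. Your sentence ``apart from the virtually cyclic ones, there are only finitely many conjugacy classes \ldots; this is what finite stature requires'' is simply false: there is no a priori reason the infinite cyclic intersections arising from long paths in $\cT$ should lie in finitely many $G_v$-conjugacy classes, and exhibiting such a bound is the entire technical content of the paper. Sections~\ref{section: quasiline}--\ref{section: main theorem} develop the machinery of quasilines and fellow-traveling convex subcomplexes precisely to produce, for any long path $\rho$ in $\cT$, an element of $\pstab(\rho)$ that moves a chosen geodesic a bounded amount, thereby uniformly bounding the diameter of the representing local isometry and hence the number of conjugacy classes (see Lemma~\ref{lemma: vefity finite stature on paths} and the proof of Theorem~\ref{theorem: main theorem}). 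Your trichotomy argument does not even begin this analysis.
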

Notice that the main difference between this result and Theorem~\ref{malnormal combination} is that, although the vertex groups are required to be hyperbolic, $\pi_1 X$ itself need not be hyperbolic or relatively hyperbolic. 
The action of $\pi_1X$ on the associated Bass--Serre tree is not necessarily acylindrical. Moreover, we caution the reader that, despite the assumption of cyclonormal edges, the pointwise stabilizers of infinite subtrees in the Bass--Serre tree can be more complicated than cyclic groups.

Furthermore, Theorem~\ref{main result} generalizes the malnormality assumption in Theorem~\ref{malnormal combination} to cyclonormality.
Note that Theorem~\ref{main result} fails if we drop the assumption of cyclonormal edges.
For example, Wise constructed a graph of graphs in \cite{Wise1996NPCSquareComplexes} that is not virtually special.

One motivation for studying virtual specialness outside the hyperbolic setting comes from the classification of lattices in certain locally compact groups. For example, let $\Delta$ be a right-angled building realized as a graph product of finite groups. Then uniform lattices in $\Aut(\Delta)$ are commensurable provided that they are virtually special \cite{Shephard2024rightangledbuilding}. Moreover, virtual specialness is closely related to the quasi-isometry rigidity of certain cubical groups \cite{Huang2018Commensurability}. 

While Theorem~\ref{main result} generalizes Theorem~\ref{malnormal combination}, its proof is based on \cite[Theorem~1.4]{Huang--Wise2024fintiestature}, whose argument relies on the Malnormal Special Quotient Theorem and, in particular, on Theorem~\ref{malnormal combination}. Consequently, Theorem~\ref{main result} does not provide a new proof of Theorem~\ref{malnormal combination}.

In fact, Theorem~\ref{main result} follows as a corollary of Theorem~\ref{main theorem} via the notion of finite stature.

A group $G$ has \textit{finite stature} with respect to a collection $\Omega = \{G_\lambda\}_{\lambda\in \Lambda}$ of subgroups if for each $H\in \Omega$, there are finitely many $H$-conjugacy classes of infinite subgroups of the form $H\cap C$, where $C$ is an intersection of $G$-conjugates of elements of $\Omega$.
Finite stature was introduced by Huang and Wise in~\cite{Huang--Wise2019statureseparabilitygraphsgroups}. 
In the same paper, when a group $G$ splits as a graph of groups, finite stature is reformulated in terms of pointwise stabilizers for the action of $G$ on the Bass--Serre tree. 
In particular, when a compact cube complex $X$ splits as a graph of nonpositively curved cube complexes with hyperbolic vertex groups, the same authors showed that $X$ is virtually special if and only if $\pi_1X$ has finite stature with respect to vertex groups \cite{Huang--Wise2024fintiestature}.
Theorem \ref{main result} is a corollary of the following:
\begin{thm}\label{main theorem}
Let $X$ be a compact cube complex that splits as a graph of virtually special cube complexes with cyclonormal edges, then $\pi_1X$ has finite stature with respect to the vertex groups. 
\end{thm}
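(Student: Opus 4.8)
\medskip
\noindent\emph{Proof outline.}\quad
The plan is to work with the action of $G=\pi_1X$ on the Bass--Serre tree $T$ of the splitting from Section~\ref{section: graph of cube complexes}, whose underlying graph $\Gamma$ is finite, with vertex groups $\{G_v\}$ and (by Definition~\ref{definition: almost cyclonormal edges}, cyclonormal) edge groups $\{G_e\}$. The first step is to translate the conclusion: any intersection $C$ of $G$-conjugates of vertex groups is the pointwise stabilizer $\operatorname{Fix}(S)$ of the subtree $S\subseteq T$ spanned by the vertices fixed by those conjugates, and $G_{\tilde v}\cap C=\operatorname{Fix}(S\cup\{\tilde v\})$; so by the reformulation of finite stature for graphs of groups in \cite{Huang--Wise2019statureseparabilitygraphsgroups}, Theorem~\ref{main theorem} is equivalent to the assertion that for each vertex $\tilde v$ of $T$ there are only finitely many $\Stab(\tilde v)$-conjugacy classes among the infinite subgroups of the form $\operatorname{Fix}(S)$, where $S$ ranges over subtrees of $T$ containing $\tilde v$. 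Two structural inputs will be used throughout: since by the definition of a graph of cube complexes each edge space is a compact locally convex subcomplex of the adjacent vertex spaces, each $G_e$ is convex-cocompact in each adjacent $G_v$, hence (bounded packing of convex-cocompact subgroups of $\CAT(0)$ cube complex groups) it has finite height in $G_v$, and every intersection of $G_v$-conjugates of edge groups is itself convex-cocompact; and since each $G_v$ is virtually special, it is separable on its convex-cocompact subgroups.

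The core of the proof is a bound on the subtrees $S$ carrying an infinite pointwise stabilizer. Fix a vertex $\tilde v$ and an infinite subgroup $K=\operatorname{Fix}(S)$ with $\tilde v\in S$; since $K=\operatorname{Fix}(\operatorname{Fix}(K))$ we may replace $S$ by $\operatorname{Fix}(K)$. Looking at the edges $\{\tilde e_\alpha\}$ of $S$ incident to $\tilde v$, one has $K\le\bigcap_\alpha\Stab(\tilde e_\alpha)$, an intersection of $\Stab(\tilde v)$-conjugates of edge groups inside $\Stab(\tilde v)\cong G_v$, and this is precisely where cyclonormality enters: if two incident edges project to the same oriented edge of $\Gamma$, then $\Stab(\tilde e_\alpha)\cap\Stab(\tilde e_\beta)$ is trivial or infinite cyclic, so whenever $K$ is not virtually cyclic the edges of $S$ at $\tilde v$ project to pairwise distinct oriented edges of $\Gamma$ and there are at most $2\lvert E(\Gamma)\rvert$ of them. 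Combining this with finite height of edge groups, separability, and the statement that the convex-cocompact subgroup $\bigcap_\alpha\Stab(\tilde e_\alpha)$ of $G_v$ lies in one of finitely many $\Stab(\tilde v)$-conjugacy classes --- a ``local finite stature'' fact which is classical when $G_v$ is hyperbolic and which I would establish for virtually special $G_v$ using the cube-complex structure --- pins the germ of $S$ at $\tilde v$ down to finitely many $\Stab(\tilde v)$-orbits. One then propagates the analysis outward: at a vertex $\tilde w$ adjacent to $\tilde v$ in $S$, the group $K$ still fixes the portion of $S$ beyond $\tilde w$ and lies in $\Stab(\tilde w)$, and the finite-height bounds force the relevant width to strictly decrease at each step, so after boundedly many steps either $S$ is contained in a bounded neighbourhood of $\tilde v$ (finitely many orbits, done) or $K$ has been forced down to an infinite virtually cyclic group.

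What remains --- and what I expect to be the main obstacle, in line with the warning in the introduction that pointwise stabilizers of infinite subtrees need not be cyclic --- is the case where $K=\operatorname{Fix}(S)$ is infinite virtually cyclic while $S=\operatorname{Fix}(K)$ is unbounded, meeting arbitrarily many vertex spaces. Writing $K$ as a finite extension of $\langle h\rangle$, the element $h$ lies in $\Stab(\tilde u)$ for every $\tilde u\in S$ and in $\Stab(\tilde e)$ for every edge $\tilde e$ of $S$, so $\langle h\rangle$ sits inside a chain of conjugates of edge groups whose successive terms intersect cyclonormally; the plan is to show that cyclonormality forces this chain to be coherent --- consecutive cyclic intersections admit a common root --- so that all of $S$ is reconstructed from the conjugacy class, inside a single vertex group, of a maximal root of $h$. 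Since maximal cyclic subgroups contained in a convex-cocompact subgroup of a virtually special group fall into finitely many conjugacy classes (using separability again), this yields the required finiteness; Theorem~\ref{main result} then follows at once via \cite{Huang--Wise2024fintiestature}. The difficulty is genuinely concentrated in this last step: cyclonormality controls only pairs of conjugates of one edge group at a time, and converting it into the global statement that an unbounded subtree with infinite pointwise stabilizer is determined by bounded combinatorial data is where the real work lies.
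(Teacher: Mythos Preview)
Your propagation step in the non-virtually-cyclic case has a genuine gap. You correctly note that cyclonormality forces $\Stab(\tilde e)\cap\Stab(\tilde e')$ to be cyclic when $\tilde e,\tilde e'$ are distinct lifts of the \emph{same} edge of $\Gamma$ at $\tilde v$, so the local degree of $S$ is bounded. But this says nothing about the diameter of $S$. Along a path in $S$ through $\tilde v_i$ with incoming edge $\tilde e_i$ and outgoing edge $\tilde e_{i+1}$ that project to \emph{different} edges of $\Gamma$, cyclonormality gives no constraint whatsoever on $\Stab(\tilde e_i)\cap\Stab(\tilde e_{i+1})$: the hypothesis only concerns $H\cap H^g$ for a single edge group $H$. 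So a non-cyclic $K$ can a priori fix arbitrarily long paths that alternate edge types, and your ``width strictly decreases'' claim is unsubstantiated. Finite height of $G_e$ in $G_v$ is a statement about intersections of conjugates of one edge group inside one vertex group; it does not control the nested intersection $\bigcap_i\Stab(\tilde e_i)$ along a path, which lives across many vertex groups. The virtually cyclic case you flag as ``the real work'' is also only a plan: the coherence-of-roots idea is suggestive but there is no mechanism offered for why an unbounded fixed subtree is determined by bounded data.

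The paper takes a route that avoids this dichotomy altogether. After reducing (Lemma~\ref{lemma: vefity finite stature on paths}, via Corollary~\ref{corollary: gates are bounded}) to finite paths $\rho$, the argument bounds $\diam\bigl((\tY_1^0\pf\tY_n^0)/\pstab(\rho)\bigr)$ uniformly. Crucially, cyclonormality is exploited not through pairwise intersections but through \emph{triple} intersections: for $g\in\Stab(\tY_k^1)\setminus\Stab(\tY_{k+1}^0)$ one has $\Stab(\tY_k^1)\cap\Stab(\tY_{k+1}^0)\cap\Stab(\tY_{k+1}^0)^{g^{-1}}$ cyclic \emph{regardless} of whether the two edges at $\tilde v_k$ lift the same edge of $\Gamma$ (this is ``almost cyclonormality'', Definition~\ref{definition: almost cyclonormal edges} and Remark~\ref{remark: triple intersection}). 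That cyclic group acts cocompactly on a quasiline, and Section~\ref{section: quasiline} develops a quantitative hyperplane/fellow-travelling argument which, combined with the unique root property in special groups (Lemma~\ref{lemma: quasi-lines with high bridge}), manufactures an element of $\pstab(\rho)$ translating a long geodesic over itself. There is no case split on whether $\pstab(\rho)$ is virtually cyclic; the quasilines are auxiliary objects produced from triple intersections, not the stabilizers themselves.
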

Compared with Theorem~\ref{main result}, this result only assumes that the vertex spaces are virtually special and does not require hyperbolicity. In fact, it was conjectured in \cite{Huang--Wise2024fintiestature} that the equivalence between finite stature and virtual specialness continues to hold when one assumes only that the vertex spaces are virtually special. In our setting, we therefore expect that Theorem~\ref{main result} remains valid under this weaker assumption.
\begin{conj}
Let $X$ be a compact cube complex that splits as a graph of virtually special cube complexes with cyclonormal edges, then $X$ is virtually special. 
\end{conj}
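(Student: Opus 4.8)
\textbf{Proof proposal (for Theorem~\ref{main theorem}).} The plan is to run everything through the action of $G := \pi_1 X$ on the Bass--Serre tree $T$ dual to the given splitting. Unwinding the definition of finite stature, a $G$-conjugate of a vertex group is the stabilizer of a vertex of $T$, an intersection of several such is the pointwise stabilizer $\mathrm{Fix}_G(S)$ of a subtree $S$, and intersecting once more with a vertex group amounts to adjoining a vertex to $S$; thus $G$ has finite stature with respect to the vertex groups exactly when, for every vertex $v$ of $T$, only finitely many $G_v$-conjugacy classes of infinite subgroups arise as $\mathrm{Fix}_G(S)$ with $v \in S$. (This is the tree-theoretic reformulation of finite stature from \cite{Huang--Wise2019statureseparabilitygraphsgroups}.) Since among all subtrees with a prescribed pointwise stabilizer there is a largest one, namely the fixed-point subtree of that stabilizer, we may restrict to subtrees of this form, which makes $S \mapsto \mathrm{Fix}_G(S)$ injective. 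Two cases are immediate: if $S = \{v\}$ then $\mathrm{Fix}_G(S) = G_v$, one class; and if $\mathrm{Fix}_G(S) = G_e$ for some edge $e$ of $S$ at $v$, then, $\Gamma$ being finite since $X$ is compact and $G_v$ acting transitively on the edges of $T$ above each edge of $\Gamma$, these contribute only finitely many classes.

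The heart of the matter is the remaining case, where $H := \mathrm{Fix}_G(S)$ is infinite but a \emph{proper} subgroup of $G_e$ for every edge $e$ of $S$. The plan is to use cyclonormality of the edge groups to force $H$ into a cyclic subgroup. Choose two distinct edges $e, e'$ of $S$ meeting at a vertex $u$ --- if $S$ has no branch point one instead takes two consecutive edges along the segment or line $S$, which still share a vertex --- so that $H \le G_e \cap G_{e'} \le G_u$. When $e$ and $e'$ lie above the same edge $\bar e$ of $\Gamma$, the edges of $T$ at $u$ above $\bar e$ form one $G_u$-orbit with point stabilizers the $G_u$-conjugates of $G_{\bar e}$, so $G_e \cap G_{e'}$ is a conjugate in $G_u$ of $G_{\bar e} \cap w G_{\bar e} w^{-1}$ with $w \notin G_{\bar e}$, hence cyclic or trivial by cyclonormality of $G_{\bar e}$ in $G_u$; the case of edges above distinct edges of $\Gamma$ is handled by the corresponding clause of Definition~\ref{definition: almost cyclonormal edges}. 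Either way $G_e \cap G_{e'}$ is cyclic, so $H$ --- being infinite --- is infinite cyclic; and since $H$ also fixes $v$ it lies in $G_v$, so $H$ is an infinite cyclic subgroup of $G_v$ arising as an intersection of conjugates of edge groups.

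It then remains to bound, up to $G_v$-conjugacy, the infinite cyclic subgroups of $G_v$ arising this way. Two features of the graph-of-cube-complexes structure combine here. First, the edge spaces include in the vertex spaces by local isometries, so each edge group is convex, hence (in a suitable finite-index special cover) root-closed, in each adjacent vertex group; this forces a closed cyclic pointwise stabilizer $H$ to coincide with the maximal cyclic subgroup of $G_v$ containing it, since that maximal cyclic subgroup is then contained in every $G_e$ with $e$ an edge of $S$ and hence in $\mathrm{Fix}_G(S) = H$, and so it prevents infinitely many distinct such $H$ from accumulating inside one maximal cyclic subgroup. Second, virtual specialness of the vertex and edge spaces supplies the separability needed to conclude that only finitely many $G_v$-conjugacy classes of maximal cyclic subgroups of $G_v$ occur as intersections of conjugates of edge groups, by feeding into the stature and separability machinery of \cite{Huang--Wise2019statureseparabilitygraphsgroups}. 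Summing over the finitely many vertices of $\Gamma$ then completes the argument.

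The step I expect to be the principal obstacle is exactly the control of these ``deep'' pointwise stabilizers in a setting that is neither hyperbolic nor acylindrical: unlike in Theorem~\ref{malnormal combination}, there is no global nonpositive curvature of $\pi_1 X$ to bound intersections, so all the bounding must be extracted from cyclonormality of the edge groups together with the separability furnished by virtual specialness. In particular, one must treat with care the intersections of edge groups attached to distinct edges of $\Gamma$ meeting at a common vertex --- where cyclonormality is not a direct hypothesis on a single subgroup --- and one must use the convexity of edge groups in an essential way to rule out the proliferation of finite-index cyclic sub-stabilizers nested in a single maximal cyclic subgroup; it is here that the full strength of the cyclonormality hypothesis and of virtual specialness of the vertex groups, rather than mere residual finiteness, is needed.
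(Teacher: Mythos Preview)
The statement you were given is a \emph{conjecture}: the paper does not prove it, and explicitly says so. What the paper does prove is Theorem~\ref{main theorem} (finite stature), and it deduces virtual specialness only under the extra hypothesis that vertex groups are hyperbolic, via \cite{Huang--Wise2024fintiestature}. Your proposal is labelled ``for Theorem~\ref{main theorem}'' and only argues finite stature; even if it succeeded, it would not establish the conjecture, because the step ``finite stature $\Rightarrow$ virtually special'' without hyperbolicity is precisely the open part.

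More seriously, your argument for finite stature contains a genuine gap at its central step. You claim that when $H = \mathrm{Fix}_G(S)$ is infinite and proper in every edge stabilizer, choosing two edges $e,e'$ of $S$ at a common vertex forces $G_e \cap G_{e'}$ to be cyclic. This is correct only when $e,e'$ lie over the \emph{same} edge of $\Gamma$. When they lie over different edges, cyclonormality says nothing: it is a hypothesis on $G_{\bar e} \cap wG_{\bar e}w^{-1}$ for a single edge group, not on intersections of distinct edge groups. Your appeal to Definition~\ref{definition: almost cyclonormal edges}(2) does not help either: that clause controls \emph{triple} intersections $\varphi_*(\pi_1X_e)\cap\psi_*(\pi_1X_f)^{g_1}\cap\psi_*(\pi_1X_f)^{g_2}$, not the pairwise intersection you need. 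In fact the paper warns in the introduction that ``despite the assumption of cyclonormal edges, the pointwise stabilizers of infinite subtrees in the Bass--Serre tree can be more complicated than cyclic groups.'' A concrete obstruction: take $\Gamma$ a segment $u$--$v$--$w$ with all vertex groups $F_3=\langle a,b,c\rangle$ and both edge groups the free factor $\langle a,b\rangle$; each edge group is malnormal (hence cyclonormal) in each adjacent vertex group, yet the pointwise stabilizer of any length-$2$ path through a lift of $v$ is the non-cyclic group $\langle a,b\rangle$.

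Because pointwise stabilizers need not be cyclic, the paper's actual proof of Theorem~\ref{main theorem} takes a completely different route. Rather than classifying the stabilizers algebraically, it bounds the diameter of the quotient complexes $(\tY_1^0\pf\tY_n^0)/\pstab(\rho)$ uniformly (equation~\eqref{equation: upper bounda of diameter in main thm}); finiteness of conjugacy classes then follows since there are only finitely many local isometries to $X_v$ from complexes of bounded diameter. The diameter bound is obtained through a delicate geometric argument: given a long geodesic in such a quotient, one manufactures a quasiline via the triple-intersection clause of almost cyclonormality (Step~2 of the proof), applies the fellow-travelling machinery of Section~\ref{section: quasiline} (Proposition~\ref{proposition}) to extract a periodic convex subcomplex, and then uses the unique-root property of special groups (Lemma~\ref{lemma: quasi-lines with high bridge}) to propagate the resulting element into $\pstab(\rho)$. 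None of this is visible from the tree alone; the convex geometry of the cube complexes is used in an essential way.
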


Theorem~\ref{main theorem} is a special case of more general results in Theorem~\ref{theorem: main theorem} and Corollary~\ref{corollary: general version of main thm} concerning almost cyclonormality.

When $X$ splits as a graph of graphs, Theorem~\ref{main theorem} can be deduced from Wise's argument in \cite[Part \RNum{1}, Theorem 4.5.4]{Wise1996NPCSquareComplexes} for cyclonormal VH-complexes.

\subsection*{Structure of the paper}
In section \ref{section: priliminary}, we provide some preliminaries about nonpositively curved cube complexes and virtually special cube complexes. 
Section \ref{section: projection} discusses the projection map in $\CAT(0)$ cube complexes.
In section \ref{section: stature}, we review the notation of finite stature, and discuss its relation with pointwise stabilizers on the Bass--Serre tree.
In Section \ref{section: quasiline}, we introduce the notation of quasilines and study hyperplanes of them. 
Finally, we prove Theorem \ref{main theorem} in Section \ref{section: main theorem}.

\subsection*{Acknowledgments}
The author would like to thank Jingyin Huang for his guidance on the topic and for many fruitful discussions.

\section{Preliminary} \label{section: priliminary}
In this section, we review definitions related to nonpositively curved cube complexes, hyperplanes, and special cube complexes. For example, see \cite{wise2021} for further details.
\subsection{Nonpositively curved cube complexes and hyperplanes}
\begin{defn}
An \textit{$n$-dimensional cube} is a copy of $I^n$ where $I = [0,1]$. Its subcubes are obtained by restricting some coordinates to $0$ and some to $1$. A \textit{cube complex} is obtained by gluing a collection of cubes along their subcubes isometrically. A \textit{flag complex} is a simplicial complex with the property that every finite set of pairwise adjacent vertices spans a simplex. A cube complex $X$ is \textit{nonpositively curved} if the link $\lk(v)$ of each $0$-cube (vertex) $v$ of $X$ is a flag complex. A nonpositively curved cube complex is \textit{$\CAT(0)$} if it is simply-connected.
\end{defn}

\begin{defn}
A \textit{midcube} in an $n$-cube $I^n$ is the subspace obtained by restricting one coordinate to $\frac{1}{2}$. A hyperplane $H$ in a $\CAT(0)$ cube complex $\tX$ is a connected subspace such that for each cube $C$ of $\tX$, either $H\cap C = \emptyset$ or $H\cap C$ consists of a midcube of $C$.
The \textit{carrier} of a hyperplane $H$ is the subcomplex $N(H)$ consisting of all closed cubes intersecting $H$. The \textit{open carrier} of $H$ is the subcomplex $N_o(H)$ consisting of open cubes intersecting $H$. 
An \textit{immersed hyperplane} in a nonpositively curved cube complex $X$ is the image of a map $H/\stab_{\pi_1X}(H) \to X$ where $H$ is a hyperplane in the universal cover $\tX$. 
\end{defn}
A hyperplane $H$ of a $\CAT(0)$ cube complex $\tX$ separates $\tX$ into two connected components \cite{Sageev1995}. The two components of $\tX\setminus N_o(H)$ are called \textit{halfspaces} of $H$, denoted by $H^+$, $H^-$. We use $H^\epsilon$ to denote a chosen halfspace of $H$, where $\epsilon\in \{+, -\}$.
\begin{defn}
Let $f:Y\to X$ be a combinatorial map of nonpositively curved cube complexes. We say $f$ is a \textit{local isometry} if for each $0$-cube $y$ of $Y$, the induced map of links $\lk(y)\to \lk(f(y))$ is an embedding and has full image (a subcopmlex $A$ of a simplicial complex $B$ is \textit{full} if each simplex of $B$ whose vertices are in $A$ is entirely contained in $A$).
\end{defn}

Suppose that $f: Y\to X$ is a local isometry of connected nonpositively curved cube complexes, then the map $\tilde{f}:\tY\to \tX$ between universal covers is an isometric embedding and $\tilde{f}(\tY)$ is a convex subcomplex of $\tX$. Moreover, $f$ is $\pi_1$-injective. See \cite[Lemma 2.11]{Haglund-Wise2008}. For a general discussion of complete nonpositively curved metric spaces, see \cite{BridsonHaefliger1999}. 
\begin{defn}
Let $f:Y\to X$ be a local isometry of connected nonpositively curved cube complexes. We say that $f$ \textit{represents} (the conjugacy class of) the subgroup $\pi_1Y \cong f_*(\pi_1Y)\leq \pi_1X$.
\end{defn}

\subsection{Combinatorial distance and translation length}
\begin{defn}
Let $p,q$ be two vertices of a $\CAT(0)$ cube complex $\tX$. A \textit{combinatorial path} of $\tX$ between $p$ and $q$ is a sequence $\gamma = (p = v_0,v_1,\dots, v_n = q)$ of vertices of $\tX$ such that for each $i = 1,\dots, n$, either $v_{i-1} = v_i$ or $v_{i-1}$, $v_i$ are endpoints of an edge of $\tX$. The \textit{length} of $\gamma$ is $n$. The \textit{combinatorial distance} $d(p,q)$ of two vertices $p,q$ is the smallest length $n$ of a combinatorial path between $p$ and $q$. A \textit{combinatorial geodesic} between $p,q$ is a combinatorial path between $p,q$ of length $d(p,q)$. 
A subcomplex $\tY\subset \tX$ is \textit{convex} if for any pair of vertices $p,q$, each combinatorial geodesic joining $p,q$ lies in $\tY$. The \textit{convex hull} $\Hull(S)$ of a subset $S\subset \tX$ is the smallest convex subcomplex of $\tX$ containing $S$. 
\end{defn}

We recall the following fact about combinatorial geodesics from \cite{Sageev1995}.
\begin{lem}
Let $\tX$ be a $\CAT(0)$ cube complex. 
A combinatorial path in $\tX$ is a geodesic if and only if it crosses distinct hyperplanes. The combinatorial distance between two vertices $p,q$ of $\tX$ is the number of hyperplanes separating $p$ and $q$.
\end{lem}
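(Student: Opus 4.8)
The plan is to reduce everything to the separation properties of hyperplanes established by Sageev, which I take as given from \cite{Sageev1995}: every hyperplane $H$ of $\tX$ is embedded and two-sided, separates $\tX$ into exactly two halfspaces $H^{+}$ and $H^{-}$, each nondegenerate edge is \emph{dual} to a unique hyperplane (the one through its midcube), and a nondegenerate edge has its endpoints in different halfspaces of $H$ if and only if it is dual to $H$. First I would fix terminology: given a combinatorial path $\gamma=(v_{0},\dots,v_{n})$ and a hyperplane $H$, say $\gamma$ \emph{crosses $H$ at step $i$} if $v_{i-1}\neq v_{i}$ and $[v_{i-1},v_{i}]$ is dual to $H$, and let $c_{H}(\gamma)$ be the number of such $i$; since every nondegenerate edge is dual to exactly one hyperplane, the length of $\gamma$ equals $\sum_{H}c_{H}(\gamma)$.

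Next I would record a parity observation: tracking which halfspace of $H$ the successive vertices $v_{0},v_{1},\dots$ lie in, the side switches precisely at the steps where $\gamma$ crosses $H$ and is otherwise constant, so $v_{0}$ and $v_{n}$ lie on opposite sides of $H$ if and only if $c_{H}(\gamma)$ is odd. In particular $c_{H}(\gamma)\geq 1$ whenever $H$ separates the endpoints, and hence the length of any combinatorial path from $p$ to $q$ is at least the number $N$ of hyperplanes separating $p$ and $q$; this gives $d(p,q)\geq N$.

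The heart of the argument is a no-backtracking lemma: if a combinatorial path $\gamma$ crosses some hyperplane $H$ at least twice, then $\gamma$ is not a geodesic. To prove it I would pick steps $i<j$ at which $\gamma$ crosses $H$ with $j-i$ minimal, so that no intermediate edge is dual to $H$, and then run an elementary exchange: a homotopy pushing the trapped subpath across $H$, using the product structure $N(H)\cong H\times[0,1]$ of the carrier to supply the required parallel vertices, yields a combinatorial path with the same endpoints that is shorter by two. It follows that a combinatorial geodesic crosses every hyperplane at most once, hence crosses \emph{distinct} hyperplanes; and by the parity observation the hyperplanes it crosses are exactly the $N$ separating its endpoints, so its length equals $N$, which also yields $d(p,q)=N$.

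Finally I would close the loop: if $\gamma$ crosses distinct hyperplanes then $c_{H}(\gamma)\in\{0,1\}$ for every $H$, so by the parity observation the set of hyperplanes crossed by $\gamma$ coincides with the set separating its endpoints $p$ and $q$; thus the length of $\gamma$ equals $N=d(p,q)$, so $\gamma$ is a geodesic. The only step that is not pure bookkeeping is the no-backtracking lemma, and the delicate point there is to check that the subpath trapped between two consecutive crossings of $H$ really stays inside the carrier $N(H)$, so that the product structure of $N(H)$ can be applied to perform the length-reducing exchange; the rest follows formally from the separation and duality properties quoted from \cite{Sageev1995}.
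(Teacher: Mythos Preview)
The paper does not give its own proof of this lemma; it is stated as a fact and attributed to \cite{Sageev1995}. So there is nothing to compare against on the paper's side, and the question is only whether your outline stands on its own.

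The bookkeeping (parity, the inequality $d(p,q)\geq N$, and the final paragraph deducing ``distinct crossings $\Rightarrow$ geodesic'') is correct. The gap is exactly where you flag it, and it is genuine: with your choice of $i<j$ (minimal among crossings of the \emph{fixed} hyperplane $H$), the trapped subpath $v_i,\dots,v_{j-1}$ need not lie in $N(H)$. Already in a tree this fails: cross an edge $e$, walk arbitrarily far into one component, return and cross $e$ again; the carrier of the midpoint of $e$ is just $e$ itself, and the trapped subpath leaves it at the first step. So the ``push across $H$ using $N(H)\cong H\times[0,1]$'' move is simply unavailable as you have set it up, and calling this point ``delicate'' understates the problem---the claim is false.

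There is a repair, but it changes both the order of the argument and the minimality hypothesis. First, prove ``distinct crossings $\Rightarrow$ geodesic'' \emph{before} the no-backtracking lemma: it follows directly from parity and $d(p,q)\geq N$, without ever invoking the converse. Second, in the no-backtracking lemma choose $(H,i,j)$ with $j-i$ minimal over \emph{all} hyperplanes crossed twice by $\gamma$, not just a fixed one. Then every hyperplane is crossed at most once on $v_i,\dots,v_{j-1}$, so by the direction just established this subpath is a geodesic. Its endpoints $v_i$ and $v_{j-1}$ lie in the convex subcomplex $N(H)\cap H^+$ (the frontier $H\times\{1\}$), so the whole geodesic is trapped there, and now the product structure of $N(H)$ genuinely supplies the parallel path on the $H^-$ side from $v_{i-1}$ to $v_j$, shortening $\gamma$ by two. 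Without the global minimality and the convexity of the frontier, your push-across step does not go through.
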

We review the following from \cite{Haglund2023}.
\begin{defn}
Let $\tX$ be a $\CAT(0)$ cube complex. An \textit{automorphism} $\phi$ of $\tX$ is an isometry of $\tX$ which sends every $n$-cube to an $n$-cube for each integer $n\geq 0$. 
The \textit{$\CAT(0)$ translation length} of $\phi$ is defined by $\delta(\phi) = \inf_{x\in \tX}d(x,\phi(x))$.
The \textit{combinatorial translation length} of $\phi$ is defined by $\lv \phi \rv = \inf_{v\in V(\tX)}d(v,\phi(v))$, where $V(\tX)$ denotes the set of vertices of $\tX$.
\end{defn}
\begin{rmk}\label{remark: elements are hyperbolic}
It follows from the definition that $|\phi|\geq \delta(\phi)$.
Let $X$ be a compact nonpositively curved cube complex. Then $\pi_1X$ acts properly and cocompactly by isometries on the universal cover $\tX$ of $X$, and every element of $\pi_1X$ is a semi-simple isometry (see \cite[\RNum{2}, Proposition 6.10]{BridsonHaefliger1999}). Because $\pi_1X$ is torsion-free, each nontrivial element $\phi\in \pi_1X$ is a hyperbolic isometry. In particular, $|\phi|\geq \delta(\phi)>0$.
\end{rmk}

\subsection{Special cube complexes}
We review the following from \cite{Haglund-Wise2008} and \cite{haglund-wise2012}.
\begin{defn}
A nonpositively curved cube complex $X$ is \textit{special} if every immersed hyperplane is embedded, 2-sided, and does not self-osculate, and no two hyperplanes inter-osculate.
It is \textit{virtually special} if it has a finite-sheeted covering space that is special.
A group is said to be \textit{special} if it is the fundamental group of a compact special cube complex.
\end{defn}

Let $f: A\to X$ be a local isometry of compact special cube complexes. The canonical completion and retraction construction, due to Haglund--Wise \cite[Proposition~6.5]{Haglund-Wise2008},
yields a finite-sheeted covering map $p: C(A,X)\to X$, together with an embedding $j: A\hookrightarrow C(A,X)$ and a cellular map $r: C(A,X)\to A$ such that $f = pj$ and $1_A = rj$. See also \cite[Definition~3.4]{haglund-wise2012} for a construction via the fiber product. It is also interpreted as a walker and imitator construction by  Shepherd \cite[Construction~3.1]{Shepherd2023}.

\subsection{Elevation}
We recall the following from \cite{haglund-wise2012}.
\begin{defn}
Let $f: Y\to X$ be a local isometry of connected nonpositively curved cube complexes and let $p:\hX\to X$ be a covering map. The fiber product $Y\otimes_X\hX$ is defined to be
\[
Y\otimes_X\hX = \{(a,b)\in Y\times \hX \mid f(a) = p(b)\}.
\]
An \textit{elevation} $\hY$ of $Y$ to $\hX$ is a connected component of $Y\otimes_X \hX$. In particular, we have the following commutative diagram.
\[
\begin{tikzcd}
\hY \arrow[r, "p_{\hX}"] \arrow[d, "p_Y"'] &
\widehat X \arrow[d, "p"] \\
Y \arrow[r, "f"'] &
X
\end{tikzcd}
\]
When $f:(Y,y)\to (X,x)$ and $\pi:(\hX,\hat{x})\to (X,x)$ are based maps with $f(y) = x = p(\hat{x})$, the \textit{based elevation} of $Y$ to $\hX$ is the component of $Y\otimes_{X}\hX$ containing the basepoint $(y,\hat{x})$.
\end{defn}
\begin{rmk}\label{remark: elevation and double cosets}
Identify $\pi_1Y$ and $\pi_1\hX$ with $f_*(\pi_1X)$ and $p_*(\pi_1\hX)$ respectively as subgroups of $\pi_1X$.
Then there is a one-to-one correspondence between elevations of $Y$ to $\hX$ and the double cosets $\pi_1\hX\backslash \pi_1X/ \pi_1Y$ (see \cite[Lemma~3.12]{ScottWall1979} or \cite[Lemma~2.6]{Wilton2008}). In particular, when $\hX$ is a regular cover of $X$, elevations correspond to cosets $\pi_1X/(\pi_1\hX\pi_1Y)$. 
Suppose that the based elevation $\hY$ embeds into $\hX$. Since $\hX$ is regular, all elevations embed into $\hX$. 
By identifying the based elevation with a subcomplex $\hY\subset \hX$, other elevations have the form $g\hY$ for $g\in \pi_1X$. Then under the action of $\pi_1X$ on $\hX$, the stabilizer of the elevation $g\tY$ is $\stab(g\hY) = \pi_1\hX\cdot g\pi_1Y{g^{-1}}$.
\end{rmk}

\section{Projection}\label{section: projection}
We recall the following from \cite[Appendix B]{Haglund-Wise2008}.
\begin{defn}
    Let $\tX$ be a $\CAT(0)$ cube complex and let $\tY\subset \tX$ be a convex subcomplex. The \textit{projection} from $\tX$ to $\tY$ is the combinatorial map $\Pi_{\tY}:\tX\to \tY$ sending each vertex $x\in\tX^0$ to the unique closest vertex in $\tY^0$.
\end{defn}

Let $X$ be a compact nonpositively curved cube complex. If subgroups $H,K\leq \pi_1X$ act cocompactly on convex subcomplexes $\tA,\tB$ of $\tX$ respectively, then $H\cap  K$ acts cocompactly on the convex subcomplex $\Pi_{\tA}\tB$ \cite[Lemma~2.8]{Fioravanti2023}. This indicates that the projection $\Pi_{\tA}\tB$ serves as an analogue of the intersection, even though $\tA\cap \tB$ may be empty. We thus denote $\Pi_{\tA}\tB$ by $\tA\pf \tB$ for brevity. Notice that this notation is not symmetric, since $\tA\pf \tB\subset \tA$, whereas $\tB\pf\tA\subset \tB$. 

The following theorem shows that there is an isometry $\tA \pf \tB \cong \tB \pf \tA$, which induces a bridge structure between convex subcomplexes. See \cite[Theorem 1.22]{Hagen2019} for a proof (see also \cite[Lemma 2.18]{ChatterjiFernosIozzi2016} and \cite[Theorem 5.2]{Shepherd2023}).
\begin{thm}[Bridge Theorem]\label{Thm: bridge theorem}
    Let $\tX$ be a $\CAT(0)$ cube complex and let $\tA,\tB\subset \tX$ be convex subcomplexes. Then the following hold:
    \begin{enumerate}
        \item A hyperplane $H$ crosses $\tA \pf \tB$ if and only if $H$ crosses both $\tA$ and $\tB$.
        A hyperplane $H$ separates $\tA \pf \tB, \tB\pf\tA$ if and only if $H$ separates $\tA,\tB$.
        \item The convex hull of $(\tA\pf\tB)\cup (\tB\pf\tA)$ is a convex subcomplex isomorphic to $C\times (\tA\pf\tB)$, where $C$ is the convex hull of a pair of vertices $a\in \tA\pf \tB$ and $b\in \tB\pf\tA$. In particular, $\tA\pf\tB$ and $\tB\pf\tA$ are isometric.
    \end{enumerate}
\end{thm}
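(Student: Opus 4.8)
The statement to be proved is the Bridge Theorem. The plan is to work entirely in the hyperplane combinatorics of the $\CAT(0)$ cube complex $\tX$, using the basic dictionary (Sageev): vertices correspond to consistent orientations of all hyperplanes, the distance between vertices is the number of separating hyperplanes, and convexity of a subcomplex $\tA$ is detected by closure under consistent orientation flips. Throughout I identify $\tA\pf\tB=\Pi_{\tA}\tB$ with the image of the projection of $\tB$ into $\tA$, and recall that $\Pi_{\tA}$ sends each vertex to the unique nearest vertex of $\tA$, realized by flipping exactly those hyperplanes that separate the vertex from $\tA$.

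\textbf{Step 1 (classification of hyperplanes relative to $\tA,\tB$).} Partition the hyperplanes of $\tX$ into four types relative to the ordered pair $(\tA,\tB)$: those crossing both $\tA$ and $\tB$; those crossing $\tA$ but not $\tB$; those crossing $\tB$ but not $\tA$; and those crossing neither. Among the ones crossing neither, distinguish the \emph{separating} ones — hyperplanes $H$ with $\tA\subset H^{\epsilon}$ and $\tB\subset H^{-\epsilon}$ — from those having both $\tA,\tB$ on the same side. The first assertion of part (1) is then the statement that $H$ crosses $\Pi_{\tA}\tB$ iff $H$ crosses both $\tA$ and $\tB$: for the ``if'' direction, pick $b,b'\in\tB$ on opposite sides of $H$ and check that their projections to $\tA$ stay on opposite sides, using that $H$ crosses $\tA$ so it is not flipped by $\Pi_{\tA}$; for ``only if'', if $H$ misses $\tA$ then it certainly misses the subcomplex $\Pi_{\tA}\tB\subset\tA$, and if $H$ crosses $\tA$ but misses $\tB$ then all of $\tB$ lies in one halfspace of $H$, hence so do all nearest-point projections, so $H$ does not cross $\Pi_{\tA}\tB$. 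The second assertion of part (1) — that $H$ separates $\Pi_{\tA}\tB$ from $\Pi_{\tB}\tA$ iff $H$ separates $\tA$ from $\tB$ — follows because the only hyperplanes that can separate a vertex $a\in\tA$ from its projected partner are exactly the hyperplanes of the separating type, and each such hyperplane genuinely separates the two projection complexes.

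\textbf{Step 2 (the product decomposition).} For part (2), let $a\in\Pi_{\tA}\tB$ and $b\in\Pi_{\tB}\tA$ be a ``bridge pair'' (e.g.\ $a$ and $b=\Pi_{\tB}(a)$, which one checks satisfies $a=\Pi_{\tA}(b)$), and let $C=\Hull(\{a,b\})$. Let $\mathcal{H}_{\mathrm{sep}}$ be the set of hyperplanes separating $\tA$ from $\tB$ and $\mathcal{H}_{\mathrm{both}}$ the set crossing both. By Step 1, $\mathcal{H}_{\mathrm{sep}}$ is exactly the set of hyperplanes crossing $C$, and $\mathcal{H}_{\mathrm{both}}$ is exactly the set crossing $\Pi_{\tA}\tB$. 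The key point is that these two families are ``transverse'': every hyperplane of $\mathcal{H}_{\mathrm{sep}}$ crosses every hyperplane of $\mathcal{H}_{\mathrm{both}}$ (a hyperplane crossing both $\tA$ and $\tB$ must cross any hyperplane separating $\tA$ from $\tB$, since on each side of the separating hyperplane it meets the respective convex set), and no hyperplane of either family crosses another hyperplane within its own family unnecessarily — rather the relevant structural fact is that $\Hull((\Pi_{\tA}\tB)\cup(\Pi_{\tB}\tA))$ has hyperplane set exactly $\mathcal{H}_{\mathrm{sep}}\sqcup\mathcal{H}_{\mathrm{both}}$ with $\mathcal{H}_{\mathrm{sep}}$ transverse to $\mathcal{H}_{\mathrm{both}}$. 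A convex subcomplex whose defining hyperplanes split into two mutually transverse families splits as a product of the two corresponding convex subcomplexes; here those factors are $C$ and $\Pi_{\tA}\tB$. Finally, projecting this product isomorphism onto the second factor identifies $\Pi_{\tA}\tB$ with $\Pi_{\tB}\tA$, giving the claimed isometry.

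\textbf{Main obstacle.} The routine parts are the hyperplane bookkeeping in Step 1; the genuinely delicate part is Step 2, specifically verifying that $\Hull((\Pi_{\tA}\tB)\cup(\Pi_{\tB}\tA))$ introduces \emph{no} hyperplanes beyond $\mathcal{H}_{\mathrm{sep}}\cup\mathcal{H}_{\mathrm{both}}$, and that these two families are fully transverse, so that the product structure theorem for convex subcomplexes applies cleanly. I expect this to require a careful argument that for any vertices $p\in\Pi_{\tA}\tB$, $q\in\Pi_{\tB}\tA$, every hyperplane separating $p$ from $q$ lies in one of the two families — equivalently that the Gromov-product-type ``corner'' vertex is well defined — together with the observation that flipping a subset of $\mathcal{H}_{\mathrm{sep}}$ and a subset of $\mathcal{H}_{\mathrm{both}}$ independently always yields a consistent (hence genuine) vertex of the hull, which is precisely where transversality is used. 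Since a complete proof of exactly this is available in \cite[Theorem~1.22]{Hagen2019}, I would either cite it directly or reproduce this transversality-and-consistency argument as the core of the proof.
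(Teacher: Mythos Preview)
The paper does not actually give a proof of this theorem: it simply cites \cite[Theorem~1.22]{Hagen2019} (and also \cite[Lemma~2.18]{ChatterjiFernosIozzi2016} and \cite[Theorem~5.2]{Shepherd2023}) and moves on. Your proposal correctly identifies this at the end, and the sketch you give --- classify hyperplanes by their relation to $\tA,\tB$, check that $\mathcal H_{\mathrm{sep}}$ and $\mathcal H_{\mathrm{both}}$ are mutually transverse, and invoke the product decomposition for convex subcomplexes whose hyperplanes split into two transverse families --- is precisely the standard argument behind those references. So your approach is consistent with (indeed, an unpacking of) what the paper cites, and your suggestion to ``cite it directly'' matches exactly what the paper does.
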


\begin{cor}\label{corollary: property of projection}
Let $\tA,\tB,\tC$ be convex subcomplexes of a $\CAT(0)$ cube complex $\tX$, then 
\begin{enumerate}
    \item $(\tA\pf \tB)\pf \tC = \tA\pf (\tB\pf \tC)$;
    \item $(\tA\pf \tB)\cap (\tA\pf \tC)\subset \tA\pf (\tB\pf \tC)$.
\end{enumerate}
\end{cor}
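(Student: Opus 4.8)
The plan is to deduce both statements from the Bridge Theorem by reasoning entirely in terms of which hyperplanes cross the relevant subcomplexes. Recall that in a $\CAT(0)$ cube complex a convex subcomplex is determined by the set of hyperplanes crossing it together with the halfspace data (equivalently, by its vertex set), so it suffices to identify the hyperplane sets on both sides; for (2) it suffices to check containment of vertex sets.

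For part (1), I would first record the key consequence of Theorem~\ref{Thm: bridge theorem}(1): a hyperplane $H$ crosses $\tA\pf\tB$ if and only if $H$ crosses both $\tA$ and $\tB$. Applying this twice, a hyperplane crosses $(\tA\pf\tB)\pf\tC$ iff it crosses $\tA\pf\tB$ and crosses $\tC$, iff it crosses $\tA$, $\tB$, and $\tC$; symmetrically it crosses $\tA\pf(\tB\pf\tC)$ iff it crosses $\tA$, $\tB$, and $\tC$. Hence the two subcomplexes cross exactly the same hyperplanes. Both are subcomplexes of $\tA$ (since $(\tA\pf\tB)\pf\tC \subset \tA\pf\tB\subset\tA$ and $\tA\pf(\tB\pf\tC)\subset\tA$), and a convex subcomplex of $\tA$ is determined inside $\tA$ by the hyperplanes of $\tA$ it crosses — here one should be slightly careful and argue via vertex sets: a vertex $v\in\tA$ lies in $\tA\pf\tB$ iff $v$ is separated from $\tB$ by no hyperplane that crosses $\tA$, equivalently $v=\Pi_\tA(w)$ for some $w\in\tB$; iterating this characterization gives that $v$ lies in $(\tA\pf\tB)\pf\tC$ iff $v$ is separated from neither $\tB$ nor $\tC$ by any hyperplane crossing $\tA$, which is manifestly symmetric in $\tB$ and $\tC$. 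This yields the equality in (1).

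For part (2), let $v\in(\tA\pf\tB)\cap(\tA\pf\tC)$; in particular $v\in\tA$. By the characterization above, $v\in\tA\pf\tB$ means no hyperplane crossing $\tA$ separates $v$ from $\tB$, and $v\in\tA\pf\tC$ means no hyperplane crossing $\tA$ separates $v$ from $\tC$. Now $\tB\pf\tC$ is a convex subcomplex of $\tB$, so any hyperplane crossing $\tA$ that separates $v$ from $\tB\pf\tC$ either separates $v$ from $\tB$ — impossible — or separates $\tB$ from $\tB\pf\tC$; but by Theorem~\ref{Thm: bridge theorem}(1) a hyperplane separating $\tB$ from $\tB\pf\tC = \tB\pf(\tB\pf\tC)$... more directly, a hyperplane that crosses $\tA$ and does not cross $\tB$ but "sees" $\tB\pf\tC$ on one side must separate $v$ from some vertex of $\tB\pf\tC\subset\tB$, hence (as it doesn't cross $\tB$) it separates $v$ from all of $\tB$, again impossible. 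Therefore no hyperplane crossing $\tA$ separates $v$ from $\tB\pf\tC$, so $v\in\tA\pf(\tB\pf\tC)$, proving the inclusion.

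I expect the main technical point to be the bookkeeping in the halfspace/vertex-set characterization of projections — namely the clean statement that for $v\in\tA$ one has $v\in\tA\pf\tB$ iff no hyperplane crossing $\tA$ separates $v$ from $\tB$ — which requires combining the definition of $\Pi_\tA$ (closest-point projection on vertices) with the Bridge Theorem; once that lemma is isolated, both parts are formal. A cleaner alternative for (1) is to invoke the product decomposition in Theorem~\ref{Thm: bridge theorem}(2) to identify $\tA\pf\tB$ and $\tB\pf\tA$ via an explicit isometry and track the projections through the bridge, but the hyperplane-set argument above is the shortest route and the one I would write up.
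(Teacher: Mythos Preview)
Your argument for (1) contains a genuine error in the ``iterating'' step. The characterization ``$v\in\tA\pf\tB$ iff no hyperplane crossing $\tA$ separates $v$ from $\tB$'' is correct, but applying it to $(\tA\pf\tB)\pf\tC$ gives: $v\in\tA\pf\tB$ \emph{and} no hyperplane crossing $\tA\pf\tB$ (i.e.\ crossing both $\tA$ and $\tB$) separates $v$ from $\tC$. You replaced ``crossing $\tA\pf\tB$'' by ``crossing $\tA$'', which is strictly stronger, and then claimed the resulting condition is symmetric in $\tB$ and $\tC$. But $(\tA\pf\tB)\pf\tC$ is \emph{not} symmetric in $\tB,\tC$: in $\tX=\bZ^2$ take $\tA=\{0\}\times\bZ$, $\tB=\{(0,0)\}$, $\tC=\{(1,1)\}$; then $(\tA\pf\tB)\pf\tC=\{(0,0)\}$ while $(\tA\pf\tC)\pf\tB=\{(0,1)\}$. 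In this same example $v=(0,0)$ lies in $(\tA\pf\tB)\pf\tC$ yet is separated from $\tC$ by the hyperplane $y=1/2$ crossing $\tA$, so your stated characterization is false. The paper instead proves (1) by showing directly that $\Pi_{\tA\pf\tB}=\Pi_{\tA}\circ\Pi_{\tB}$ on vertices, comparing $\Pi_{\tA\pf\tB}(x)$ and $\Pi_{\tA}(\Pi_{\tB}(x))$ across hyperplanes of $\tA\pf\tB$; this avoids any symmetry claim.

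Your argument for (2) is incomplete: you only treat the case where the offending hyperplane $H$ does \emph{not} cross $\tB$, and you never use the hypothesis $v\in\tA\pf\tC$. When $H$ crosses $\tA$ and $\tB$ but separates $v$ from $\tB\pf\tC$, you still need an argument --- e.g.\ if $H$ also crosses $\tC$ then it crosses $\tB\pf\tC$, contradiction; if $H$ does not cross $\tC$, one checks (via the bridge between $\tB\pf\tC$ and $\tC\pf\tB$) that $\tB\pf\tC$ lies on the same side of $H$ as $\tC$, so $H$ separates $v$ from $\tC$, contradicting $v\in\tA\pf\tC$. The paper's proof carries out essentially this case analysis, but phrased by exhibiting an explicit witness $b\in\tB\pf\tC$ with $\Pi_{\tA}(b)=v$.
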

\begin{proof}
For (1), it suffices to show $\Pi_{\tA\pf \tB} = \Pi_{\tA}\circ \Pi_{\tB}$. For each vertex $x\in \tX$, notice that both $\Pi_{\tA\pf \tB}(x)$  and $\Pi_{\tA}\circ \Pi_{\tB}(a)\in \Pi_{\tA}(\tB)=\tA\pf \tB$ are elements in $\tA\pf \tB$. So we just need to show that they are not separated by any hyperplane $H$ crossing $\tA\pf \tB$. 

Let $H$ be a hyperplane crossing $\tA\pf \tB$. Then it does not separate $x$ from $\tA\pf \tB$. Thus, by Bridge Theorem \ref{Thm: bridge theorem}, $x$ and its projection $\Pi_{\tA\pf \tB}(x)$ are in the same halfspace of $H$. 
Moreover, since $H$ crosses $\tA\pf \tB$, it crosses both $\tA$ and $\tB$. So it does not separate $x$ from $\Pi_{\tB}(x)$, and does not separate $\Pi_{\tB}(x)$ from $\Pi_{\tA}(\Pi_{\tB}(x))$.
As a result, the three points $x$, $\Pi_{\tA\pf \tB}(x)$, and $\Pi_{\tA}(\Pi_{\tB}(x))$ are in the same halfspace of $H$ for any hyperplane $H$ crossing $\tA\pf \tB$. This shows that $\Pi_{\tA\pf \tB}(x) = \Pi_{\tA}\circ \Pi_{\tB}(x)$ as desired for any vertex $x$.

Now, we prove (2). Note that the inclusion trivially holds if $(\tA\pf \tB)\cap (\tA\pf \tC)$ is empty. 
Suppose that there exists a vertex $a\in (\tA\pf \tB)\cap (\tA\pf \tC)$. Let $c = \Pi_{\tC}(a)\in \tC\pf \tA$ and $b = \Pi_{\tB}(c)\in \tB \pf \tC$. To show that $a\in \tA\pf (\tB\pf \tC)$, it suffices to prove that $a = \Pi_{\tA}(b)$, i.e. $a$ and $\Pi_{\tA}(b)$ are not separated by any hyperplane crossing $\tA$.

For each hyperplane $H$ crossing $\tA$, if $H$ does not cross $\tB$, then $H$ does not separate $\tA\pf \tB$ from $\tB$, and hence does not separate $a$ from $b$. If $H$ crosses $\tB$, then there are two cases:
\begin{itemize}
    \item If $H$ does not cross $\tC$, then $H$ does not separate $a\in \tA\pf\tC$ from $\tC$, and does not separate $b\in \tB\pf \tC$ from $\tC$. Thus, $a,b, \tC$ are in the same halfspace of $H$. 
    \item If $H$ crosses $\tC$, then $H$ does not separate $a$ from it projection $c = \Pi_{\tC}(a)$ to $\tC$. Similarly, since $H$ crosses $\tB$, it does not separate $b,c$. Hence, $a,b,c$ are in the same halfspace of $H$.
\end{itemize}
The above shows that $H$ does not separate $a,b$. Moreover, because $H$ crosses $\tA$, it does not separate $b$ from $\Pi_{\tA}(b)$. This concludes that $a,\Pi_{\tA}(b)$ are in the same halfspace of $H$ for each hyperplane $H$ crossing $\tA$.
\end{proof}

\begin{rmk}
Corollary \ref{corollary: property of projection}(1) also follows from \cite[Lemma~7.3.3]{Bowditch2024Median}, which deals with projections in the setting of median algebras. 

Note that Corollary \ref{corollary: property of projection}(1) means the operation $\pf$ is associative. Thus, we  denote $(\tA\pf \tB)\pf \tC = \tA\pf (\tB\pf \tC)$ simply by $\tA \pf \tB \pf \tC$ for brevity.
More generally, for a sequence $\tA_1,\dots, \tA_n$ of convex subcomplexes, we write 
\[
\pf_{i=1}^nA_i
= \tA_1\pf \tA_2 \pf \cdots \pf \tA_n
= \tA_1 \pf ( \tA_2 \pf ( \cdots  ( \pf \tA_{n-1}\pf \tA_n ) \cdots  ) ),
\]
which is a convex subcomplex of $A_1$.
\end{rmk}

\begin{rmk}\label{remark: gates correspond to intersection of subgroups}
Let $A\to X$ and $B\to X$ be local isometries of connected nonpositively curved cube complexes.
Denote by $\pi_1A^{g_1^{-1}}$ and $\pi_1B^{g_2^{-1}}$ the conjugate subgroups $g_1\cdot \pi_1A\cdot g_1^{-1}$ and $g_2\cdot \pi_1B\cdot g_2^{-1}$ for $g_1,g_2\in \pi_1X$.
Then the subgroup $\pi_1A^{g_1^{-1}}\cap \pi_1B^{g_2^{-1}}$ is represented by a local isometry $(g_1\tA\pf g_2\tB)/ \pi_1A^{g_1^{-1}}\cap \pi_1B^{g_2^{-1}}\to X$.

More precisely, let $\tA$ and $\tB$ be the based elevations of $A,B$ to the universal cover $\tX$. By Remark~\ref{remark: elevation and double cosets}, elevations of $A$ and $B$ to $\tX$ have the form $g_1\tA$ and $g_2\tB$ for $g_1,g_2\in \pi_1X$, with stabilizers $\stab(g_1\tA) = \pi_1A^{g_1^{-1}}$ and $\stab(g_2B) = \pi_1B^{g_2^{-1}}$. Thus, $\pi_1A^{g_1^{-1}} \cap \pi_1B^{g_2^{-1}}$ acts cocompactly on the projection $g_1\tA\pf g_2\tB$. The embedding $g_1\tA\pf g_2\tB\hookrightarrow \tX$ then induces a local isometry
\[
g_1\tA\pf g_2\tB/ \pi_1A^{g_1^{-1}} \cap \pi_1B^{g_2^{-1}}\to \tX/\pi_1X = X
\]
whose induced map of fundamental groups has image $\pi_1A^{g_1^{-1}} \cap \pi_1B^{g_2^{-1}}$.

More generally, given finitely many local isometries $A_1,\dots, A_n\to X$ of connected nonpositively curved cube complexes. Let $\tA_i$ be the based elevation of $A$ to $\tX$. Then for $g_1,\dots, g_n\in \pi_1X$, the subgroup $\pi_1A_1^{g_1^{-1}}\cap\cdots\cap \pi_1A_n^{g_n^{-1}}$ is represented by the local isometry 
\[
\pf_{i=1}^ng_iA_i
/ \pi_1A_1^{g_1^{-1}}\cap\cdots\cap \pi_1A_n^{g_n^{-1}} \to X.
\]
\end{rmk}

The following is proved in \cite[Proposition 3.11]{shepherd2025productseparabilityspecialcube}.
\begin{lem}\label{lemma: embedding of gates in special case}
Let $A,B$ be connected locally convex subcomplexes of a special cube complex $X$. Let $\tA, \tB$ be based elevations of $A,B$ to the universal cover $\tX$. For any $g_1,g_2\in \pi_1X$, we have
\[
\stab(g_1\tA)\cap \stab(g_2\tB) = \stab(g_1\tA\pf g_2\tB)
\]
and complex $(g_1\tA\pf g_2\tB)/\stab(g_1\tA\pf g_2\tB)$ embeds into $X$.
\end{lem}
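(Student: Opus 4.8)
The plan is to reduce both assertions to a single fact: that the local isometry
\[
j\colon (g_1\tA\pf g_2\tB)\big/\bigl(\pi_1A^{g_1^{-1}}\cap \pi_1B^{g_2^{-1}}\bigr)\longrightarrow X
\]
of Remark~\ref{remark: gates correspond to intersection of subgroups} is an embedding. One inclusion in the displayed equality is automatic: if $\gamma$ preserves both $g_1\tA$ and $g_2\tB$ then, by equivariance of the projection, it preserves $\Pi_{g_1\tA}(g_2\tB)=g_1\tA\pf g_2\tB$; together with the identification $\stab(g_i\,\cdot\,)=\pi_1(\,\cdot\,)^{g_i^{-1}}$ from Remark~\ref{remark: elevation and double cosets}, this gives
\[
\pi_1A^{g_1^{-1}}\cap \pi_1B^{g_2^{-1}}=\stab(g_1\tA)\cap\stab(g_2\tB)\subseteq\stab(g_1\tA\pf g_2\tB).
\]
For the reverse inclusion, assume $j$ is injective and let $\gamma\in\stab(g_1\tA\pf g_2\tB)$. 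Choose a vertex $v$ of $g_1\tA\pf g_2\tB$; then $\gamma v$ lies in the same complex and has the same image as $v$ under the covering $\tX\to X$, so injectivity of $j$ places $v$ and $\gamma v$ in one orbit of $\pi_1A^{g_1^{-1}}\cap \pi_1B^{g_2^{-1}}$. Since $\pi_1X$ acts freely on the vertices of $\tX$, this forces $\gamma\in\pi_1A^{g_1^{-1}}\cap \pi_1B^{g_2^{-1}}$; and then the quotient in the statement is exactly the domain of $j$. So everything reduces to showing that $j$ is an embedding.

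Conjugating by $g_1^{-1}$, which acts on $\tX$ covering the identity of $X$, we may take $g_1=1$; write $\tB'=g_2\tB$ (an elevation of $B$) and $P=\tA\pf\tB'\subseteq\tA$. As $A$ is locally convex, $\tA\hookrightarrow\tX$ realizes the universal cover of $A$ as a convex subcomplex with $\tA/\pi_1A=A$, so $\tA\to A\hookrightarrow X$ is the restriction of $\tX\to X$, and two vertices of $P$ have the same image in $X$ precisely when they lie in a common $\pi_1A$-orbit. Using this, together with $\gamma(\tA\pf\tB')=\tA\pf\gamma\tB'$ for $\gamma\in\pi_1A=\stab(\tA)$ and the fact that an element of $\pi_1A$ fixing a vertex of $\tX$ is trivial, one checks that $j$ is an embedding if and only if
\[
\gamma\in\pi_1A,\quad (\tA\pf\gamma\tB')\cap(\tA\pf\tB')\neq\emptyset\ \Longrightarrow\ \gamma\tB'=\tB';
\]
that is: two elevations of $B$ related by an element of $\pi_1A$ whose gates on $\tA$ intersect must coincide. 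This is the heart of the lemma and the point where specialness is indispensable: for non-special $X$ (for instance Wise's graph of graphs) distinct elevations of $B$ can share a gate on a lift of $A$, and the gate then fails to embed.

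To prove this last statement I would invoke the special structure through the canonical completion and retraction of Haglund--Wise, together with the bridge structure of Theorem~\ref{Thm: bridge theorem}. A convenient reduction: by the Haglund--Wise characterization of specialness one may precompose with a local isometry $X\to S_\Gamma$ into the Salvetti complex of a right-angled Artin group; identifying $\tX$ with its convex image in $\widetilde{S_\Gamma}$ leaves $\tA$, $\tB$, and hence $g_1\tA\pf g_2\tB$, unchanged, and $\stab_{\pi_1X}(\,\cdot\,)=\stab_{A_\Gamma}(\,\cdot\,)\cap\pi_1X$ for each of these convex subcomplexes, so both assertions for $X$ follow from those for $S_\Gamma$ (by intersecting the stabilizer equality with $\pi_1X$, and by the gate-disjointness reformulation for the embedding). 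One is thus reduced to $X=S_\Gamma$, where hyperplanes carry a global label by the generating set: the hyperplanes crossing $P=\tA\pf\tB'$ are exactly those crossing both $\tA$ and $\tB'$ (Theorem~\ref{Thm: bridge theorem}(1)), the bridge $C\times P$ joining $\tA$ to $\tB'$ descends to a combinatorial path in $S_\Gamma$ whose edge-labels record the hyperplanes separating $A$ from $B$, and the position of $\tB'$ relative to $\tA$ is reconstructed from this labelled data, which is what distinguishes distinct elevations of $B$. (Equivalently one can stay inside $X$: pass to a common regular finite cover of $C(A,X)$ and $C(B,X)$ in which $A$ and $B$ embed with retractions, realize the gate there as an honest subcomplex via the product decomposition of Theorem~\ref{Thm: bridge theorem}(2), and transport the conclusions back down along the retractions.) I expect the main obstacle to be precisely this step: controlling how the gate $g_1\tA\pf g_2\tB$ sits with respect to $\pi_1A^{g_1^{-1}}$ and $\pi_1B^{g_2^{-1}}$ at the same time, that is, ruling out elements of $\pi_1A^{g_1^{-1}}$ taking one elevation of $B$ to a distinct elevation with an overlapping gate.
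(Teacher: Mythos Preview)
The paper does not supply a proof here: it cites \cite[Proposition~3.11]{shepherd2025productseparabilityspecialcube} and remarks that Shepherd in fact only needs $X$ to be weakly special (no hyperplane self-intersects or self-osculates). So there is no in-paper argument to compare against, and your proposal must be assessed on its own.

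Your reductions are correct. The easy inclusion $\stab(g_1\tA)\cap\stab(g_2\tB)\subseteq\stab(g_1\tA\pf g_2\tB)$ via equivariance of $\Pi_{g_1\tA}$, the observation that both assertions follow from injectivity of $j$, the normalization $g_1=1$, and the gate-disjointness reformulation (for $\gamma\in\pi_1A$, if $(\tA\pf\gamma\tB')\cap(\tA\pf\tB')\neq\emptyset$ then $\gamma\tB'=\tB'$) are all valid and clarifying. One caveat on the passage to the Salvetti complex: you are not literally reducing to the same lemma with $X=S_\Gamma$, since $\tA/\stab_{A_\Gamma}(\tA)$ need not embed in $S_\Gamma$ (take $\tA$ a length-$2$ segment in the line covering a circle). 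What you are legitimately doing is transporting the gate-disjointness statement itself into $\widetilde{S_\Gamma}$, where it concerns only convex subcomplexes and an element of $A_\Gamma$; that is fine.

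The gap is exactly where you flag it. Your Salvetti-complex paragraph (``the position of $\tB'$ relative to $\tA$ is reconstructed from this labelled data'') and the parenthetical alternative via canonical completions are gestures, not arguments. You have correctly isolated the substantive content of the lemma---ruling out a $\gamma\in\pi_1A$ that carries $\tB'$ to a distinct elevation whose gate on $\tA$ overlaps that of $\tB'$---but you have not supplied the mechanism, and you say as much. This step is precisely what Shepherd's proof provides, using only the no-self-intersection and no-self-osculation of hyperplanes. As it stands, your proposal is an accurate outline that stops at the point where specialness must actually be used; to complete it you would need either to carry out the hyperplane-label argument in $\widetilde{S_\Gamma}$ in full, or to consult Shepherd's paper.
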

\begin{rmk}
The statement of \cite[Proposition 3.11]{shepherd2025productseparabilityspecialcube} only assumes $X$ to be weakly special in the sense that no hyperplane self-intersects or self-osculates. See \cite[Definition 2.12]{shepherd2025productseparabilityspecialcube} for different notions of specialness.
\end{rmk}

\begin{cor}\label{corollary: gates are bounded}
Let $A_1,\cdots, A_n\to X$ be local isometries of connected compact virtually special cube complexes. Let $\tA_i$ be the based elevation of $A_i$ to $\tX$ for each $i$. Then the following hold:
\begin{enumerate}
    \item There exists a constant $S$ depending on local isometries $A_1,\cdots, A_n\to X$, such that for any integer $m>0$ and elements
    $g_1,\dots, g_m\in \pi_1X$, we have
    \[
    \diam\left( \pf_{i=1}^{m}g_i\tA_{n_i}/\cap_{i=1}^m\pi_1A_{n_i}^{g_i^{-1}} \right)
    \leq S,
    \]
    where the $A_{n_i}$ are (not necessarily) distinct elements of $\{A_1,\dots, A_n\}$.
    \item Among all finite intersections of $\pi_1X$-conjugates of $\pi_1A_1,\dots, \pi_1A_n$, there are finitely many $\pi_1X$-conjugacy classes.
\end{enumerate}
\end{cor}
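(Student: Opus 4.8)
The plan is to reduce Corollary~\ref{corollary: gates are bounded} to the special case covered by Lemma~\ref{lemma: embedding of gates in special case} by passing to a finite-sheeted special cover, and then to deduce the two stated conclusions from a diameter bound on the quotient complexes.

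\medskip

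\noindent\emph{Reduction to the special case.}
First I would pass to a common finite-sheeted regular special cover. Since each $A_i$ is compact virtually special, each $\pi_1A_i$ is separable in $\pi_1X$ (or more directly, one uses that the local isometries $A_i\to X$ have finite-index subgroups detected by special covers); concretely, choose a finite-index normal subgroup $G'\leq\pi_1X$ so that the cover $X'\to X$ corresponding to $G'$ is special. For each $i$, the elevations of $A_i$ to $X'$ are finitely many (corresponding to double cosets $G'\backslash\pi_1X/\pi_1A_i$), and each elevation $A_i'$ is a connected locally convex subcomplex of $X'$ with $\pi_1A_i'=G'\cap(\text{a conjugate of }\pi_1A_i)$. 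The universal cover of $X'$ is still $\tX$, and a based elevation $\tA_i'$ of $A_i'$ to $\tX$ is a $\pi_1X$-translate of $\tA_i$.

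\medskip

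\noindent\emph{The diameter bound.}
Fix such finitely many locally convex subcomplexes $A_1',\dots,A_N'$ of the special cube complex $X'$. By Corollary~\ref{corollary: property of projection} the operation $\pf$ is associative, so $\pf_{i=1}^{m}g_i\tA_{n_i}$ is the iterated gate of convex subcomplexes; applying Lemma~\ref{lemma: embedding of gates in special case} repeatedly (pairing off one factor at a time, using that a gate of convex subcomplexes is again convex and that translates of the $\tA_i'$ are exactly the elevations) shows that $\bigl(\pf_{i=1}^m g_i\tA_{n_i}\bigr)\big/\bigl(\cap_{i=1}^m\pi_1A_{n_i}^{\prime\,g_i^{-1}}\bigr)$ embeds into $X'$ as a locally convex subcomplex, up to translating so the gates are taken inside $X'$'s universal cover. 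The key point: this quotient embeds in the \emph{compact} complex $X'$, so its diameter (equivalently, its number of vertices) is bounded by $\diam(X')$, independent of $m$ and of the $g_i$. Pulling back to $X$ via the finite cover $X'\to X$ changes diameters only by a bounded multiplicative/additive constant depending on $[\pi_1X:G']$, which gives the constant $S$ of part~(1). One subtlety to handle carefully: the associativity/iteration step must verify that at each stage the relevant stabilizer really is the full intersection $\cap_{i}\pi_1A_{n_i}^{g_i^{-1}}$ (not something larger), which is exactly the content of the equality $\stab(g_1\tA\pf g_2\tB)=\stab(g_1\tA)\cap\stab(g_2\tB)$ in Lemma~\ref{lemma: embedding of gates in special case}, inductively applied.

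\medskip

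\noindent\emph{Finiteness of conjugacy classes.}
For part~(2), by Remark~\ref{remark: gates correspond to intersection of subgroups} every finite intersection $C=\cap_{i=1}^m\pi_1A_{n_i}^{g_i^{-1}}$ is represented by the local isometry $\bigl(\pf_{i=1}^m g_i\tA_{n_i}\bigr)/C\to X$, whose domain $Q$ has diameter $\leq S$ by part~(1). There are only finitely many isomorphism types of locally convex subcomplexes of $X$ of diameter $\leq S$ (since $X$ is compact, and such a subcomplex lifts to a subcomplex of a fixed bounded ball in $\tX$), and for each such type the based point can be placed in finitely many ways; hence finitely many conjugacy classes of images $\pi_1Q\leq\pi_1X$ arise. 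Since $C$ is conjugate into $\pi_1X$ precisely as $\pi_1Q$ up to conjugacy (the local isometry is $\pi_1$-injective with image $C$), this gives finitely many $\pi_1X$-conjugacy classes of such $C$, as claimed.

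\medskip

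\noindent The step I expect to be the main obstacle is the iterated application of Lemma~\ref{lemma: embedding of gates in special case}: one must be careful that the gate $\pf_{i=1}^m g_i\tA_{n_i}$, though a single convex subcomplex, is the based elevation of a subcomplex of $X'$ with the \emph{correct} fundamental group $\cap_i\pi_1A_{n_i}^{\prime g_i^{-1}}$, and that taking gates with further translates continues to correspond to intersecting with further conjugates --- i.e., propagating the stabilizer identity and the embedding conclusion simultaneously through the induction. Once this bookkeeping is in place, the compactness of $X'$ (hence of $X$) does the rest essentially for free.
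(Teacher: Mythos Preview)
Your approach is essentially the same as the paper's: pass to a finite special cover, apply Lemma~\ref{lemma: embedding of gates in special case} inductively to conclude the iterated gate quotient embeds in the cover, then read off the diameter bound and finiteness from compactness. The inductive bookkeeping you flag as the ``main obstacle'' is exactly what the paper does, and your part~(2) argument matches the paper's.

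However, there is a genuine gap in your reduction step. You assert that ``each elevation $A_i'$ is a connected locally convex subcomplex of $X'$'', but merely passing to a special (even regular special) cover $X'$ does \emph{not} guarantee that the elevations of the $A_i$ embed; a priori they are only local isometries $A_i'\to X'$. Lemma~\ref{lemma: embedding of gates in special case} requires $A,B$ to be embedded locally convex subcomplexes, so without embedding you cannot start the induction. The paper fixes this by first taking a special cover $\bar X$, then for each $i$ forming the canonical completion $C(\bar A_i,\bar X)$ (in which $\bar A_i$ embeds by construction), and finally taking a common regular cover $\hat X$ factoring through all the $C(\bar A_i,\bar X)$; regularity then forces every elevation of $A_i$ to embed in $\hat X$. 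You need this step or an equivalent.

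A minor point: your ``pulling back to $X$ changes diameters by a bounded constant'' is vaguer than necessary. The paper simply observes that since $\pi_1\hat A_i\leq \pi_1 A_i$, there is a covering map from the quotient by $\cap_i\pi_1\hat A_{n_i}^{g_i^{-1}}$ onto the quotient by $\cap_i\pi_1 A_{n_i}^{g_i^{-1}}$, so the latter has diameter at most that of the former, which is at most $\diam(\hat X)$.
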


\begin{proof}
We first pass to a finite-sheeted special cover $\hX$ of $X$ such that all elevations of $A_i$ to $\hX$ embed. To do this, we first take a compact special cover $\bX$ of $X$, and let $\bA_i$ be the based elevation of $A_i$ to $\bX$ for each $i$. Then take the canonical completion $C(\bA_i,\bX)$ of the local isometry $\bA_i\to \bX$. Finally, choose a compact regular cover $\hX$ of $X$ that factors through each $C(\bA_i,\bX)$, and let $\hA_i$ be based elevation of $\bA_i$ to $\hX$, which is also the based elevation of $A_i$.
Because $\bA_i$ embeds into $C(\bA_i,\bX)$, $\hA_i$ embeds into $\hX$. By regularity of $\hX$, each elevation $g\hA_i$ of $A_i$ to $\hX$ embeds into $\hX$ for $g\in \pi_1X$.

We now prove that $\left(\pf_{i=1}^{m}g_i\tA_{n_i}\right)/\left(\cap_{i=1}^{m}\pi_1\hA_{n_i}^{g_i^{-1}}\right)$ embeds into $\hX$ by induction on $m$. When $m = 1$, the statement holds because each $g\tA_i/\pi_1\hA_i^{g^{-1}}$ is an elevation $g\hA_i$ of $A_i$ to $\hX$, and hence embeds into $\hX$ by our construction. Suppose the the statement holds for $m-1$. Then for any $g_1,\dots, g_m\in \pi_1X$ and (not necessarily distinct) complexes $A_{n_1},\dots, A_{n_{m}}$, both $g_1\tA_{n_1}/\pi_1\hA_{n_1}^{g_1^{-1}}$ and $\left(\pf_{i=2}^{m}g_i\tA_{n_i}\right)/\left(\cap_{i=2}^{m}\pi_1\hA_{n_i}^{g_i^{-1}}\right)$ embed into $\hX$. Notice that their based elevations to $\tX$ are $g_1\tA_{n_1}$ and $\pf_{i=2}^{m}g_i\tA_{n_i}$ respectively. Thus, by Lemma \ref{lemma: embedding of gates in special case}, the complex 
\[
g_1\tA_{n_1}\pf \left(\pf_{i=2}^{m}g_i\tA_{n_i}\right)/\pi_1\hA_{n_m}^{g_{m}^{-1}}\cap \left(\cap_{i=1}^{m}\pi_1\hA_{n_i}^{g_i^{-1}}\right)
= \left(\pf_{i=1}^{m}g_i\tA_{n_i}\right)/\left(\cap_{i=1}^{m}\pi_1\hA_{n_i}^{g_i^{-1}}\right)
\]
embeds into $\hX$ as well. This finishes the induction.

To prove (1), let $S = \diam(\hX)$. The above implies that 
\[
\diam\left( \pf_{i=1}^{m}g_i\tA_{n_i}/\cap_{i=1}^m\pi_1\hA_{n_i}^{g_i^{-1}} \right)
\leq \diam(\hX) = S.
\]
Because $\pi_1\hA_i$ is a subgroup of $\pi_1A_i$ for each $i$, there is covering map 
\[
\pf_{i=1}^{m}g_i\tA_{n_i}/\cap_{i=1}^m\pi_1\hA_{n_i}^{g_i^{-1}} 
\twoheadrightarrow\ \pf_{i=1}^{m}g_i\tA_{n_i}/\cap_{i=1}^m\pi_1A_{n_i}^{g_i^{-1}}.
\]
In particular, for any integer $m>0$ and elements $g_1,\dots, g_m\in \pi_1X$, we have
\[
\diam\left( \pf_{i=1}^{m}g_i\tA_{n_i}/\cap_{i=1}^m\pi_1A_{n_i}^{g_i^{-1}} \right)
\leq\diam\left( \pf_{i=1}^{m}g_i\tA_{n_i}/\cap_{i=1}^m\pi_1\hA_{n_i}^{g_i^{-1}} \right)
\leq S.
\]

For (2), by Remark \ref{remark: gates correspond to intersection of subgroups}, 
each finite intersection $\cap_{i=1}^m\pi_1A_{n_i}^{g_i^{-1}}$ is represented by the local isometry $\left(\pf_{i=1}^{m}g_i\tA_{n_i}\right)/\left(\cap_{i=1}^m\pi_1A_{n_i}^{g_i^{-1}}\right)\to X$.
Since $\left(\pf_{i=1}^{m}g_i\tA_{n_i}\right)/\left(\cap_{i=1}^m\pi_1A_{n_i}^{g_i^{-1}}\right)$ has uniformly bounded diameter, there are finitely many isomorphism classes of such complexes. Because $X$ is compact, each isomorphism class admits finitely many possible local isometries to $X$. Thus, the finitely many isomorphism classes of complexes admit finitely many possible local isometries to $X$,
which induce finitely many $\pi_1X$-conjugacy classes of subgroups of the form $\cap_{i=1}^m\pi_1A_{n_i}^{g_i^{-1}}$. This finishes the proof of (2).
\end{proof}

\begin{rmk}
Corollary \ref{corollary: gates are bounded} may fail if we drop the assumption of virtual specialness. 
For example, Wise constructed a VH-complex $X$ in \cite[Part \RNum{2}, 2.1]{Wise1996NPCSquareComplexes} that is not virtually special.
The VH-complex $X$ has a structure of graph of graphs with a local isometry $X_e\to X$ from the edge space $X_e$ to $X$.
As shown in \cite[Part \RNum{2}, Corollary 3.4b]{Wise1996NPCSquareComplexes}, there exists an element $t\in \pi_1X$ such that the subgroups $\left\{ \cap_{i=0}^n \pi_1X_e^{t^i}\right\}_{n\geq 0}$ are free groups of distinct ranks, and hence represent infinitely many conjugacy classes of subgroups of $\pi_1X$.
\end{rmk}

We end this section with the following related result that will be used in the proof of Lemma \ref{lemma: vefity finite stature on paths}.
\begin{lem}\label{lemma: convex-cocopmact not conjugate to proper subgroup}
Let $A\to X$ be a local isometry of connected compact nonpositively curved cube complexes. Then the subgroup $\pi_1A$ of $\pi_1X$ is not conjugate to proper subgroups of $\pi_1A$.
\end{lem}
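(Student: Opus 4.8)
The plan is to use the geometric description from Remark~\ref{remark: gates correspond to intersection of subgroups}, together with the fact that $\pi_1 A$ acts cocompactly on a convex subcomplex of $\tX$, to rule out the existence of a proper self-conjugation. Let $\tA$ be the based elevation of $A$ to the universal cover $\tX$, a convex subcomplex on which $H := \pi_1 A$ acts cocompactly with quotient $A$; set $D = \diam(A)$, which is finite since $A$ is compact. Suppose toward a contradiction that there is $g \in \pi_1 X$ with $g H g^{-1} \subsetneq H$.

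First I would translate this into a statement about the convex subcomplex $\tA$ and its translate $g\tA$. The subgroup $gHg^{-1} = \stab(g\tA)$ is contained in $H = \stab(\tA)$, so $H$ preserves $g\tA$; since $\tA$ is convex and $H$-cocompact, and $g\tA$ is $H$-invariant and convex, the projection $\tA \pf g\tA$ is $H$-invariant (being canonically determined by the pair $\tA, g\tA$, both of which $H$ preserves). Moreover by the Bridge Theorem~\ref{Thm: bridge theorem}, $\tA \pf g\tA$ is isometric to $g\tA \pf \tA$ and together they span a product region $C \times (\tA\pf g\tA)$. I would then iterate: applying $g$ repeatedly gives a nested chain $\tA \supseteq g\tA \supseteq g^2\tA \supseteq \cdots$ in the sense that $\stab(g^{k+1}\tA) \subsetneq \stab(g^k\tA)$, and I want to extract from the properness of the inclusion $gHg^{-1} \subsetneq H$ an element moved a definite amount.

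The cleanest route: since $H$ acts cocompactly on $\tA$, choose a vertex $v \in \tA$; then $\{h v : h \in H\}$ is $D$-dense in $\tA$, where $D = \diam(A)$. Now $g v \in g\tA$, and $\Pi_{\tA}(gv) \in \tA\pf g\tA \subseteq \tA$. If $gHg^{-1} = H$, then $g$ normalizes $H$ and one checks $g\tA$ is within bounded Hausdorff distance of $\tA$ (indeed $g\tA = \tA$ when $\tA$ is the full convex hull of an $H$-orbit, since $H$ preserves both). The point of properness is that $gHg^{-1} \subsetneq H$ forces $g\tA \subsetneq \tA$ strictly, hence (iterating) $g^k\tA$ has strictly decreasing... but $\tA/H$ is compact, so all the $g^k\tA$ have quotient of diameter $\le D$ — this does not yet yield a contradiction by cardinality. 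Instead I would count hyperplanes: by the Bridge Theorem, a hyperplane crosses $\tA\pf g\tA$ iff it crosses both $\tA$ and $g\tA$, and $\tA$ and $g\tA$ are separated by exactly the hyperplanes separating them; if $g\tA \subsetneq \tA$ is a proper convex subcomplex then $\tA$ has a hyperplane $Q$ crossing $\tA$ but not $g\tA$, and then $Q, gQ, g^2Q, \ldots$ are pairwise distinct hyperplanes crossing $\tA$. But $\tA/H$ is compact, so $\tA$ crosses only finitely many $H$-orbits of hyperplanes; since $g$ does not normalize... — here I would instead argue directly that $H$ acts transitively enough: $Q$ crosses $\tA$, so some $h \in H$ has $hQ$ a "standard" hyperplane meeting a fixed compact fundamental domain, and I would derive that the translates $g^k Q$ eventually force $g^k$ to conjugate $\stab(Q)\cap H$ outside $H$, contradicting $gHg^{-1}\subseteq H$.

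The main obstacle is making the last step rigorous without circularity: the naive "cardinality of $\tA/H$" argument fails because all quotients $g^k\tA/(g^kHg^{-k})$ have bounded diameter. The right fix, which I would pursue, is this: because $A$ is compact, $H = \pi_1 A$ is finitely generated; pick a finite generating set $T$ with $\ell = \max_{t\in T} d(v, tv)$. If $gHg^{-1} \subsetneq H$ pick $h_0 \in H \setminus gHg^{-1}$. For the conjugate $g^{-1} h_0 g \notin H$, so $g^{-1}h_0 g$ does not preserve $\tA$, meaning $d(v, g^{-1}h_0 g v)$ is... bounded in terms of how far $g^{-1}\tA$ sits. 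I think the intended clean proof is via the displacement function: $gHg^{-1} \subseteq H$ implies $g\tA$ is $H$-invariant, hence contains (as $H$-cocompact convex sub-thing) a copy on which $H$ still acts cocompactly, forcing the inclusion $g\tA \hookrightarrow \tA$ to induce a covering map $A' \to A$ of the compact quotients with $A' = g\tA/H$ homeomorphic to $A$ — but a covering map of a compact complex to itself of degree $>1$ is impossible when it is also $\pi_1$-surjective... Concretely: $g\tA/gHg^{-1} \cong \tA/H = A$ via the deck transformation $g$, and the inclusion $g\tA\subseteq \tA$ with $gHg^{-1}\subseteq H$ descends to a map $A \cong g\tA/gHg^{-1} \to \tA/H = A$ which is a covering map of degree $[H : gHg^{-1}]$; since $A$ is a finite complex, Euler characteristic (or cell-count) forces the degree to be $1$, i.e. $gHg^{-1} = H$. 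That is the argument I would write up, and the only care needed is checking the induced map is genuinely a covering map, which follows because $g\tA$ is a convex (hence locally convex) subcomplex and the inclusion $g\tA\hookrightarrow\tA$ is a local isometry equivariant for $gHg^{-1}\hookrightarrow H$.
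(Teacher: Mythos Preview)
Your final approach has the right flavor but contains a genuine gap. You assert ``the inclusion $g\tA\subseteq \tA$'' and then build the argument on it, but this inclusion is neither justified nor true in general: from $gHg^{-1}\subseteq H$ you only get that $gHg^{-1}$ preserves $\tA$ (since it lies in $H=\stab(\tA)$), not that $H$ preserves $g\tA$, and certainly not that $g\tA\subseteq\tA$. (Your earlier line ``so $H$ preserves $g\tA$'' has the direction reversed.) Even granting the inclusion, the induced map $g\tA/gHg^{-1}\to \tA/H$ is a local isometry, not a covering; the covering of degree $[H:gHg^{-1}]$ is $\tA/gHg^{-1}\to\tA/H$, and its total space need not be compact, so the cell-count argument does not apply.

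The paper repairs exactly this by two moves you are missing. First, it replaces $\tA$ by a \emph{minimal} $H$-invariant convex subcomplex $\tC\subseteq\tA$ (which exists by cocompactness). Second, instead of an inclusion it uses the projections $\tC\pf g\tC$ and $g\tC\pf\tC$, which always make sense. Minimality of $g\tC$ as a $gHg^{-1}$-invariant convex subcomplex forces $g\tC\pf\tC=g\tC$, and then the Bridge Theorem gives
\[
(\tC\pf g\tC)/gHg^{-1}\ \cong\ (g\tC\pf\tC)/gHg^{-1}\ =\ g\tC/gHg^{-1}\ \cong\ \tC/H.
\]
Now the local isometry $(\tC\pf g\tC)/gHg^{-1}\to \tC/H$ is between \emph{isomorphic} compact nonpositively curved cube complexes, hence an isomorphism by \cite[Lemma~6.3]{Wise2004Sectional}; factoring it through the genuine covering $\tC/gHg^{-1}\to\tC/H$ forces $gHg^{-1}=H$. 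Your cell-count intuition is morally the endgame here, but the substance of the proof is producing that isomorphism of quotients, and this requires the minimal-subcomplex trick together with projections rather than an unjustified containment $g\tA\subseteq\tA$.
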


\begin{proof}
Let $\tA$ be the based elevation of $A$ to the universal cover $\tX$. Since $\pi_1A$ acts cocompactly on $\tA$, it acts cocompactly on a minimal $\pi_1A$-invariant convex subcomplex $\tC\subset \tA$ \cite[Lemma 3.7]{HuangJankiewiczPrzytycki2016Cocompactly}. Then for any $g\in \pi_1X$, $g\tC\subset g\tA$ is a minimal $\pi_1A^{g^{-1}}$-invariant convex subcomplex.

Suppose $\pi_1A^{g^{-1}}$ is a subgroup of $\pi_1A$ for some $g\in \pi_1X$. Then $\pi_1A^{g^{-1}} = \pi_1A\cap \pi_1A^{g^{-1}}$ acts cocompactly on the two projections $\tC\pf g\tC$ and $g\tC\pf \tC$. For the first projection $\tC\pf g\tC$, the inclusion $\tC\pf g\tC\hookrightarrow \tC$ induces a local isometry
\[
f: \tC\pf g\tC/ \pi_1A\cap \pi_1A^{g^{-1}} \to \tC/ \pi_1A,
\]
which factors through the covering map $p: \tC/\pi_1A\cap \pi_1A^{g^{-1}}\to \tC/ \pi_1A$. Thus, we obtain the following commutative diagram.
\[\begin{tikzcd}
	& {\tC/\pi_1A\cap \pi_1A^{g^{-1}}} \\
	{\tC\pf g\tC/ \pi_1A\cap \pi_1A^{g^{-1}}} & {\tC/ \pi_1A}
	\arrow["p", two heads, from=1-2, to=2-2]
	\arrow["i", hook, from=2-1, to=1-2]
	\arrow["f"', from=2-1, to=2-2]
\end{tikzcd}\]
For the second projection $g\tC\pf \tC$, since it is a convex subcomplex of $g\tC$, the minimality of $g\tC$ as a $\pi_1A^{g^{-1}}$-invariant convex subcomplex forces $g\tC = g\tC\pf \tC$. 
Hence, $g\tC\pf \tC/ \pi_1A\cap \pi_1A^{g^{-1}} = g\tC/\pi_1A^{g^{-1}}$.
The Bridge Theorem \ref{Thm: bridge theorem} and the isomorphism $\tC/\pi_1A \cong g\tC/\pi_1A^{g^{-1}}$ then give us
\[
\tC\pf g\tC/ \pi_1A\cap \pi_1A^{g^{-1}}
\cong g\tC\pf \tC/ \pi_1A\cap \pi_1A^{g^{-1}}
= g\tC/\pi_1A^{g^{-1}}
\cong \tC/\pi_1A.
\]
This implies that $f$ is a local isometry between isomorphic compact nonpositively curved cube complexes. Hence, $f$ must be an isomorphism \cite[Lemma 6.3]{Wise2004Sectional}. As a consequence, in the commutative diagram above, the composition $p\circ i \circ f^{-1} = 1$ is the identify map of $\tC/\pi_1A$. In particular, $p$ is $\pi_1$-surjective and $\pi_1A = \pi_1A\cap \pi_1A^{g^{-1}}$. This proves that the subgroup $\pi_1A^{g^{-1}}\leq \pi_1A$ cannot be proper.
\end{proof}

\section{Stature}\label{section: stature}
\subsection{Graph of cube complexes}\label{section: graph of cube complexes}
In this section, we review concepts related to graph of groups. See \cite{Serre1980} or \cite{ScottWall1979}.
\begin{defn}
Let $\Gamma$ be a graph with vertices $V(\Gamma)$ and edges $E(\Gamma)$.
A \textit{graph of nonpositively curved cube complexes} $X_\Gamma$ over $\Gamma$ consists of 
\begin{itemize}
    \item a vertex space $X_v$ that is a connected nonpositively curved cube complex for each vertex $v\in V(\Gamma)$;
    \item an edge space $X_e$ that is a connected nonpositively curved cube complex for each edge $e\in E(\Gamma)$;
    \item attaching maps $\varphi_{e}^-: X_e\to X_{\iota(e)}$ and $\varphi_e^+: X_e\to X_{\tau(e)}$, which are local isometries for each edge $e$ and its two endpoints $\iota(e), \tau(e)$.
\end{itemize}
The \textit{total space} of $X_\Gamma$ is defined to be the quotient space 
\[
\left( \bigsqcup_{v\in V(\Gamma)} X_v \right) \sqcup 
\left( \bigsqcup_{e\in E(\Gamma)} X_e\times I \right)/ \sim
\]
where $X_e\times \left\{0\right\} \sim \varphi_e^-(X_e)$ and $X_e\times \left\{1\right\} \sim \varphi_e^+(X_e)$. A cube complex $X$ is said to \textit{split as a graph of nonpositively curved cube complexes} if $X$ is the total space of a graph of nonpositively curved cube complexes.
\end{defn}

\begin{rmk}\label{remark: simplicial graph}
Notice that when the two endpoints $\iota(e)$ and $\tau(e)$ of an edge $e$ coincide, the attaching maps $\varphi_e^{-}$ and $\varphi_e^{+}$ correspond to the same edge space $X_e$ and the same vertex space $X_{\iota(e)} = X_{\tau(e)}$. This issue does not arise when the underlying graph $\Gamma$ is simplicial. For a general graph of nonpositively curved cube complexes, we may make the underlying graph simplicial by subdividing each loop edge. In the rest of the paper, we assume that the graph of nonpositively curved cube complexes $X_\Gamma$ is defined over a simplicial graph.
\end{rmk}

A graph of nonpositively curved cube complex $X_\Gamma$ has an associated \textit{graph of groups} $G_\Gamma$ consisting of a vertex group $\pi_1X_v$ and an edge group $\pi_1X_e$ for each vertex $v\in V(\Gamma)$ and edge $e\in E(\Gamma)$. The attaching maps $\varphi_e^-$ and $\varphi_e^+$ induce inclusions $(\varphi_e^-)_*:\pi_1X_e\hookrightarrow \pi_1X_{\iota(e)}$ and $(\varphi_e^+)_*: \pi_1X_e \hookrightarrow \pi_1X_{\tau(e)}$ for each edge $e\in E(\Gamma)$. 

Suppose $X$ splits as a graph of nonpositively curved cube complexes $X_\Gamma$. Then the universal cover $\tX$ is a union of copies of $\tX_v$ and $\tX_e\times I$ for each vertex $v\in V(\Gamma)$ and edge $e\in E(\Gamma)$, where $\tX_v, \tX_e$ are universal covers of the vertex space $X_v$ and the edge space $X_e$. The \textit{Bass--Serre tree} $\cT$ associated to $X_\Gamma$ is obtained from $\tX$ by identifying each copy of $\tX_v$ to a point and each copy of $\tX_e\times I$ to a copy of $I$. The action of $\pi_1X$ on $\tX$ then induces an action on $\cT$. For each vertex $\tv$ and edge $\te$, their stabilizers $\stab(\tv)$ and $\stab(\te)$ are conjugates of $\pi_1X_v$ and $\pi_1X_e$ in $\pi_1X$. In general, let $\pstab$ denote the pointwise stabilizer for the action of $\pi_1 X$ on $\cT$. For each finite path $\rho \subset \cT$, the group $\pstab(\rho)$ is represented by a local isometry in the following sense.
\begin{lem}\label{lemma: represent path stabilizer}
Suppose $X$ is a cube complex that splits as a graph of nonpositively curved cube complexes $X_\Gamma$ with the associated Bass--Serre tree $\cT$. Let $\tv$ be a lift of a vertex $v\in V(\Gamma)$ to $\cT$ such that $\stab(\tv) = \pi_1X_v$. Then for each finite path $\rho\subset \cT$ starting from $\tv$, the subgroup $\pstab(\rho)$ acts cocompactly on a convex subcomplex $\tY\subset \tX_v$
and is represented by a local isometry $\tY/\pstab(\rho)\to X_v$.
\end{lem}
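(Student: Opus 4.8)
The plan is to argue by induction on the combinatorial length of $\rho$, using the fact that the pointwise stabilizer of a path is the intersection of the vertex (and edge) stabilizers along it, and that such intersections are realized by iterated projections via Remark~\ref{remark: gates correspond to intersection of subgroups}.

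First I would set up the base case. Write $\rho = (\tv = \tv_0, \te_1, \tv_1, \te_2, \dots, \te_n, \tv_n)$ as an alternating sequence of vertices and edges of $\cT$. When $n = 0$, the path is just $\tv$ and $\pstab(\rho) = \stab(\tv) = \pi_1 X_v$, which acts cocompactly on $\tY = \tX_v$ itself, and is represented by the identity map $X_v \to X_v$. For the inductive setup it is cleaner to think of the path as determined by the sequence of edge spaces it traverses: each $\te_i$ lifts the attaching map data $\varphi_{e_i}^{\pm}$, so $\stab(\te_i)$ is a conjugate (inside $\stab(\tv_{i-1}) \cap \stab(\tv_i)$, appropriately) of $\pi_1 X_{e_i}$, and by the local-isometry hypothesis this conjugate is represented by an elevation of the edge space $X_{e_i}$ to $X_v$ after conjugating everything into $\pi_1 X_v$. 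Concretely, pulling all the relevant subgroups back into $\pi_1 X_v = \stab(\tv)$, the group $\pstab(\rho)$ equals $\bigcap_{i=1}^n g_i\, \pi_1 X_{e_i}^{(i)}\, g_i^{-1}$ for suitable $g_i \in \pi_1 X_v$, where $\pi_1 X_{e_i}^{(i)} \le \pi_1 X_v$ denotes the image of $\pi_1 X_{e_i}$ under the composite of attaching maps and change-of-vertex identifications corresponding to the initial segment of $\rho$ up to $\te_i$. Each edge space $X_{e_i}$ (via this composite of attaching maps, which are local isometries) gives a local isometry $A_i \to X_v$ of connected compact nonpositively curved cube complexes.

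Next I would invoke the projection machinery. Let $\tX = \tX_v$, let $\tA_i$ be the based elevation of $A_i$ to $\tX_v$, so that $\stab(g_i \tA_i) = g_i\, \pi_1 X_{e_i}^{(i)}\, g_i^{-1}$ by Remark~\ref{remark: elevation and double cosets}. By Remark~\ref{remark: gates correspond to intersection of subgroups} applied to the finitely many local isometries $A_1, \dots, A_n \to X_v$, the intersection $\bigcap_{i=1}^n g_i\, \pi_1 X_{e_i}^{(i)}\, g_i^{-1} = \pstab(\rho)$ acts cocompactly on the iterated projection $\tY := \pf_{i=1}^n g_i \tA_i$, which is a convex subcomplex of $\tA_1 \subset \tX_v$, and is represented by the induced local isometry $\tY/\pstab(\rho) \to \tX_v/\pi_1 X_v = X_v$. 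This is exactly the conclusion. The induction on $n$ is really just bookkeeping: it is used to verify that the composite attaching maps do land everything inside $\pi_1 X_v$ and that the $g_i$ can be chosen in $\pi_1 X_v$; the projection statement itself handles an arbitrary finite intersection at once. The pointwise stabilizer of $\rho$ coincides with the pointwise stabilizer of just its vertices, hence with the intersection of the edge stabilizers that are ``interior'' constraints — but including the vertex stabilizers costs nothing since $\pstab(\rho) \le \stab(\tv) = \pi_1 X_v$ is automatic from $\tv \in \rho$.

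The main obstacle I anticipate is the identification $\pstab(\rho) = \bigcap g_i\, \pi_1 X_{e_i}^{(i)}\, g_i^{-1}$ with all conjugators lying in $\pi_1 X_v$ and with each $\pi_1 X_{e_i}^{(i)}$ correctly expressed as $f_*(\pi_1 A_i)$ for an honest local isometry $A_i \to X_v$ built from the attaching maps $\varphi_{e_j}^{\pm}$. This requires carefully tracking how the Bass--Serre tree edge stabilizers sit inside adjacent vertex stabilizers, and using that a composition of local isometries, post-composed with deck transformations, still represents the appropriate conjugate subgroup; once this translation from the tree-theoretic side to the cube-complex side is in place, the geometric content is entirely supplied by Remark~\ref{remark: gates correspond to intersection of subgroups} and the associativity of $\pf$ from Corollary~\ref{corollary: property of projection}.
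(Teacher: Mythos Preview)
Your proposal has a genuine gap in the setup. You want to express $\pstab(\rho)$ as an intersection $\bigcap_i g_i\,\pi_1 X_{e_i}^{(i)}\,g_i^{-1}$ with each factor represented by a local isometry $A_i \to X_v$, so that Remark~\ref{remark: gates correspond to intersection of subgroups} applies inside $\tX_v$. But for $i>1$ the edge stabilizer $\stab(\te_i)$ is a subgroup of $\stab(\tv_{i-1})$ and $\stab(\tv_i)$, \emph{not} of $\stab(\tv)=\pi_1 X_v$; there is no ``composite of attaching maps'' $X_{e_i}\to X_v$, because attaching maps only go from an edge space to its two adjacent vertex spaces and there is no map $X_{v_{i-1}}\to X_v$ to compose with. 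So the objects $A_i\to X_v$ you need simply do not exist in general, and the conjugators $g_i$ cannot be taken in $\pi_1 X_v$. You yourself flag this as the ``main obstacle,'' but it is not a bookkeeping issue: it is the whole content of the lemma.

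The paper sidesteps this by working in the universal cover $\tX$ of the \emph{total} space $X$ rather than in $\tX_v$. There each edge $\te_i$ corresponds to a product $\tY_i\times[0,1]\subset\tX$, with $\tY_i^0\subset\tX_{i-1}$ a convex subcomplex of $\tX$ and $\stab(\tY_i^0)=\stab(\te_i)$ as subgroups of $\pi_1X$. The intersection $\pstab(\rho)=\bigcap_i\stab(\tY_i^0)$ then acts cocompactly on $\tY_1^0\pf\cdots\pf\tY_n^0$, and because $\tY_i\times I$ separates, this iterated projection collapses to $\tY_1^0\pf\tY_n^0\subset\tY_1^0\subset\tX_0$. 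Only at this last step does one observe that the result lies in the copy of $\tX_v$ and descend to a local isometry into $X_v$. An inductive variant of your idea could be made to work---at each step passing through the bridge between $\tX_{i-1}$ and $\tX_i$---but that is substantially more than what you wrote, and the paper's direct argument in $\tX$ is both shorter and avoids the translation problem entirely.
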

\begin{proof}
The path $\rho:[0,n]\to \cT$ in the Bass--Serre tree starting from $\rho(0) = \tv$ corresponds to a subspace in the universal cover $\tX$ given by
\[
\left(\bigsqcup_{i=0}^n\tX_i\right) \sqcup \left( \bigsqcup_{j=1}^n \tY_i\times [0,1] \right)/\sim,
\]
where $X_0 = X_v, X_1,\dots, X_n$ and $Y_1,\dots, Y_n$ are (not necessarily distinct) vertex spaces and edge spaces.
For each $i$, $\tY_i\times \{0\}$ is attached to an elevation $\tY_i^0\subset \tX_{i-1}$ of $Y_i$ to $\tX_{i-1}$, and $\tY_i\times \{1\}$ is attached to an elevation $\tY_i^1\subset \tX_i$ of $Y_i$ to $\tX_i$. In particular, since each $\tY_i\times I$ separates other edge spaces, we have
\[
\tY_1^0 \pf \tY_n^0 
= \tY_1^0\pf \tY_2^0\pf \cdots \pf \tY_n^0.
\]
By considering the action of $\pi_1X$ on $\cT$ and $\tX$ respectively, we have
\[
\stab(\rho(i)) = \stab(\tX_i),\quad
\stab(\rho[i-1,i]) = \stab(\tY_i\times I) = \stab(\tY_i^0) = \stab(\tY_i^1).
\]
Then
\[
\pstab(\rho)
= \bigcap_{i=1}^n \stab(\rho[i-1,i])
= \bigcap_{i=1}^n \stab(\tY_i\times I)
= \bigcap_{i=1}^n \stab(\tY_i^0).
\]
Since each $\stab(\tY_i^0)$ acts cocopmactly on $\tY_i^0$, the pointwise stabilizer $\pstab(\rho)$ acts cocompactly on the projection 
\[
\tY_1^0\pf \tY_n^0
= \tY_1^0\pf \tY_2^0\pf \cdots \pf \tY_n^0.
\]
Then, by Remark \ref{remark: gates correspond to intersection of subgroups}, the subgroup $\pstab(\rho)\leq \stab(\tv)$ is represented by the local isometry 
\[
\tY_1^0\pf \tY_n^0/ \pstab(\rho) \to \tX_0/\stab(X_0) \cong X_0 = X_v.
\]
This proves the statement with $\tY = \tY_1^0\pf \tY_n^0$.
\end{proof}

\begin{cor}\label{corollary: represent tree stabilizer}
Suppose $X$ is a cube complex that splits as a graph of nonpositively curved cube complexes $X_\Gamma$ with the associated Bass--Serre tree $\cT$. Let $\tv$ be a lift of a vertex $v\in V(\Gamma)$ to $\cT$ such that $\stab(\tv) = \pi_1X_v$. Then for each finite subtree $T\subset \cT$ containing $\tv$, the subgroup $\pstab(T)$ is represented by a local isometry $\tY/\pstab(T)\to X_v$ for some convex subcomplex $\tY\subset \tX_v$.
\end{cor}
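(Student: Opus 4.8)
The plan is to reduce Corollary~\ref{corollary: represent tree stabilizer} to Lemma~\ref{lemma: represent path stabilizer} by covering the finite subtree $T$ with finitely many geodesic paths issuing from $\tv$, and then assembling the resulting local isometries by means of the gate-projection construction of Remark~\ref{remark: gates correspond to intersection of subgroups}.

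If $T = \{\tv\}$, then $\pstab(T) = \stab(\tv) = \pi_1X_v$ is represented by the identity map $X_v \to X_v$ with $\tY = \tX_v$, so I would assume $T$ contains at least one edge. Let $w_1,\dots,w_k$ be the leaves of $T$, and for each $j$ let $\rho_j \subset \cT$ be the geodesic path in the tree $T$ from $\tv$ to $w_j$ (a single point when $w_j = \tv$). Since $T$ is a finite tree containing $\tv$, every vertex and every edge of $T$ lies on some $\rho_j$, so $T = \bigcup_{j=1}^k \rho_j$; consequently an element of $\pi_1X$ fixes $T$ pointwise if and only if it fixes each $\rho_j$ pointwise, whence
\[
\pstab(T) = \bigcap_{j=1}^k \pstab(\rho_j).
\]

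Each $\rho_j$ is a finite path starting at $\tv$, so Lemma~\ref{lemma: represent path stabilizer} applies and produces a convex subcomplex $\tY_j \subset \tX_v$ on which $\pstab(\rho_j)$ acts cocompactly, together with a local isometry $A_j := \tY_j/\pstab(\rho_j) \to X_v$ representing $\pstab(\rho_j)\le \pi_1X_v$; here $A_j$ is a connected compact nonpositively curved cube complex, $\tY_j$ is simply connected (being convex in the $\CAT(0)$ complex $\tX_v$), and $\pstab(\rho_j)$ acts freely on it, so $\tY_j$ is an elevation of $A_j$ to $\tX_v$. I would then invoke Remark~\ref{remark: gates correspond to intersection of subgroups} with the vertex space $X_v$ in the role of the ambient complex, the local isometries $A_1,\dots,A_k \to X_v$, and trivial conjugators: the intersection $\bigcap_{j=1}^k\pstab(\rho_j) = \pstab(T)$ is then represented by the local isometry
\[
\pf_{j=1}^{k}\tY_j \,/\, \pstab(T) \to X_v,
\]
and $\tY := \pf_{j=1}^{k}\tY_j$, an iterated gate projection of convex subcomplexes landing inside $\tY_1$, is a convex subcomplex of $\tX_v$ by transitivity of convexity. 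This is precisely the conclusion of the corollary.

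The argument is essentially formal once Lemma~\ref{lemma: represent path stabilizer} is in hand; the only point requiring care — and the place I would expect a reader to look twice — is the application of Remark~\ref{remark: gates correspond to intersection of subgroups} with $X_v$, rather than $X$, as the ambient complex. This is legitimate because $X_v$ is a nonpositively curved cube complex, the maps $A_j \to X_v$ are local isometries, and the groups $\pstab(\rho_j)$ act cocompactly on the convex subcomplexes $\tY_j$, so the cocompactness input needed for the iterated gate projection (via \cite[Lemma~2.8]{Fioravanti2023} together with associativity of $\pf$) is available. I do not anticipate a genuine obstacle beyond this bookkeeping.
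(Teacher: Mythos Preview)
Your proposal is correct and follows essentially the same approach as the paper: decompose $T$ as a union of finitely many paths from $\tv$, apply Lemma~\ref{lemma: represent path stabilizer} to each, and then take the iterated projection $\tY = \pf_j \tY_j$ to represent the intersection $\pstab(T) = \bigcap_j \pstab(\rho_j)$. Your version is slightly more explicit (handling the degenerate case $T=\{\tv\}$, using leaves to index the paths, and flagging the passage to $X_v$ as ambient complex), but the argument is the same.
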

\begin{proof}
The finite subtree $T$ is a union of finitely many paths $\rho_1,\dots, \rho_n$ starting from $\tv$. So the pointwise stabilizer is a finite intersection $\pstab(T) = \cap_{i=1}^n \pstab(\rho_i)$.
By Lemma~\ref{lemma: represent path stabilizer}, each pointwise stabilizer $\pstab(\rho_i)$ is represented by a local isometry $\tY_i/\pstab(\rho_i)\to X_v$, where $\pstab(\rho_i)$ acts cocompactly on the convex subcomplex $\tY_i\subset \tX_v$. Thus, the intersection $\pstab(T) = \cap_{i=1}^n \pstab(\rho_i)$ acts cocompactly on the projection 
\[
\tY = \tY_1\pf \tY_2\pf \dots \pf \tY_n.
\]
The isometric embedding $\tY\hookrightarrow \tX_v$ then induces a a local isometry
\[
\tY/\pstab(T) \to \tX_v/\pi_1X_v = X_v,
\]
which represents the subgroup $\pstab(T)\leq \pi_1X_v$.
\end{proof}

\subsection{Finite stature}
We review the following notion from \cite{Huang--Wise2019statureseparabilitygraphsgroups}.
\begin{defn}\label{definition: finite stature}
Let $G$ be a group and let $\Omega = \{G_\lambda\}_{\lambda\in \Lambda}$ be a collection of subgroups of $G$. We say that $G$ has \textit{finite stature} with respect to $\Omega$ if for each $H\in \Omega$, there are finitely many $H$-conjugacy classes of infinite subgroups of the form $H\cap C$, where $C$ is an intersection of $G$-conjugates of elements of $\Omega$.
\end{defn}
Suppose that a cube complex $X$ splits as a graph of nonpositively curved cube complexes $X_\Gamma$. The fundamental group $\pi_1X$ then splits as a graph of groups. We are particularly interested in the finite stature of $\pi_1X$ with respect to the vertex groups $\{\pi_1X_v\mid v\in V(\Gamma)\}$.
The following theorem, due to Huang and Wise \cite[Theorem~1.4]{Huang--Wise2024fintiestature}, shows that in this setting, finite stature is related to virtual specialness.
\begin{thm}\label{theorem: fintie stature equivalent to virtual specialness}
    Let $X$ be a compact cube complex that splits as a graph of nonpositively curved cube complexes. Suppose each vertex group is hyperbolic. Then the following are equivalent.
    \begin{enumerate}
        \item $\pi_1X$ has finite stature with respect to the vertex groups.
        \item $X$ is virtually special.
    \end{enumerate}
\end{thm}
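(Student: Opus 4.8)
The plan is to prove the two implications separately; the direction $(2)\Rightarrow(1)$ is a short consequence of the gate machinery already established, while $(1)\Rightarrow(2)$ is the substantive half and follows the hierarchy argument of Huang--Wise.

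For $(2)\Rightarrow(1)$, I would pass to a compact special cover of $X$; after subdividing, each vertex space $X_v$ lifts to a locally convex subcomplex, so $\pi_1X_v$ acts cocompactly on a convex subcomplex of $\tX$ and each vertex space (hence each of its covers) is virtually special. Applying Corollary~\ref{corollary: gates are bounded} to the finite family of local isometries $X_v\to X$, one obtains both a uniform diameter bound on the gate quotients $\pf_{i=1}^m g_i\tX_{v_i}/\cap_i \pi_1 X_{v_i}^{g_i^{-1}}$ and finitely many $\pi_1X$-conjugacy classes among finite intersections of $\pi_1X$-conjugates of vertex groups. A subgroup of the form $H\cap C$ with $H=\pi_1X_v$ and $C$ an intersection of conjugates of vertex groups is again such a finite intersection; the diameter bound lets one promote $\pi_1X$-conjugacy to $H$-conjugacy, since a bounded gate can be carried back into $\tX_v$ in only finitely many ways. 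This yields finitely many $H$-conjugacy classes, i.e.\ finite stature.

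For $(1)\Rightarrow(2)$, I would argue by induction over a quasiconvex hierarchy of $\pi_1X$ coming from the graph-of-groups structure. At each stage one splits off an edge of $\Gamma$ (equivalently a hyperplane of an appropriate cover) and aims to invoke the Malnormal Combination Theorem~\ref{malnormal combination}. The edge subgroups are quasiconvex in the hyperbolic vertex groups but generally not malnormal, and finite stature is precisely the input that allows one, after passing to suitable finite-index special subgroups of the vertex groups via the Malnormal Special Quotient Theorem, to make the relevant hyperplane subgroup malnormal while keeping the vertex pieces compatible along all edge spaces. Here the pointwise-stabilizer reformulation of finite stature from Section~\ref{section: stature} is what controls how the edge spaces (and iterated intersections of them) sit inside the newly chosen covers, so that only finitely many patterns occur and a single finite-index cover suffices at each vertex. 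Theorem~\ref{malnormal combination}, together with the inductive hypothesis that the vertex pieces are already virtually special, then gives virtual specialness of the glued cover, and one iterates up the hierarchy.

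The main obstacle is the $(1)\Rightarrow(2)$ direction: arranging the finite-index covers chosen at different vertices to be mutually compatible along every edge space simultaneously, and verifying that the malnormality hypothesis of Theorem~\ref{malnormal combination} can actually be achieved at each step. This is where finite stature is indispensable and where the full strength of the Malnormal Special Quotient Theorem and the Malnormal Combination Theorem is consumed; the reverse implication, by contrast, is essentially formal once Corollary~\ref{corollary: gates are bounded} is in hand.
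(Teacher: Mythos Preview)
The paper does not prove this theorem. It is stated with the attribution ``The following theorem, due to Huang and Wise \cite[Theorem~1.4]{Huang--Wise2024fintiestature}'' and is invoked as a black box; no argument for either direction appears in the present paper. So there is nothing to compare your proposal against here.

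That said, a brief comment on your sketch. Your $(2)\Rightarrow(1)$ argument via Corollary~\ref{corollary: gates are bounded} is morally right and is indeed close to how one would run it, but note two points: the definition of finite stature allows $C$ to be an \emph{arbitrary} (possibly infinite) intersection of conjugates, so you need a descending-chain step (as in the proof of Lemma~\ref{lemma: vefity finite stature on paths}) to reduce to finite intersections before invoking the corollary; and the promotion from $\pi_1X$-conjugacy to $\pi_1X_v$-conjugacy deserves one more sentence. Your $(1)\Rightarrow(2)$ outline correctly names the ingredients (MSQT, Malnormal Combination), but the actual Huang--Wise proof is substantially more involved than a straightforward induction over the edges of $\Gamma$: the compatibility problem you flag as ``the main obstacle'' is exactly where the work lies, and it is handled by a careful construction of a tower of covers controlled by the finite-stature hypothesis, not by a direct appeal to Theorem~\ref{malnormal combination} at each stage. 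Since the present paper only consumes this theorem, none of this is needed here.
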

Recall that for each vertex group $\pi_1X_v$, its conjugate $\pi_1X_v^g$ in $\pi_1X$ corresponds to the stabilizer of a vertex $\tv$ in the associated Bass--Serre tree $\cT$. 
So the intersection $H \cap C$ in Definition~\ref{definition: finite stature} corresponds to the pointwise stabilizer $\pstab(T)$ of some (possibly infinite) subtree $T \subset \cT$.
Therefore, to establish finite stature, it suffices to consider pointwise stabilizers of subtrees (see \cite[Lemma 3.9]{Huang--Wise2019statureseparabilitygraphsgroups}). The following result shows that it is enough to restrict attention to pointwise stabilizers of finite paths.
\begin{lem}\label{lemma: vefity finite stature on paths}
Let $X$ be a compact cube complex that splits as a graph of virtually special cube complexes. Let $\cT$ be the associated Bass--Serre tree. Then the following are equivalent.
\begin{enumerate}
    \item $\pi_1X$ has finite stature with respect to the vertex groups.
    \item For each vertex $v\in V(\Gamma)$, there are finitely many $\pi_1X_v$-conjugacy classes of infinite subgroups of the form $\pi_1X_v\cap (\cap_{e\in \cE}\stab(e))$, where $\cE\subset E(\cT)$.
    \item For each vertex $v\in V(\Gamma)$ and a lift $\tv\in V(\cT)$ with $\stab(\tv) = \pi_1X_v$, there are finitely many $\pi_1X_v$-conjugacy classes of infinite subgroups of the form $\pstab(T)$, where $T\subset \cT$ is a subtree containing $\tv$.
    \item For each vertex $v\in V(\Gamma)$ and a lift $\tv\in V(\cT)$ with $\stab(\tv) = \pi_1X_v$, there are finitely many $\pi_1X_v$-conjugacy classes of infinite subgroups of the form $\pstab(T)$, where $T\subset \cT$ is a finite subtree containing $\tv$.
    \item For each vertex $v\in V(\Gamma)$ and a lift $\tv\in V(\cT)$ with $\stab(\tv) = \pi_1X_v$, there are finitely many $\pi_1X_v$-conjugacy classes of infinite subgroups of the form $\pstab(\rho)$, where $\rho$ is a finite path in $\cT$ starting from $\tv$.
\end{enumerate}
\end{lem}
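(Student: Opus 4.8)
The plan is to prove the cycle of implications $(1)\Rightarrow(2)\Rightarrow(3)\Rightarrow(4)\Rightarrow(5)\Rightarrow(1)$, where several of the arrows are essentially definitional translations (via Lemma~\ref{lemma: represent path stabilizer} and Corollary~\ref{corollary: represent tree stabilizer}) and the only substantive work is in the final implication $(5)\Rightarrow(1)$. For $(1)\Leftrightarrow(2)$ we use the standard dictionary recalled just before the lemma: conjugates of vertex groups are exactly vertex stabilizers in $\cT$, and an intersection $C$ of $\pi_1X$-conjugates of vertex groups is the pointwise stabilizer of the corresponding set of vertices of $\cT$; intersecting with a fixed $\pi_1X_v$ amounts to fixing one vertex $\tv$ and then fixing a collection of edges (a subtree containing $\tv$), which is what $(2)$ records. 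The equivalence $(2)\Leftrightarrow(3)$ is a reformulation in terms of $\pstab(T)$ for subtrees $T\ni\tv$, using that $\pi_1X_v\cap(\cap_{e\in\cE}\stab(e))$ is exactly $\pstab$ of the subtree generated by $\tv$ together with the edges in $\cE$.

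The implications $(3)\Rightarrow(4)\Rightarrow(5)$ are immediate since finite subtrees are subtrees and finite paths are finite subtrees, so the family of subgroups shrinks and finiteness of conjugacy classes is inherited. The real content is the reverse chain. For $(5)\Rightarrow(4)$: every finite subtree $T\ni\tv$ is a union of finitely many finite paths $\rho_1,\dots,\rho_k$ from $\tv$, so $\pstab(T)=\bigcap_i\pstab(\rho_i)$; each $\pstab(\rho_i)$, up to $\pi_1X_v$-conjugacy, lies in one of finitely many classes by $(5)$, and moreover by Lemma~\ref{lemma: represent path stabilizer} each is represented by a local isometry from a convex subcomplex of $\tX_v$ into $X_v$. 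The point is then that a finite intersection of subgroups each drawn from a finite list of conjugacy classes of convex-cocompact subgroups of $\pi_1X_v$ again falls into finitely many conjugacy classes: this is exactly Corollary~\ref{corollary: gates are bounded}(2) applied to the finite collection of local isometries representing the finitely many conjugacy classes of path stabilizers. For $(4)\Rightarrow(3)$ and the passage to infinite subtrees, one approximates an infinite subtree $T$ by its finite subtrees $T_1\subset T_2\subset\cdots$ with $\bigcup T_n=T$; then $\pstab(T)=\bigcap_n\pstab(T_n)$ is a descending intersection. Using Corollary~\ref{corollary: gates are bounded}(1), the representing convex subcomplexes $\tY_n\subset\tX_v$ for $\pstab(T_n)$ have uniformly bounded diameter in their quotients, so the descending chain $\pstab(T_1)\supseteq\pstab(T_2)\supseteq\cdots$ stabilizes: there is $N$ with $\pstab(T)=\pstab(T_N)$. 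Hence every infinite path-stabilizer is in fact a finite-subtree stabilizer, and $(4)$ gives finitely many conjugacy classes. Combining this with $(5)\Rightarrow(4)$ closes the loop and yields $(5)\Rightarrow(1)$ via $(1)\Leftrightarrow(2)\Leftrightarrow(3)$.

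I expect the main obstacle to be the stabilization argument in the step handling infinite subtrees, i.e. showing that $\pstab(T)=\pstab(T_N)$ for some $N$ when $T$ is infinite. Naively the subgroups $\pstab(T_n)$ form a descending chain, but without a bound this need not stabilize (indeed the Wise example in the remark after Corollary~\ref{corollary: gates are bounded} shows it can genuinely fail to stabilize when virtual specialness is dropped). The resolution uses virtual specialness crucially: Corollary~\ref{corollary: gates are bounded}(1) provides a uniform diameter bound $S$ on all the quotients $(\pf_i g_i\tA_{n_i})/\cap_i\pi_1A_{n_i}^{g_i^{-1}}$, where the $A_{n_i}$ range over a \emph{fixed finite} collection of local isometries into $X_v$ — namely those representing the finitely many $\pi_1X_v$-conjugacy classes of path stabilizers supplied by $(5)$. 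Since each $\pstab(T_n)$ is a finite intersection of (conjugates of) these, its representing convex subcomplex in $\tX_v$ has quotient of diameter $\le S$; as the representing subcomplexes are nested (up to the covering maps) and of bounded diameter, the chain of subgroups is eventually constant. Care is needed to set this up correctly: one must first invoke $(5)$ to extract the finite list of local isometries $A_1,\dots,A_n\to X_v$ before applying Corollary~\ref{corollary: gates are bounded}, so that the constant $S$ depends only on that list and not on the particular subtree $T$. The remaining steps are bookkeeping with the Bass--Serre correspondence and are routine.
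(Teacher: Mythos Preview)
Your overall plan matches the paper closely: the paper also cites $(1)\Leftrightarrow(2)$ from Huang--Wise, observes the easy implications toward $(5)$, and proves the substantive reverse chain $(5)\Rightarrow(4)\Rightarrow(3)\Rightarrow(2)$. Your argument for $(5)\Rightarrow(4)$ via Corollary~\ref{corollary: gates are bounded}(2) is exactly the paper's.

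The gap is in your $(4)\Rightarrow(3)$. You assert that the uniform diameter bound from Corollary~\ref{corollary: gates are bounded}(1) makes the descending chain $\pstab(T_1)\supseteq\pstab(T_2)\supseteq\cdots$ stabilize, but this inference is not valid as stated: a bounded quotient diameter only yields finitely many \emph{conjugacy classes} among the $\pstab(T_n)$, and a strictly descending chain can in principle revisit a conjugacy class infinitely often (a group can be conjugate to a proper subgroup of itself). The missing ingredient is Lemma~\ref{lemma: convex-cocopmact not conjugate to proper subgroup}: a subgroup represented by a local isometry of compact nonpositively curved cube complexes is never $\pi_1X_v$-conjugate to a proper subgroup of itself. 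With that lemma, finitely many conjugacy classes along a descending chain does force stabilization.

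Once you supply Lemma~\ref{lemma: convex-cocopmact not conjugate to proper subgroup}, your detour through Corollary~\ref{corollary: gates are bounded}(1) (and through $(5)$) becomes superfluous. The paper's argument for $(4)\Rightarrow(3)$ is more direct: assumption $(4)$ already gives finitely many $\pi_1X_v$-conjugacy classes among the $\pstab(T_n)$; if the chain had infinitely many strict inclusions, Lemma~\ref{lemma: convex-cocopmact not conjugate to proper subgroup} would force those terms into pairwise distinct conjugacy classes, contradicting $(4)$. No diameter bound and no appeal to $(5)$ are needed at this step.
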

\begin{proof}
The equivalence of (1) and (2) is proved in \cite[Lemma 3.9]{Huang--Wise2019statureseparabilitygraphsgroups}.
Note that (2) obviously implies (5). We show (5) $\Longrightarrow$ (4) $\Longrightarrow$ (3) $\Longrightarrow$ (2).

Assume (3). For each collection of edges $\cE\subset E(\cT)$, take the convex hull $T = \Hull(\cE\cup \{\tv\})$, which is a subtree of $\cT$. Then
\[
\pi_1X_v\cap (\cap_{e\in \cE}\stab(e)) 
= \stab(\tv) \cap (\cap_{e\in \cE}\stab(e)) 
= \pstab(T).
\]
The finitely many $\pi_1X_v$-conjugacy classes of infinite subgroups $\pstab(T)$ therefore yield finitely many $\pi_1X_v$-conjugacy classes of infinite subgroups in (2).

Suppose (4) holds. For each subtree $T\subset \cT$, we can write it as an increasing union $T = \cup_{i=1}^\infty T_i$, where $T_i\subseteq T_{i+1}$ and each $T_i$ is a finite subtree containing $\tv$. Then $\pstab(T)
= \cap_{i=1}^\infty \pstab(T_i)$ is a decreasing intersection of subgroups
\begin{equation}\label{equation: descending chain}
\pstab(T_1)\supset \pstab(T_2)\supset \cdots .
\end{equation}
We claim that the above descending chain (\ref{equation: descending chain}) has only finitely many proper inclusions. Otherwise, suppose that $\pstab(T_{k_i}) \supsetneq \pstab(T_{k_i+1})$ for infinitely many integers $\{k_i\}_{i=1}^\infty$ with $k_i < k_{i+1}$.
By Corollary \ref{corollary: represent tree stabilizer}, each $\pstab(T_i)$ is represented by a local isometry of virtually special cube complexes $\tY_i/\pstab(T_i)\to X_v$. 
It then follows from Lemma \ref{lemma: convex-cocopmact not conjugate to proper subgroup} that $\pstab(T_{k_i})$ is not $\pi_1X_v$-conjugate to $\pstab(T_{n})$ for any $n>k_i$. Thus, the collection of subgroups $\{\pstab(T_{k_i})\}_{i=1}^\infty$ represents infinitely many distinct $\pi_1X_v$-conjugacy classes of subgroups.
This contradicts the assumption of (4). Therefore, the descending chain (\ref{equation: descending chain}) has only finitely many proper inclusions. In particular, $\pstab(T) = \pstab(T_n)$ for some large enough $n$. As a result, the $\pi_1X_v$-conjugacy classes of infinite subgroups in (3) coincide with the $\pi_1X_v$-conjugacy classes in (4), which are finite by assumption. This shows that (4) $\Longrightarrow$ (3).

Finally, we prove (5) $\Longrightarrow$ (4). Notice that each finite subtree $T$ containing $\tv$ is the union of finitely many paths $T = \rho_1\cup \cdots \cup \rho_m$, where each $\rho_i$ starts from $\tv$. So $\pstab(T)$ is a finite intersection $\pstab(T) = \bigcap_{i=1}^m \pstab(\rho_i)$. Choose finite paths $\brho_1,\cdots, \brho_n$ whose pointwise stabilizers represent the $\pi_1X_v$-conjugacy classes in (5). 
By Lemma \ref{lemma: represent path stabilizer}, each $\pstab(\brho_i)$ is represented by a local isometry $A_i = \tY_i/\pstab(\brho_i)\to X_v$ for some convex subcomplex $\tY_i\subset \tX_v$.
Since each infinite $\pstab(\rho_i)$ belongs to the $\pi_1X_v$-conjugacy class of $\pstab(\brho_{n_i})$ for some $1\leq n_i\leq n$, there exist elements $g_1,\dots, g_m\in \pi_1X_v$ such that
\[
\pstab(T)
= \bigcap_{i=1}^m\pstab(\rho_i)
= \bigcap_{i=1}^m \pstab(\brho_{n_i})^{g_i^{-1}}
= \bigcap_{i=1}^m \pi_1A_{n_i}^{g_i^{-1}}.
\]
By Corollary \ref{corollary: gates are bounded}(b), the above finite intersection has only finitely many $\pi_1X_v$-conjugacy classes of infinite subgroups. This proves (4).
\end{proof}
\begin{rmk}
When the vertex groups are hyperbolic and the edge groups are quasiconvex subgroups, Lemma~\ref{lemma: vefity finite stature on paths} is proved in \cite[Lemma~3.19]{Huang--Wise2019statureseparabilitygraphsgroups} and \cite[Proposition~3.3]{jankiewicz2023finitestatureartingroups}. The proofs use the fact that quasiconvex subgroups of a hyperbolic group have finite height \cite{GitikMitraRipsSageev1998}.
\end{rmk}

\section{Quasilines in $\CAT(0)$ cube complexes}\label{section: quasiline}
\subsection{Hyperplanes in quasilines}
\begin{defn}
Let $X$ be a compact nonpositively curved cube complex and let $\tX$ be its universal cover. 
A \textit{quasiline} in $\tX$ is a pair $(\tY,\phi)$ consisting of a convex subcomplex $\tY\subset \tX$ and a nontrivial element $\phi\in\pi_1X$ such that the subgroup $\la\phi\ra$ acts cocompactly on $\tY$. Two quasilines $(\tY_1,\phi_1), (\tY_2,\phi_2)$ are \textit{isomorphic} in $\tX$ if there exists $g\in \pi_1X$ such that $(\tY_2,\phi_2) = (g\tY_1,g\phi_1g^{-1})$.
\end{defn}
\begin{rmk}\label{remark: hyperbolic elements of quasiline}
Since $\tY$ admits a geometric action by the infinite cyclic group $\la\phi\ra\cong \bZ$, it is quasi-isometric to $\bZ$ and hence has two ends.
By Remark \ref{remark: elements are hyperbolic}, $\phi$ is a hyperbolic isometry and has positive combinatorial translation length $|\phi|>0$. In particular, $\phi$ preserves two ends of $\tY$.
\end{rmk}
\begin{exam}
Let $X$ be a compact nonpositively curved cube complex and let $G_1,\cdots, G_n$ be subgroups of $\pi_1X$. Suppose each $G_i$ acts cocompactly on a convex subcomplex $\tY_i$ of $\tX$, then the intersection $G_1\cap \cdots\cap G_n$ acts cocompactly on the projection $\tY_1\pf \cdots \pf \tY_n$. If $G_1\cap \cdots\cap G_n \cong \bZ$, then by choosing a generator $\phi$ of $\cap_{i=1}^nG_i$, we obtain a quasiline $\left(\pf_{i=1}^n\tY_i, \phi\right)$ in $\tX$.
\end{exam}

We recall the following from \cite{CapraceSageev2011RankRigidity}.
\begin{defn}
Let $H$ be a hyperplane in a $\CAT(0)$ cube complex $\tX$. A halfspace of $H$ is said to be \textit{deep} if it contains points arbitrarily far away from $H$, and \textit{shallow} otherwise. The hyperplane $H$ is \textit{essential} if both of its halfspaces $H^\pm$ are deep, \textit{trivial} if both are shallow, and \textit{half-essential} if exactly one of them is deep. 
\end{defn}

\begin{lem}\label{lemma: characterization of three types of hyperplanes}
Let $X$ be a compact nonpositively curved cube complex. Suppose $(\tY,\phi)$ is a quasiline in $\tX$. Then for each hyperplane $H$ in $\tY$, we have:
\begin{enumerate}
    \item $H$ is trivial if and only if $\diam(H) = \infty$.
    \item $H$ is half-essential if and only if it has a compact halfspace.
    \item $H$ is essential if and only if $\diam(H)<\infty$ and $H$ has only noncompact halfspaces.
\end{enumerate}
\end{lem}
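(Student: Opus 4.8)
The plan is to analyze the three mutually exclusive and exhaustive cases for a hyperplane $H$ in the quasiline $(\tY,\phi)$ using the basic geometry of $\tY$ as a two-ended space on which $\la\phi\ra$ acts cocompactly. Since $\tY$ is quasi-isometric to $\bZ$, every combinatorial geodesic ray in $\tY$ lies within bounded Hausdorff distance of one of the two ends, and $\phi$ translates along $\tY$ roughly as a translation of $\bZ$. The key structural fact I would use repeatedly is that a halfspace $H^\epsilon$ of $H$ in $\tY$ is deep exactly when it contains points arbitrarily far from $H$, and that in a two-ended complex a subset is either bounded or coarsely contains one of the two ends.

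First I would prove (1). If $\diam(H) = \infty$, then since $H$ is convex and $\tY$ has only two ends, $H$ itself is coarsely a neighborhood of one (or both) ends; its carrier $N(H)$ is then unbounded in that direction as well, so the halfspace on the other side of $H$ along that end must be shallow — otherwise we would find three ends. A symmetric argument shows both halfspaces are shallow, so $H$ is trivial. Conversely, if $H$ is trivial, both halfspaces $H^\pm$ are bounded, so $\tY = H^+ \sqcup N_o(H) \sqcup H^-$ forces $N_o(H)$, hence $N(H)$, hence $H$, to be unbounded, giving $\diam(H) = \infty$. I would make the "coarsely contains an end" statements precise using the fact that $\tY/\la\phi\ra$ is compact, so there is a global bound $D$ with every vertex of $\tY$ within distance $D$ of some $\phi$-translate of a fixed compact fundamental domain, reducing all diameter questions to counting hyperplanes crossed along a geodesic representing a power of $\phi$.

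For (2), if $H$ is half-essential, one halfspace, say $H^+$, is shallow; then $H^+ \cup N(H)$ is within bounded distance of $H$, and since (by part (1), contrapositive) $H$ being non-trivial means $\diam(H) < \infty$, the set $H^+ \cup N(H)$ is bounded, so $H^+$ is a compact halfspace. Conversely, a compact halfspace is in particular shallow while the complementary halfspace, containing one whole end of $\tY$, is deep, so $H$ is half-essential. Part (3) is then immediate by elimination: $H$ is essential iff it is neither trivial nor half-essential, which by (1) and (2) is iff $\diam(H) < \infty$ and $H$ has no compact halfspace, i.e. both halfspaces are noncompact (equivalently both deep).

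The main obstacle I expect is making rigorous the intuitive claim that in a two-ended $\CAT(0)$ cube complex a convex subset, or a halfspace, is either bounded or "captures an end," and that three disjoint unbounded convex pieces cannot coexist. The clean way to handle this is to pick a $\phi$-axis: a combinatorial geodesic line $\ell \subset \tY$ on which $\phi$ acts by translation of length $|\phi| > 0$ (which exists by Remark~\ref{remark: hyperbolic elements of quasiline} and semi-simplicity), observe that $\tY$ lies in a bounded neighborhood of $\ell$ by cocompactness, and translate every statement about halfspaces and hyperplane diameters into statements about which hyperplanes separate points of $\ell$. A hyperplane crossing $\ell$ in a single edge has both halfspaces unbounded along $\ell$; a hyperplane disjoint from $\ell$ but at bounded distance has a bounded halfspace; and the periodicity of $\ell$ under $\phi$ controls all the relevant coarse behavior. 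Once the axis is fixed, each of the three equivalences reduces to a short finiteness-of-hyperplanes argument.
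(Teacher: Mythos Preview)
Your treatment of (2), (3), and the easy direction of (1) is essentially the paper's, modulo one sloppiness: when you write ``if $H$ is trivial, both halfspaces $H^\pm$ are bounded,'' you are conflating \emph{shallow} (contained in some $N_r(H)$) with \emph{bounded}. A shallow halfspace is bounded only if $H$ itself is bounded, which is what you are trying to disprove. The correct one-line argument is the paper's: trivial means $\tY\subset N_r(H)$, and since $\diam(\tY)=\infty$ this forces $\diam(H)=\infty$.

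The substantive gap is in the hard direction of (1), $\diam(H)=\infty\Rightarrow H$ trivial. Your ``three ends'' argument does not rule out the following scenario: $H$ is unbounded but extends only toward one end $E_+$; then one halfspace could contain the other end $E_-$ and be deep, while the remaining halfspace is shallow. That would make $H$ half-essential with infinite diameter, contradicting the lemma, yet your end-count sees only two unbounded pieces ($H$ toward $E_+$, the deep halfspace toward $E_-$) and raises no alarm. The paper closes this with a direct pigeonhole: if $\diam(H)=\infty$ then $H$ meets infinitely many $\phi$-translates of a compact fundamental domain $K$, so infinitely many $\phi^{-n}(H)$ meet $K$; since only finitely many hyperplanes meet $K$, some $\phi^k$ with $k\neq 0$ stabilizes $H$, whence $\tY\subset N_r(H)$ and $H$ is trivial. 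Your final paragraph gestures at ``periodicity of $\ell$ under $\phi$,'' and indeed an axis-based proof can be made to work, but the step you actually need---an unbounded hyperplane is $\phi^k$-invariant for some $k\neq 0$---is exactly this pigeonhole, not the end-count. Once you insert it, the axis becomes optional.
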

\begin{proof}
For (1), note that a hyperplane $H$ is trivial if and only if $\tY$ lies in the $r$-neighborhood $N_r(H)$ of $H$ for some constant $r$. Because the quasiline $\tY$ has diameter $\diam(\tY) = \infty$, the inclusion $\tY\subset N_r(H)$ holds only if $\diam(H) = \infty$. 

Conversely, assume $\diam(H) = \infty$. Because $\tY$ is $\la \phi \ra$-cocompact, there exists a compact subcomplex $K\subset \tY$ such that $\la\phi\ra\cdot K = \tY$. Since $H$ is infinite, $H\cap \phi^n(K)\neq\emptyset$ for infinitely many $n$. Thus, $K$ intersects the hyperplanes $\phi^{-n}(H)$ for infinitely many integers $n$. However, the compact complex $K$ intersects only finitely many hyperplanes in $\tY$. So there exist integers $m\neq n$ such that $\phi^{-m}(H)$ and $\phi^{-n}(H)$ are the same hyperplane and intersect $K$. In particular, $\phi^{k}\in \stab(\phi^{-n}(H))$, where $k = n-m$.
Because $\la\phi\ra\cdot K = \tY$, by letting $K' = \bigcup_{i=0}^{|k|-1}\phi^i(K)$, we have $\la\phi^k\ra\cdot  K' = \tY$. Now take $r = \diam(K')$. Choose a point $x\in K\cap \phi^{-n}(H)\subset K'\cap \phi^{-n}(H)$, then
\[
\tY
= \la\phi^k\ra\cdot  K' 
\subset \la\phi^k\ra\cdot N_{r}\left(  x\right) 
= N_{r}\left(\la\phi^k\ra\cdot  x\right) 
\subset N_r(\phi^{-n}(H)).
\]
Thus, 
\[
\tY = \phi^n(\tY)\subset \phi^n(N_r(\phi^{-n}(H))) = N_r(H),
\]
which implies that $H$ is trivial. This finishes the proof of (1).

For (2), let $H$ be a half-essential hyperplane. Then the shallow halfspace of it, say $H^+$, is contained in the $k$-neighborhood $N_k(H)$ of $H$ for some constant $k$. By (1), $\diam(H)<\infty$ and so $H^+$ is compact. Conversely, suppose $H^+$ is a compact halfspace of $H$. Because $H$ is contained in the $1$-neighborhood $N_1(H^+)$, we have $\diam(H)<\infty$, and $H$ cannot be essential as $H^+$ is not deep.

Finally, let $H$ be an essential hyperplane. Its halfspaces are both deep and therefore noncompact, and by (1) we have $\diam(H) < \infty$.
Conversely, finite diameter implies that $H$ is not trivial. Since it only has noncompact halfspaces, (2) implies that $H$ is not half-essential. This proves (3).
\end{proof}

\begin{lem}\label{lemma: properties of three types of hyperplanes}
Let $X$ be a compact nonpositively curved cube complex.
Suppose $(\tY,\phi)$ is a quasiline in $\tX$, then 
\begin{enumerate}
    \item there exists a constant $D$ such that $\diam(H)\leq D$ for each hyperplane $H$ in $\tY$ that is not trivial;
    \item there exists a constant $K$ such that for the shallow halfspace $H^\epsilon$ of each half-essential hyperplane $H$, $\diam(H^\epsilon)\leq K$;
    \item there exists an integer $d$ such that for each essential hyperplane $H$, $H$ and $\phi^d(H)$ do not intersect, and are only simultaneously crossed by trivial hyperplanes;
    \item $\tY$ has only finitely many trivial hyperplanes.
\end{enumerate}
The constants $K,D,d$ depend only on the quasiline $(\tY,\phi)$.
Moreover, any constants $\bD\geq D, \bK\geq K$ and integer $\bd\geq d$ also satisfy (1), (2), (3) respectively.
\end{lem}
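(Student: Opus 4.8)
The plan is to prove the four statements essentially in turn, using the cocompactness of the $\la\phi\ra$-action on $\tY$ together with Lemma~\ref{lemma: characterization of three types of hyperplanes}. Fix once and for all a compact subcomplex $K\subset \tY$ with $\la\phi\ra\cdot K = \tY$. The key finiteness observation, used repeatedly, is that $K$ meets only finitely many hyperplanes of $\tY$, and every hyperplane of $\tY$ is of the form $\phi^n H$ for some hyperplane $H$ meeting $K$ and some $n\in\bZ$; thus there are only finitely many $\la\phi\ra$-orbits of hyperplanes.

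For (4): a trivial hyperplane $H$ has $\diam(H)=\infty$ by Lemma~\ref{lemma: characterization of three types of hyperplanes}(1). As in the proof of that lemma, since $H$ is infinite it meets $\phi^n K$ for infinitely many $n$, so some nontrivial power $\phi^k$ stabilizes $H$; hence $\stab_{\la\phi\ra}(H)$ has finite index in $\la\phi\ra$, so the $\la\phi\ra$-orbit of $H$ is finite. Since there are finitely many orbits of hyperplanes total and each trivial one has finite orbit, there are finitely many trivial hyperplanes; take $D_0$ to be the maximum diameter of those hyperplanes of $\tY$ meeting $K$ that are \emph{not} trivial — actually, more carefully, for (1) and (2) we argue via orbits as follows.

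For (1): every non-trivial hyperplane $H$ is $\phi^n H'$ for some $H'$ meeting $K$, and $\diam(H)=\diam(H')$ since $\phi$ is an isometry; among the finitely many hyperplanes meeting $K$, let $D$ be the maximum of $\diam(H')$ over those that are not trivial. Then $\diam(H)\le D$ for every non-trivial hyperplane $H$. (Here we use Lemma~\ref{lemma: characterization of three types of hyperplanes}(1) to know that any $H'$ meeting $K$ with $\diam(H')=\infty$ is trivial, hence excluded.) For (2): a half-essential $H$ has, by Lemma~\ref{lemma: characterization of three types of hyperplanes}(2), a compact shallow halfspace $H^\epsilon$; writing $H=\phi^n H'$ with $H'$ meeting $K$, the translate $\phi^{-n}H^\epsilon$ is the shallow halfspace of $H'$ and is isometric to $H^\epsilon$. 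Among the finitely many hyperplanes meeting $K$, exactly those that are half-essential contribute a finite compact shallow halfspace; let $K$ (the constant — I will rename it, e.g.\ $K_0$, to avoid clashing with the subcomplex) be the maximum of their diameters. Then $\diam(H^\epsilon)\le K_0$ for every half-essential $H$.

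For (3): by Lemma~\ref{lemma: properties of three types of hyperplanes}(1) every essential hyperplane has diameter $\le D$. I claim there is an integer $d$, depending only on $(\tY,\phi)$, so that for any essential $H$, $H$ and $\phi^d H$ are disjoint and the only hyperplanes crossing both are trivial. The point is that an essential $H$ is contained in a bounded neighborhood of $K$ up to translation — more precisely $\phi^{-n}H$ meets $K$ for suitable $n$ and has diameter $\le D$, so $\phi^{-n}H\subset N_{D+\diam(K)}(K)$; applying $\phi^n$, $H\subset N_{D+\diam(K)}(\phi^n K)$, and since $|\phi|>0$, translating by a large power of $\phi$ moves $H$ off itself. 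Quantitatively: if $H$ and $\phi^d H$ were crossed by a common \emph{non-trivial} hyperplane $J$, then $J$ has $\diam(J)\le D$, and $J$ must come within distance roughly $D$ of both $H$ and $\phi^d H$; but the ``$\phi$-coordinate'' (the number of translates $\phi^i H_0$ of a fixed essential hyperplane $H_0$ separating two points) of $\phi^d H$ exceeds that of $H$ by a definite amount proportional to $d$, giving a contradiction once $d$ is large enough in terms of $D$, $\diam(K)$, and $|\phi|$. Choosing $d$ accordingly, $H$ and $\phi^d H$ are separated by so many hyperplanes that any common crosser would have to be trivial; in particular they are disjoint. The last sentence of the lemma — monotonicity in $\bD,\bK,\bd$ — is immediate: enlarging an upper bound preserves (1) and (2), and if $H,\phi^d H$ are disjoint and simultaneously crossed only by trivial hyperplanes, the same holds for $\phi^{\bd}H$ with $\bd\ge d$, since $\phi^{\bd}H$ lies in a halfspace of $\phi^d H$ disjoint from $H$.

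The main obstacle is the quantitative estimate in (3): making precise the claim that a bounded-diameter (essential) hyperplane, when pushed forward by a sufficiently high power of $\phi$, cannot be co-crossed by another bounded-diameter hyperplane. The cleanest route is to fix one essential hyperplane $H_0$ (if none exists, (3) is vacuous), use the family $\{\phi^i H_0\}_{i\in\bZ}$ as a coarse coordinate along $\tY$, observe that consecutive ones are a definite distance apart (by $|\phi|>0$ and cocompactness), and then show a hyperplane of diameter $\le D$ can cross only boundedly many members of this family; translating $H$ by more than that bound forces disjointness of the sets of family-members crossed, which (via the Bridge Theorem, Theorem~\ref{Thm: bridge theorem}) rules out any bounded-diameter common crosser. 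I expect this to be the only place requiring real work; (1), (2), (4) are orbit-counting arguments as sketched above.
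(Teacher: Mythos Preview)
Your arguments for (1), (2), and (4) are correct. In fact your proof of (4) --- observing that each trivial hyperplane has finite $\la\phi\ra$-orbit because some power of $\phi$ stabilizes it, and there are only finitely many orbits --- is a clean alternative to the paper's argument, which instead shows that every trivial hyperplane must cross every essential hyperplane and then uses that an essential hyperplane (having finite diameter) meets only finitely many hyperplanes.

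For (3), however, you are making it harder than it needs to be. The paper avoids any coarse-coordinate or Bridge-Theorem machinery and argues directly from (1) and the translation length. Since every essential $H$ has $\diam(N(H))\le D+1$, for any $d$ one has
\[
d\bigl(N(H),\phi^d(N(H))\bigr)\ \ge\ d\,\lv\phi\rv - 2\,\diam(N(H))\ \ge\ d\,\lv\phi\rv - 2D - 2.
\]
Choosing $d$ so that $d\,\lv\phi\rv > 3D+2$ makes this distance exceed $D$; hence $H$ and $\phi^d(H)$ are disjoint, and any hyperplane crossing both must have diameter $>D$, so is trivial by (1). That is the whole argument. Your proposed route through a fixed essential $H_0$, counting how many $\phi^i H_0$ a bounded hyperplane can cross, and invoking the Bridge Theorem would eventually work but is substantially more effort for no gain.

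One small gap: your monotonicity claim for $\bd\ge d$ in (3) asserts that $\phi^{\bd}H$ lies in the halfspace of $\phi^d H$ not containing $H$, but this is not immediate --- knowing $H\cap\phi^d H=\emptyset$ for all essential $H$ does not by itself tell you $\phi^d H\cap\phi^{\bd}H=\emptyset$ when $d<\bd<2d$. The paper sidesteps this entirely: the displayed inequality above is visibly monotone in $d$, so any $\bd\ge d$ works for the same reason $d$ does.
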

\begin{proof}
The cocompact action of $\la \phi \ra$ on $\tY$ induces an action on hyperplanes of $\tY$ with finitely many orbits, say $\la \phi\ra \cdot H_1,\cdots, \la \phi\ra \cdot H_n$. By Lemma \ref{lemma: characterization of three types of hyperplanes}, only trivial hyperplanes have infinite diameter, and only shallow halfspaces of half-essential hyperplanes have finite diameters.
By taking
\[
D = \max_{1\leq i\leq n}\{\diam(H_i)\mid \diam(H_i)<\infty\}
\]
and 
\[
K = \max_{1\leq j\leq n}\{\diam(H_j^\epsilon)\mid \diam(H_j^\epsilon)<\infty,\ \text{$H_j^\epsilon$ is a halfspace of $H_j$}\},
\]
we obtained the desired $D$ in (1) and $K$ in (2). 

For each essential hyperplane $H$, the distance between its carrier $N(H)$ and the translation $\phi^d(N(H))$ satisfies
\[
d(N(H),\phi^{d}(N(H)))
\geq d \lv\phi\rv - 2 \diam(N(H)) 
\geq d\lv \phi \rv - 2D - 2.
\]
Choose $d$ large enough such that $d\lv \phi \rv > 3D+2$. Then,
$d\lv \phi \rv - 2D- 2> D>0$. Thus, $H$ and $\phi^d(H)$ do not intersect, and are only simultaneously crossed by hyperplanes of diameter $> D$. By our choice of $D$ in (1), these hyperplanes are trivial. This proves (3).

For (4), note that every essential hyperplane crosses every trivial hyperplane. In fact, if an essential hyperplane $H$ and a trivial hyperplane $J$ do not cross, a halfspace $J^{\epsilon_J}$ of $J$ is contained in a halfspace $H^{\epsilon_H}$ of $H$. But then $H$ has a deep halfspace, since $J^{\epsilon_J}$ contains points arbitrarily far from $J$, and hence arbitrarily far from $H$.
    
Because each essential hyperplane $H$ has finite diameter, it intersects finitely many hyperplanes. In particular, $\tY$ has finitely many trivial hyperplanes.

Finally, notice that our construction of $D,K,d$ depend only on the quasiline $(\tY, \phi)$.
Moreover, note that any constants $\bD\geq D$, $\bK\geq K$, $\bd\geq d$ also apply to the above argument, and therefore satisfy (1), (2), (3) respectively. This finishes the proof.
\end{proof}

\begin{lem}\label{lemma: essential hyperplane crosses long geodesic}
Let $X$ be a compact nonpositively curved cube complex.
Suppose that $(\tY,\phi)$ is a quasiline in $\tX$. Then there exist constants $N,M$ depending only on $(\tY,\phi)$, such that for any integer $k>0$, we have the following:
\begin{enumerate}
    \item $\diam(H^+\cap \phi^{\pm k}(H^-))\leq k N$ for any essential hyperplane $H$ in $\tY$.
    \item Suppose that an essential hyperplane $H$ in $\tY$ crosses a combinatorial geodesic $\gamma$ at an edge $e_0$. If $e_0$ has distance $\geq kM$ from two endpoints of $\gamma$, then $\phi^{\pm k}(H)$ crosses $\gamma$.
\end{enumerate}
Moreover, any constants $\bM\geq M, \bN\geq N$ also satisfy (1) and (2) respectively.
\end{lem}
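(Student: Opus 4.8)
The plan is to transfer both statements to elementary estimates in $\bZ$ via the quasi-isometry $\tY\simeq\bZ$. Fix a base vertex $v_0\in\tY^0$. Since $\la\phi\ra\cong\bZ$ acts geometrically on $\tY$, the orbit map $n\mapsto\phi^nv_0$ is a $(\lambda,c)$-quasi-isometry $\bZ\to\tY$ for some $\lambda\ge1$, $c\ge0$, and there is $R\ge0$ with $\tY=N_R(\la\phi\ra v_0)$. For $x\in\tY^0$ let $\mathrm{cd}(x)\in\bZ$ be the index of a nearest orbit point; comparing through nearest orbit points gives, for all $x,y\in\tY^0$,
\[
d(x,y)\le\lambda\,\lv\mathrm{cd}(x)-\mathrm{cd}(y)\rv+(c+2R),\qquad \lv\mathrm{cd}(x)-\mathrm{cd}(y)\rv\le\lambda\,d(x,y)+\lambda(c+2R).
\]
By Remark~\ref{remark: hyperbolic elements of quasiline}, $\tY$ has two ends $\xi^\pm$, which $\phi$ fixes; after relabelling, $\phi^nv_0\to\xi^+$ as $n\to+\infty$, and $\xi^+\in H^+$ (each halfspace of an essential hyperplane contains one end, as both are deep). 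I would then put an essential hyperplane $H$ of $\tY$ into a normal form relative to $\mathrm{cd}$: there are a constant $C=C(\tY,\phi)$ and an integer $t=t(H)$ with $\phi^nv_0\in H^-$ for $n\le t$, $\phi^nv_0\in H^+$ for $n>t+C$, and $d(\phi^tv_0,N(H))\le C$; moreover $t(\phi^kH)=t(H)+k$. The only real content here is that the transition window has \emph{uniformly} bounded width: the orbit $(\phi^nv_0)_n$ is a quasi-geodesic, so it lies within bounded distance of a bi-infinite combinatorial geodesic of $\tY$, which crosses $H$ at most once, and transferring across the bounded distance bounds the window; alternatively one counts directly, using that a ``sign change'' across $H$ at index $n$ exhibits $\phi^{-n}H$ as one of the finitely many hyperplanes separating $v_0$ from $\phi v_0$, these translates being distinct because $\pi_1X$ acts freely on $\tX$ so $\stab(H)=1$ (here $\diam(H)<\infty$ by Lemma~\ref{lemma: characterization of three types of hyperplanes}), while two consecutive sign changes are boundedly close since the orbit points flanking an intervening run in one halfspace both lie within $d(v_0,\phi v_0)$ of $N(H)$. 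I will also use that $\diam(H)\le D$ from Lemma~\ref{lemma: properties of three types of hyperplanes} bounds the number of hyperplanes crossing $N(H)$ by a constant $D'=D'(\tY)$.

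With this in hand, part~(1) is short. Fix $\epsilon\in\{+1,-1\}$ and set $S=H^+\cap\phi^{\epsilon k}(H^-)$ (halfspaces in $\tY$). If $x\in S$, a nearest orbit point of $x$ is either in $H^+$, forcing $\mathrm{cd}(x)>t$, or in $H^-$, in which case $H$ separates $x$ from it, so $x$ lies within $R$ of $N(H)$ and hence $\mathrm{cd}(x)$ is within a constant of $t$; either way $\mathrm{cd}(x)\ge t-C'$ for a constant $C'$. Symmetrically, using $\phi^{\epsilon k}(H)$ (with threshold $t+\epsilon k$), $\mathrm{cd}(x)\le t+\epsilon k+C'\le t+k+C'$. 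So the $\mathrm{cd}$-range of $S$ is at most $k+2C'$, and the first displayed inequality gives $\diam(S)\le\lambda(k+2C')+c+2R\le Nk$ for $N:=\lambda+2\lambda C'+c+2R$, since $k\ge1$. Enlarging $N$ only weakens the conclusion, so any $\bN\ge N$ works.

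For part~(2), with $\gamma$ a combinatorial geodesic of $\tY$: let $w$ be the endpoint of $e_0$ in $H^+$, and let $y_\pm$ be vertices of $\gamma$ at distance $kM$ from $e_0$ on the $H^\pm$ sides (they exist by hypothesis). Since $\gamma$ is geodesic it crosses $H$ only at $e_0$; of the $\ge kM-1$ hyperplanes separating $y_+$ from $w$, at most $D'$ cross $N(H)$, and the rest separate $y_+$ from $w\in N(H)$ without meeting $N(H)$, so $d(y_+,N(H))\ge kM-D'-1$. With $y_+\in H^+$ and the index estimates this forces $\mathrm{cd}(y_+)\ge t+(kM-c_0)/\lambda$, and symmetrically $\mathrm{cd}(y_-)\le t-(kM-c_0)/\lambda$, for a constant $c_0$. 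If $M$ is chosen larger than an explicit constant in $\lambda,c,R,C,D'$, then for every $k\ge1$ we obtain $\mathrm{cd}(y_+)>t+k+C$ and $\mathrm{cd}(y_-)\le t-k$, with the nearest orbit points of $y_\pm$ at distance $>R$ from both $\phi^{k}(H)$ and $\phi^{-k}(H)$; hence $y_+\in\phi^{k}(H^+)\cap\phi^{-k}(H^+)$ and $y_-\in\phi^{k}(H^-)\cap\phi^{-k}(H^-)$. Thus $\phi^{k}(H)$ and $\phi^{-k}(H)$ each separate $y_-$ from $y_+$, both on $\gamma$, so $\gamma$ crosses $\phi^{\pm k}(H)$; enlarging $M$ strengthens the hypothesis, so any $\bM\ge M$ works. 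The step I expect to be the main obstacle is the uniform bound on the transition window, which must hold for all essential hyperplanes simultaneously (in particular for all translates $\phi^j(H_i)$), together with the choice of $M$ making part~(2) uniform in $k$ — the delicate regime being small $k$, where $H$ and $\phi^{\pm k}(H)$ may still cross; everything else is bookkeeping with quasi-isometry constants. (Alternatively part~(2) can be deduced from part~(1) for large $k$, since otherwise the portion of $\gamma$ beyond $e_0$ would lie in $H^+\cap\phi^{k}(H^-)$ and be too long, but the argument above avoids a case split.)
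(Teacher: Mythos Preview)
Your argument is correct but takes a genuinely different route from the paper. The paper stays in the cube-complex language throughout: for part~(1) it observes that $H^+\cap\phi(H^-)$ and $H^-\cap\phi(H^+)$ are compact (otherwise $\tY$ would have a third end), bounds their diameters over the finitely many $\la\phi\ra$-orbits of essential hyperplanes to get $N$, and then inducts on $k$ via the decomposition
\[
H^+\cap\phi^k(H^-)\subset \bigl(H^+\cap\phi^{k-1}(H^-)\bigr)\cup\phi^{k-1}(N(H))\cup\phi^{k-1}\bigl(H^+\cap\phi(H^-)\bigr).
\]
For part~(2) the paper uses the nesting constant $d$ from Lemma~\ref{lemma: properties of three types of hyperplanes}(3), sets $M=(d+2)N$, and again inducts on $k$, using that $\phi^{d+1}(H^+)\subset\phi(H^+)\subset\phi^{-d}(H^+)$ to trap the endpoints of $\gamma$ in the correct halfspaces. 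Your approach instead linearises everything through the orbit quasi-isometry to $\bZ$: once the uniform transition window is established (and the finitely many $\la\phi\ra$-orbits of essential hyperplanes make this uniformity immediate), both parts reduce to interval arithmetic. The paper's route gives cleaner explicit constants tied to the hyperplane combinatorics used later, while yours is more portable to general quasilines.

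One remark: your parenthetical alternative for part~(2) is in fact the simplest argument of all, and it works for every $k\ge1$, not just large $k$, with no case split. If $\phi^{\epsilon k}(H)$ does not cross $\gamma$, then $\gamma$ lies in a single halfspace of $\phi^{\epsilon k}(H)$; whichever it is, one of the two portions of $\gamma$ on either side of $e_0$ (each of length $\ge kM$) lies in a set of the form $H^\pm\cap\phi^{\epsilon k}(H^\mp)$, whose diameter is $\le kN$ by part~(1) (using that $\diam(H^-\cap\phi^{\epsilon k}(H^+))=\diam(H^+\cap\phi^{-\epsilon k}(H^-))$). So $M>N$ already suffices. Also, in your main argument for~(2), the step from $\mathrm{cd}(y_+)>t+k+C$ to $y_+\in\phi^k(H^+)$ needs slightly more than stated: you need the nearest orbit point of $y_+$ to be at distance $>R$ from $N(\phi^kH)$, which requires $\mathrm{cd}(y_+)>t+k+C_1$ for a constant $C_1$ absorbing $R$ as well; this is achieved by enlarging $M$ accordingly, but it is worth making explicit.
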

\begin{proof}
For each essential hyperplane $H$ of $\tY$, because its two halfspaces in $\tY$ are deep, it separates two ends of $\tY$. By Remark \ref{remark: hyperbolic elements of quasiline}, $\phi$ preserves two ends of $\tY$. In particular, the subspaces $H^+\cap \phi(H^+)$ and $H^-\cap \phi(H^-)$ are noncompact and contain two ends of $\tY$ respectively. We claim that the subspaces $H^+\cap \phi(H^-)$ and $H^-\cap \phi(H^+)$ are both compact. Indeed, since $\diam(H)<\infty$, if either $H^+\cap \phi(H^-)$ or $H^-\cap \phi(H^+)$ is noncompact, it contains an additional end of $\tY$, which contradicts the fact that $\tY$ has two ends. See Figure~\ref{figure: three ends of quasiline} for the case where $H^+ \cap \phi(H^-)$ is noncompact.
\begin{figure}[h]
    \centering
    \includegraphics[width=0.75\linewidth]{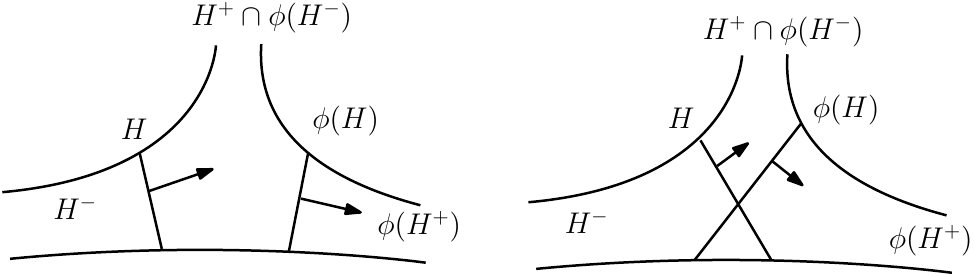}
    \caption{If the subspace $H^+ \cap \phi(H^-)$ is noncompact, then it contains an end of the quasiline.}
    \label{figure: three ends of quasiline}
\end{figure}

The cocompact action of $\la\phi\ra$ on $\tY$ induces an action on the set of essential hyperplanes of $\tY$ with finitely many orbits $\la\phi\ra\cdot H_1,\cdots, \la\phi\ra\cdot H_n$. Choose $D$ as in Lemma \ref{lemma: properties of three types of hyperplanes}(1). In particular, $\diam(N(H_i))\leq D+1$ for each $i$.
Let
\[
N = \max_{1\leq i\leq n} \left\{ \diam(H_i^-\cap \phi(H_i^+)),\ \diam(H_i^+\cap \phi(H_i^-)) \right\} + D + 1
\]
which is finite because $H_i^-\cap \phi(H_i^+)$ and $H_i^+\cap \phi(H_i^-)$ are both compact for each $1\leq i\leq n$.

We prove (1) by induction on $k$. When $k = 1$, our construction of $N$ gives
\[
\diam \left( H^+\cap \phi(H^-) \right)<N
\]
and 
\begin{align*}
\diam \left( H^+\cap \phi^{-1}(H^-) \right)
&= \diam \left( \phi\left(H^+\cap \phi^{-1}(H^-) \right)\right)\\
&= \diam \left( \phi\left(H^+\right)\cap H^- \right)<N.
\end{align*}
Suppose the statement holds for $k-1$. Because
\begin{align*}
H^+\cap \phi^k(H^-)
&= H^+\cap \phi^k(H^-) \cap \left( \phi^{k-1}(H^-)\cup \phi^{k-1}(N(H))\cup \phi^{k-1}(H^+) \right)\\
&\subset \left(H^+\cap \phi^{k-1}(H^-)\right) \cup \phi^{k-1}(N(H)) \cup \left( \phi^{k-1}(H^+) \cap \phi^{k}(H^-) \right)\\
&= \left(H^+\cap \phi^{k-1}(H^-)\right) \cup \phi^{k-1}(N(H)) \cup \phi^{k-1}\left( H^+ \cap \phi(H^-) \right),
\end{align*}
we have
\begin{align*}
\diam\left( H^+\cap \phi^k(H^-) \right)
&\leq \diam\left(H^+\cap \phi^{k-1}(H^-)\right) + \diam\left( \phi^{k-1}(N(H))\right)\\ 
&\quad + \diam\left( \phi^{k-1}\left( H^+ \cup \phi(H^-) \right) \right)\\
&\leq (k-1)N + (D+1) + \diam\left( H^+ \cup \phi(H^-) \right)\\
&\leq kN.
\end{align*}
Similarly, by exchanging $H^-$ and $H^+$ in the above, we have $\diam\left( H^-\cap \phi^k(H^+) \right)\leq kN$. Thus, 
\begin{align*}
\diam\left( H^+\cap \phi^{-k}(H^-) \right)
&= \diam\left(\phi^k\left( H^+\cap \phi^{-k}(H^-) \right)\right)\\
&= \diam\left(\phi^k\left( H^+\right)\cap H^- \right)\leq kN.
\end{align*}
This finishes the proof of (1). 

For (2), choose $d$ as in Lemma \ref{lemma: properties of three types of hyperplanes}(3). In particular, $H$ does not intersect $\phi^{d+1}(H)$ and $\phi^{-d}(H)$. Recall that since $\phi$ preserves two ends of $\tY$, by exchanging $H^+$ and $H^-$ if necessary, we may assume that $\phi^{d+1}(H^+)\subset H^+ \subset \phi^{-d}(H^+)$ as shown in Figure \ref{figure: essential hyperplanes separate long geodesics}. Let $M = (d+2)N$. We prove (2) by induction on $k$. When $k = 1$, by (1), we have
\[
\diam(H^+\cap \phi^{d+1}(H^-))
\leq (d+1)N,
\quad
\diam(H^-\cap \phi^{-d}(H^+))\leq dN.
\]
Because $e_0$ has distance $\geq M > (d+1)N$, two endpoints of $\gamma$ lie in $\phi^{d+1}(H^+)$ and $\phi^{-d}(H^-)$ respectively. Now consider the hyperplane $\phi(H)$. By Lemma \ref{lemma: properties of three types of hyperplanes}(3) and our choice of $d$, $\phi(H)$ does not intersect $\phi^{d+1}(H)$ and $\phi^{-d}(H)$. Hence, we have nested halfspaces
\[
\phi^{d+1}(H^+)\subset \phi(H^+) \subset \phi^{-d}(H^+).
\]
In particular, two ends of $\gamma$ are contained in $\phi^{d+1}(H^+)\subset \phi(H^+)$ and $\phi^{-d}(H^-)\subset \phi(H^-)$, and $\phi(H)$ crosses $\gamma$. A similar argument shows that $\phi^{-1}(H)$ crosses $\gamma$.
This proves (2) when $k=1$.

Suppose the statement holds for $k-1$. If $e_0$ has distance $\geq kM$ from two endpoints. Then $\phi^{k-1}(H)$ crosses $\gamma$ at an edge $e_{k-1}$. Furthermore, by (1), we have 
\[
d(e_0,e_k)
\leq \diam\left( H^+\cap \phi^{k-1}(H^-) \right)\leq (k-1)N < (k-1)M.
\]
Hence, $e_k$ has distance $> kM - (k-1)M = M$ from two endpoints of $\gamma$. By our argument in the case $k=1$, the hyperplane $\phi^k(H) = \phi(\phi^{k-1}(H))$ crosses $\gamma$. A similar argument shows that $\phi^{-k}(H)$ crosses $\gamma$. This finishes the induction and completes the proof of (2).

Finally, since the constants $D,d$ in Lemma \ref{lemma: characterization of three types of hyperplanes} depend only on the quasiline $(\tY,\phi)$, it follows from our construction of $N$ and $M$ that they also depend only on $(\tY,\phi)$.
Moreover, any constants $\bN\geq N$ and $\bM\geq M$ also apply to the above argument, and hence satisfy (1) and (2) respectively. 
\end{proof}
\begin{figure}[h]
    \centering
    \includegraphics[width=0.75\linewidth]{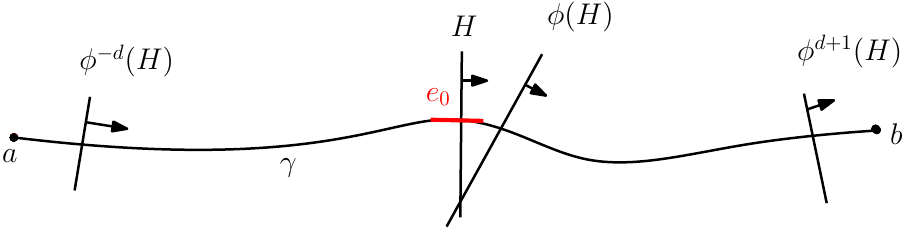}
    \caption{Four hyperplanes cross the combinatorial geodesic $\gamma$. The arrows denote halfspaces $\phi^{-d}(H^+), H^+, \phi(H^+), \phi^{d+1}(H^+)$. $\gamma$ crosses $H$ at an edge $e_0$, whose distance from each endpoint $a,b$ of $\gamma$ is $\geq M$.}
    \label{figure: essential hyperplanes separate long geodesics}
\end{figure}

\subsection{Convex subcomplexes in quasilines fellow-traveling with a geodesic}
\begin{lem}\label{lemma: property of single convex subcopmlex}
Let $X$ be a compact nonpositively curved cube complex and let $(\tY,\phi)$ be a quasiline in $\tX$. Suppose $H$ is an essential hyperplane in $\tY$ such that $H$ and $\phi(H)$ do not intersect and are not simultaneously crossed by any hyperplane of $\tY$.
Choose a vertex $a\in N(H)\pf \phi(N(H))$. Consider the convex subcomplex $C = \Hull\{a, \phi^2(a)\}$.
Then the following hold.
\begin{enumerate}
    \item Let $\gamma$ be a combinatorial geodesic in $\tY$ that crosses the hyperplanes $\phi^i(H)$ at edges $e_i$ for $i = 0,1,2,3$. Denote by $\gamma_{[e_i,e_j]}$ the subsegment of $\gamma$ between $e_i,e_j$. Then 
    \[
    \gamma_{[e_1,e_2]}\subset C\subset \Hull\{e_0,e_3\}.
    \]
    \item For any integer $n\geq 0$, we have $\cup_{i=0}^n\phi^i(C) = \Hull\{a,\phi^{n+2}(a)\}$. In particular, the subcomplex $\cup_{i=0}^n\phi^i(C)$ is convex.
\end{enumerate}
\end{lem}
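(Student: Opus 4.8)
The plan is to prove both parts by one recurring mechanism, whose slogan is: \emph{a gate vertex cannot be separated from a vertex of a combinatorial geodesic without forcing that geodesic to cross some hyperplane twice.}

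First I would normalize the picture. Since $H$ is essential it separates the two ends of the quasiline $\tY$, and by Remark~\ref{remark: hyperbolic elements of quasiline} $\phi$ is a hyperbolic isometry fixing both ends; as $H$ and $\phi(H)$ are disjoint, after relabelling $H^\pm$ we may assume the halfspaces are strictly nested, $\cdots\supsetneq\phi^{-1}(H^+)\supsetneq H^+\supsetneq\phi(H^+)\supsetneq\cdots$. Then the $\phi^i(H)$ are pairwise disjoint, each separates the two ends of $\tY$, and one records the ``positions'' $N(\phi^i H)\subseteq\phi^{i-1}(H)^+\cap\phi^{i+1}(H)^-$ and $\phi^i(a)\in N(\phi^i H)\cap\phi^i(H)^+\cap\phi^{i+1}(H)^-$. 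Orient $\gamma$ in (1) so it crosses $H,\phi H,\phi^2 H,\phi^3 H$ in this order at $e_0,e_1,e_2,e_3$. The key structural input is the claim that \emph{no hyperplane of $\tY$ crosses two distinct hyperplanes among the $\phi^i(H)$}: by $\phi$-equivariance this reduces to pairs $(H,\phi^k H)$ with $k\ge1$, the case $k=1$ is the hypothesis, and for $k\ge2$ a hyperplane $K$ crossing $H$ and $\phi^k H$ must — by the nesting $\phi^k(H^+)\subsetneq\phi(H^+)\subsetneq H^+$ — also cross $\phi(H)$ (otherwise $K$ lies in one halfspace of $\phi(H)$, and then $K$ is disjoint from $H^-$ or from $\phi^k(H^+)$, contradicting that it crosses $H$, resp.\ $\phi^k H$), contradicting the case $k=1$. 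From this I draw three consequences: (i) by the Bridge Theorem~\ref{Thm: bridge theorem}, $N(\phi^i H)\pf N(\phi^j H)$ is crossed by no hyperplane, hence is a single vertex, for $i\ne j$; (ii) if a hyperplane crosses $N(\phi^i H)$ it crosses no $N(\phi^j H)$ with $j\ne i$; and (iii) being the single point $N(\phi^i H)\pf N(\phi^{i+1}H)$, each $\phi^i(a)$ is the gate of $N(\phi^{i+1}H)$ in $N(\phi^i H)$, so it lies on the same side as $N(\phi^{i+1}H)$ of every hyperplane crossing $N(\phi^i H)$. (By (i) and the ``separation forces gate-composition'' trick used in the proof of Lemma~\ref{lemma: represent path stabilizer} one also gets $a=N(H)\pf N(\phi^k H)$ for all $k\ge1$, which is convenient in (2).)

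Now the mechanism. Let $p$ be one of the gate vertices $\phi^i(a)=N(\phi^i H)\pf N(\phi^{i+1}H)$, let $\delta$ be a combinatorial geodesic crossing $\phi^i H$ at an edge $f$ and $\phi^{i+1}H$ at an edge $f'$. Then for \emph{every} hyperplane $K$: if $K$ crosses $\phi^i H$, then by (ii) it misses $N(\phi^{i+1}H)$, so by (iii) $p$ and $f'$ lie on the same side of $K$; and if $K$ does not cross $\phi^i H$ (and $K\ne\phi^i H$), then by (ii) it misses both $N(\phi^i H)$ and $N(\phi^{i+1}H)$, so $p$, $f$, $f'$ all lie on the same side of $K$. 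Consequently, whenever $K$ separates $p$ from a vertex $x$ of $\delta$, it also separates from $x$ one of the edges $f,f'$ of $\delta$. That is all that is needed. For $\gamma_{[e_1,e_2]}\subseteq C$: take $u\in\gamma_{[e_1,e_2]}$ and suppose $K$ separates $u$ from both $a$ and $\phi^2(a)$; applying the mechanism with $p=a$ (edges $e_0,e_1$) and with $p=\phi^2(a)$ (edges $e_2,e_3$) shows $K$ separates $u$ from some edge of $\gamma$ lying weakly before $u$ and from some edge lying weakly after $u$, hence $K$ crosses the geodesic $\gamma$ twice — impossible (the cases where $u$ is an endpoint of $e_1$ or $e_2$ give an immediate contradiction instead). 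For $C\subseteq\Hull\{e_0,e_3\}$: since $\Hull\{e_0,e_3\}$ is convex and $C=\Hull\{a,\phi^2(a)\}$, it suffices to show $a,\phi^2(a)\in\Hull\{e_0,e_3\}$; a hyperplane witnessing $a\notin\Hull\{e_0,e_3\}$ cannot separate $e_0$ from $e_3$, hence leaves $e_0,e_1,e_2,e_3$ on one side, while the mechanism places $a$ on the side of $e_0$ — contradiction; likewise for $\phi^2(a)$.

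For (2) I would induct on $n$, the case $n=0$ being the definition of $C$. I first record the auxiliary fact $\phi^j(a)\in\Hull\{a,\phi^m(a)\}$ for $0\le j\le m$ — so that $\phi^iC=\Hull\{\phi^i a,\phi^{i+2}a\}\subseteq\Hull\{a,\phi^{n+2}a\}$ and hence $\bigcup_{i=0}^n\phi^iC\subseteq\Hull\{a,\phi^{n+2}a\}$: this is the mechanism applied with $p=\phi^j(a)$ to a geodesic $\delta$ from $a$ to $\phi^m(a)$ (which crosses $\phi^1 H,\dots,\phi^m H$); a hyperplane separating $\phi^j(a)$ from both endpoints of $\delta$ would separate an edge of $\delta$ from both endpoints, crossing $\delta$ twice. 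For the reverse inclusion, given $v\in\Hull\{a,\phi^{n+2}a\}$ I pick a geodesic $\delta$ through $v$ from $a$ to $\phi^{n+2}(a)$, crossing $\phi^1 H,\dots,\phi^{n+2}H$ at $f_1,\dots,f_{n+2}$. If $v$ lies on $\delta$ between $f_k$ and $f_{k+1}$ with $2\le k\le n$, then part (1) applied to $\phi^{k-1}H,\dots,\phi^{k+2}H$ with the geodesic $\delta$ gives $v\in\phi^{k-1}C$; the remaining (boundary) positions of $v$ — before $f_1$, after $f_{n+2}$, and the two adjacent segments — are handled directly by the mechanism (e.g.\ if $v$ lies between $a$ and $f_1$, supposing $K$ separates $v$ from $a$ and $\phi^2(a)$ again yields a double crossing of $\delta$, here using that $\delta$ also crosses $\phi^3 H$ since $n\ge1$). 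Hence $v\in\bigcup_{i=0}^n\phi^iC$ and the induction closes.

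The hard part will be the bookkeeping that makes the mechanism airtight: proving the ``no shared crossing'' claim and its three consequences, tracking which side of each $K$ a given carrier $N(\phi^i H)$ lies on, and dispatching the low-index and endpoint cases (when $u$ or $v$ sits inside one of the edges $e_i$ or $f_j$, or when the geodesic in (2) does not reach far enough for part (1) to apply verbatim). Once this setup is in place, the single principle that a combinatorial geodesic meets each hyperplane at most once does all the real work.
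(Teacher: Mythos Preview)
Your approach is correct and genuinely different from the paper's. Where the paper constructs an explicit alternate geodesic $\gamma'=e_0*\gamma_0*e_1*\gamma_1*e_2*\gamma_2*e_3$ passing through the points $a,\phi(a),\phi^2(a)$ (built from segments in the carriers and across the bridges), and then simply reads off $e_1,e_2\subset\gamma'_{[a,\phi^2(a)]}\subset C$ and $a,\phi^2(a)\in\gamma'_{[v_0,u_3]}\subset\Hull\{e_0,e_3\}$, you instead argue analytically by hyperplane separation: the single ``mechanism'' (gate vertices cannot be separated from a geodesic vertex without forcing a double crossing) does all the work in both (1) and (2). Your route avoids ever naming the second gate point $b=\phi(N(H))\pf N(H)$ or the auxiliary geodesic $\lambda$ that the paper uses in (2), at the price of more case analysis; the paper's route is more constructive and makes the ``shape'' of $C$ visible, but carries more notation. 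Both rely on the same underlying fact that no hyperplane simultaneously crosses two of the $\phi^i(H)$.

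One small repair: in your mechanism, the second bullet ``if $K$ does not cross $\phi^i H$ (and $K\ne\phi^i H$), then by (ii) it misses both $N(\phi^iH)$ and $N(\phi^{i+1}H)$'' is false as stated---$K$ might equal $\phi^{i+1}H$ or cross it. You need a third case: if $K$ crosses $N(\phi^{i+1}H)$ (but not $N(\phi^iH)$), then $p,f\subset N(\phi^iH)$ lie on the same side of $K$, so $K$ separating $p$ from $x$ forces $K$ to separate $f$ from $x$. With this added, the conclusion of the mechanism is unchanged and everything downstream goes through. Also note that your ``induction on $n$'' in (2) never actually invokes the inductive hypothesis---the argument is direct for every $n\ge1$---so you can simply drop the inductive framing.
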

\begin{proof}
Because $H$ and $\phi(H)$ do not intersect and are not simultaneously crossed by any hyperplane, the Bridge Theorem~\ref{Thm: bridge theorem} implies that the projection $N(H) \pf \phi(N(H))$ is a singleton. 
So we must have $\{a\} = N(H)\pf \phi(N(H))$. Denote the other projection by $\{b\} = N(H)\pf \phi(N(H))$. 
By Remark \ref{remark: hyperbolic elements of quasiline}, $\phi$ reserves two ends of $\tY$. So either $\phi(H^+)\subset H^-$ or $\phi(H^+)\subset H^-$. Exchange $H^+$ and $H^-$ if necessary, we may assume that $\phi(H^+)\subset H^+$.
Then we have nested halfspaces $\phi^3(H^+)\subset \phi^2(H^+) \subset \phi(H^+)\subset H^+$.
Let endpoints of $e_i$ in $\phi^i(H)^-$ and $\phi^i(H)^+$ be $u_i$ and $v_i$ respectively, and the endpoints $u_i,v_i$ alternate along $\gamma$. See Figure \ref{figure: subsegment contained in C}(A).
\begin{figure}[h]
\begin{subfigure}{0.45\textwidth}
    \includegraphics[width=\linewidth]{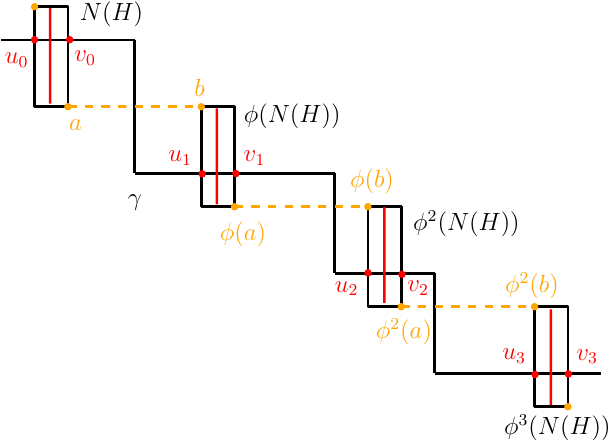}
    \caption{}
    \label{figure: subsegment contained in C(a)}
\end{subfigure}
\begin{subfigure}{0.45\textwidth}
    \includegraphics[width=\linewidth]{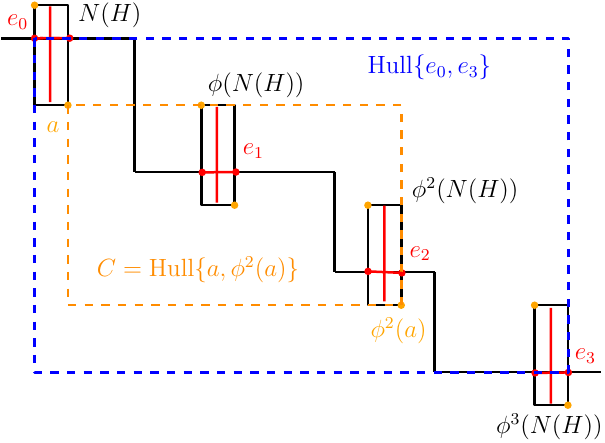}
    \caption{}
    \label{figure: subsegment contained in C(b)}
\end{subfigure}
    \caption{The combinatorial geodesic $\gamma$ crosses four hyperplanes. Points $u_i, v_i$ alternate along $\gamma$ as shown in (A). The two convex subcomplexes $C = \Hull\{a,\phi^2(a)\}$ and $\Hull\{e_0,e_3\}$ are illustrated as the orange and blue regions in (B).}
    \label{figure: subsegment contained in C}
\end{figure}

To prove (1), it suffices to show that $e_1,e_2\subset C$. We first consider the subsegment $\gamma_{[v_{i}, u_{i+1}]}$ from $v_{i}$ to $u_{i+1}$ for $i = 0,1,2$. Choose following combinatorial geodesics (the choices may not be unique):
\begin{itemize}
    \item a combinatorial geodesic $[v_{i},\phi^{i}(a)]$ from $v_{i}$ to $\phi^{i}(a)$,
    \item a combinatorial geodesic $[\phi^{i}(a),\phi^i(b)]$ from $\phi^i(a)$ to $\phi^i(b)$,
    \item a combinatorial geodesic $[\phi^i(b), u_{i+1}]$ from $\phi^i(b)$ to $u_{i+1}$.
\end{itemize}
Note that, $[v_i, \phi^i(a)]$ crosses hyperplanes intersecting $\phi^i(H)$, $[\phi^{i}(a),\phi^i(b)]$ crosses hyperplanes separating $\phi^i(H)$ and $\phi^{i+1}(H)$ (by Bridge Theorem \ref{Thm: bridge theorem}), and $[\phi^i(b), u_{i+1}]$ crosses hyperplanes intersecting $\phi^{i+1}(H)$.
Since $\phi^i(H)$ and $\phi^{i+1}(H)$ are not simultaneously crossed by any hyperplane, the above three combinatorial geodesics cross distinct hyperplanes. 
Thus, their concatenation is a combinatorial geodesic from $v_i$ to $u_{i+1}$ given by
\[
\gamma_i = [v_{i},\phi^{i}(a)]*[\phi^{i}(a),\phi^i(b)]*[\phi^i(b), u_{i+1}].
\]
As a result, we obtain a pair of combinatorial geodesics $\gamma_{[v_i, u_{i+1}]}$ and $\gamma_i$ with the same endpoints. In particular, they cross the same family of hyperplanes, namely those separating $v_i$ and $u_{i+1}$. Because the combinatorial geodesic $\gamma_{[u_0,v_3]}$ is a concatenation
\[
\gamma_{[u_0,v_3]}
= e_0 * \gamma_{[v_0,u_1]} * e_1 * \gamma_{[v_1,u_2]} * e_2 * \gamma_{[v_2,u_3]} * e_3.
\]
Replacing $\gamma_{[v_i,u_{i+1}]}$ by $\gamma_i$, we obtain a path from $u_0$ to $v_3$
\[
\gamma' = e_0 * \gamma_0 * e_1 * \gamma_1 * e_2 * \gamma_2 * e_3,
\]
which crosses the same family of hyperplanes as $\gamma_{[u_0,v_3]}$.
So $\gamma'$ is also a combinatorial geodesic. In particular, the subsegment $\gamma'_{[a,\phi^2(a)]}$ from $a$ to $\phi^2(a)$ is a combinatorial geodesic. Thus,
\begin{equation}\label{e_1,e_2 are in C}
e_1,e_2 \subset \gamma'_{[a,\phi^2(a)]} \subset \Hull\{a, \phi^2(a)\} = C.
\end{equation}
Similarly, since $a,\phi^2(a)$ are contained in the subsegment $\gamma'_{[v_0, u_3]}$, we have
\begin{equation}\label{a is in hull}
a,\phi^2(a)\in \gamma'_{[v_0, u_3]}\subset \Hull\{e_0,e_3\}.
\end{equation}
Since $C$ and $\Hull\{e_0,e_3\}$ are convex, \eqref{e_1,e_2 are in C} and \eqref{a is in hull} together imply that
\[
\gamma_{[e_1,e_2]} 
\subset C = \Hull\{a, \phi^2(a)\} 
\subset \Hull\{e_0,e_3\}.
\]
This proves (1). We illustrate the above in Figure \ref{figure: subsegment contained in C}(B).

Before proving (2), we make the following observation. Consider the combinatorial geodesic from $a$ to $\phi(a)$ given by
\[
\lambda = \gamma'_{[a,\phi(a)]} = [a,b]*[b,u_1]*e_1*[v_1,\phi(a)].
\]
For each hyperplane crossing $\lambda$, it either separates $H$ and $\phi(H)$ (if it crosses $[a,b]$), or intersects $\phi(H)$ (if it crosses $[b,u_1]*e_1*[v_1,\phi(a)]$). Because $H,\phi(H)$ do not intersect and are not simultaneously crossed by any hyperplane, all hyperplanes crossing $\lambda$ are contained in $H^+$. Consequently, $\phi^{k}(\lambda)$ only crosses hyperplanes contained in $\phi^k(H^+)$. On the other hand, any combinatorial geodesic $[a,\phi^k(a)]$ from $a$ to $\phi^k(a)$ crosses hyperplanes that either intersect $\phi^k(H)$ or are contained in $\phi^k(H^-)$. Therefore, it can always be extended to a longer combinatorial geodesic $[a,\phi^k(a)]* \phi^k(\lambda)$ from $a$ to $\phi^{k+1}(a)$.

Now we prove $\bigcup_{i=0}^n\phi^i(C) = \Hull\{a,\phi^{n+2}(a)\}$ in (2) by induction on $n$. When $n=0$, this holds since $C = \Hull\{a,\phi^{2}(a)\}$. Suppose $\cup_{i=0}^{n-1}\phi^i(C) = \Hull\{a,\phi^{n+1}(a)\}$ for $n\geq 1$. Since any combinatorial geodesic $[a,\phi^{n+1}(a)]$ can be extended to a combinatorial geodesic $[a,\phi^{n+1}(a)]*\phi^{n+1}(\lambda)$ from $a$ to $\phi^{n+1}(a)$, we have 
\begin{equation}\label{first segment}
\Hull\{a,\phi^{n+1}(a)\}\subset \Hull\{a,\phi^{n+2}(a)\}.
\end{equation}
Moreover, because $\bigcup_{k=0}^{n+1}\phi^k(\lambda)$ is a combinatorial geodesic from $a$ to $\phi^{n+2}(a)$ that contains $\phi^{n}(a)$, we have $\phi^{n}(a)\in \Hull\{a,\phi^{n+2}(a)\}$. Thus, 
\begin{equation}\label{second segment}
\phi^n(C) = \Hull\{\phi^n(a), \phi^{n+2}(a)\}\subset \Hull\{a,\phi^{n+2}(a)\}.
\end{equation}
Combining \eqref{first segment} and \eqref{second segment}, we obtain
\begin{equation}\label{equation: geodesic one direction}
\bigcup_{i=0}^n\phi^i(C) 
= \left(\bigcup_{i=0}^{n-1}\phi^i(C)\right) \cup \phi^n(C)
= \Hull\{a,\phi^{n+1}(a)\} \cup \phi^n(C)
\subset \Hull\{a,\phi^{n+2}(a)\}.
\end{equation}
Conversely, exchange $H^+$ and $H^-$ if necessary, we may assume $\phi(H^+)\subset H^+$. So we have nested halfspaces
\[
H^+ \supset \phi(H^+) \supset \phi^2(H^+)\supset \cdots\supset \phi^{n+2}(H^+).
\]
Let $\gamma$ be an arbitrary combinatorial geodesic from $a$ to $\phi^{n+2}(a)$.
Since $a\in H^+\setminus \phi(H^+)$ and $\phi^{n+2}(a)\in \phi^{n+2}(H^+)$, $\gamma$ must cross each $\phi^i(H)$ at an edge $e_i$ for $1\leq i\leq n+2$. Let $e_0$ be the edge dual to $H$ with endpoint $a$. Then $e_0*\gamma$ is a combinatorial geodesic crossing $H, \phi(H),\dots, \phi^{n+2}(H)$ as shown in Figure \ref{figure: extended combinatorial geodesic}. Because $n\geq 1$, $\gamma$ crosses four hyperplanes $\phi^{n-1}(H),\phi^{n}(H),\phi^{n+1}(H),\phi^{n+2}(H)$.
Then, by (1), the subsegment $\gamma_{[e_n,e_{n+1}]}$ is contained in $\phi^{n-1}(C)$. In particular,
\begin{equation}\label{e_n+1}
e_{n+1}\subset \phi^{n-1}(C)= \Hull\{a,\phi^{n+1}(a)\}.
\end{equation}
Alternatively, the extended combinatorial geodesic $\gamma*\phi^{n+2}(\lambda)$ crosses four hyperplanes $\phi(H),\dots, \phi^{n+3}(H)$ (see Figure \ref{figure: extended combinatorial geodesic}). By (1), the subsegment $\gamma_{[e_{n+1},e_{n+2}]}$ is contained in $\phi^{n}(C)$. In particular, $e_{n+1}\subset \phi^{n}(C)$. Combining this and \eqref{e_n+1}, we have
\[
\gamma 
= \gamma_{[a,e_{n+1}]}\cup \gamma_{[e_{n+1}, \phi^{n+2}(a)]}
\subset \Hull\{a,\phi^{n+1}(a)\}\cup \phi^n(C)
= \bigcup_{i=0}^n\phi^i(C).
\]
Since $\gamma$ is an arbitrary combinatorial geodesic from $a$ to $\phi^{n+2}(a)$, this implies that 
\begin{equation}\label{equation: geodesic other direction}
\Hull\{a,\phi^{n+2}(a)\}\subset \bigcup_{i=0}^n\phi^i(C).
\end{equation}
We thus conclude from \eqref{equation: geodesic one direction} and \eqref{equation: geodesic other direction} that $\bigcup_{i=0}^n\phi^i(C) = \Hull\{a,\phi^{n+2}(a)\}$. This finishes the induction and completes the proof of (2).
\end{proof}
\begin{figure}[h]
    \centering
    \includegraphics[width=0.75\linewidth]{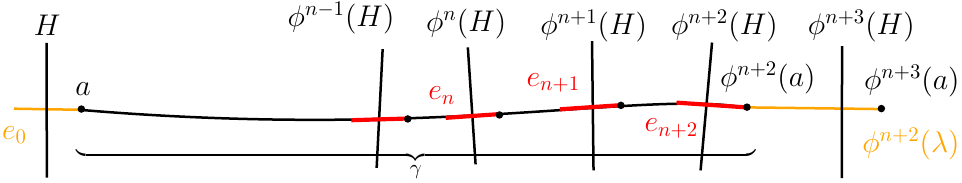}
    \caption{The combinatorial geodesic $\gamma$ can be extended to $e_0*\gamma$ and $\gamma*\phi^{n+1}(\lambda)$.}
    \label{figure: extended combinatorial geodesic}
\end{figure}

\begin{prop}\label{proposition}
Let $X$ be a compact nonpositively curved cube complex and let $(\tY,\phi)$ be a quasiline in $\tX$. 
Then there exist a constant $B_0$ and an integer $n$, which depend only on the quasiline $(\tY,\phi)$, such that for any constant $B\geq B_0$, the following holds:

Let $W$ be a convex subcomplex of $\tY$, and let $\gamma \subset W$ be a combinatorial geodesic that does not cross any trivial hyperplanes of $\tY$.
If $\diam(\gamma)\geq 3B$, then there exist a convex subcomplex $C\subset \tY$ and an integer $l$ such that 
\begin{enumerate}
    \item $\gamma$ contains a subsegment $\hgamma$ with length $\diam(\hgamma)> B$ such that
    \[
    \hgamma\subset C\cup \phi^n(C)\cup \cdots \cup \phi^{nl}(C)\subset W;
    \]
    \item the pair $\left(\bigcup_{k\in \bZ}\phi^{nk}(C), \phi^n\right)$ is a quasiline in $\tX$.
\end{enumerate}
\end{prop}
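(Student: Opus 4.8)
The plan is to produce the convex subcomplex $C$ and the integer $l$ by locating, inside the long geodesic $\gamma$, a long stretch that crosses a controlled essential hyperplane together with many of its $\phi$-translates, and then invoking Lemma~\ref{lemma: property of single convex subcopmlex}. First I would fix the constants coming from the earlier lemmas: take $D,K,d$ as in Lemma~\ref{lemma: properties of three types of hyperplanes} and $N,M$ as in Lemma~\ref{lemma: essential hyperplane crosses long geodesic}, and set $n = d+1$ (or whatever multiple of $d$ is needed so that $\phi^n$ separates an essential hyperplane from its translate as in Lemma~\ref{lemma: property of single convex subcopmlex}). The key point is that $\gamma$ is assumed to cross no trivial hyperplanes, so by Lemma~\ref{lemma: properties of three types of hyperplanes}(4) every hyperplane $\gamma$ crosses has diameter $\leq D$; in particular $\gamma$ crosses essential or half-essential hyperplanes only, and since $\gamma$ is a geodesic in the quasiline it is a quasi-isometric embedding of an interval into $\tY\cong_{\mathrm{q.i.}}\bZ$, so its length is controlled by the number of hyperplanes it crosses.

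Next I would extract a good essential hyperplane. Since there are only finitely many half-essential hyperplane orbits and each has a compact shallow halfspace of diameter $\leq K$ (Lemma~\ref{lemma: properties of three types of hyperplanes}(2)), only boundedly many of the hyperplanes crossed by $\gamma$ near its "middle" can be half-essential. So if $\diam(\gamma)\geq 3B$ with $B\geq B_0$ chosen large enough (in terms of $K$, $M$, $N$, $D$, $n$, and the cocompactness constant of $\la\phi\ra$ acting on $\tY$), there is an essential hyperplane $H$ crossing $\gamma$ at an edge $e_0$ whose distance from both endpoints of $\gamma$ is at least, say, $(nl+2)M$ for a suitable $l$ with $nl$ growing linearly in $B$. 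By Lemma~\ref{lemma: essential hyperplane crosses long geodesic}(2), all of $\phi^{n}(H),\phi^{2n}(H),\dots,\phi^{(nl+3)n}(H)$ then cross $\gamma$ as well, and by our choice of $n\geq d+1$ consecutive translates $\phi^{jn}(H),\phi^{(j+1)n}(H)$ are disjoint and not simultaneously crossed by any non-trivial hyperplane of $\tY$ — hence, since $\gamma$ crosses no trivial hyperplanes, not simultaneously crossed by any hyperplane relevant to $\gamma$. Applying Lemma~\ref{lemma: property of single convex subcopmlex} with this $H$ (and replacing $\phi$ by $\phi^n$, $H$ by the appropriate translate) produces the convex subcomplex $C = \Hull\{a,\phi^{2n}(a)\}$ for a vertex $a\in N(\phi^{jn}(H))\pf\phi^n(N(\phi^{jn}(H)))$, and part (2) of that lemma shows $\bigcup_{i=0}^{l}\phi^{ni}(C) = \Hull\{a,\phi^{n(l+2)}(a)\}$ is convex and contains a subsegment $\hgamma$ of $\gamma$ (namely the portion of $\gamma$ between the crossings with $\phi^{n}(H)$ and $\phi^{nl}(H)$, which lies in this hull by repeated application of Lemma~\ref{lemma: property of single convex subcopmlex}(1)).

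For part (1) of the Proposition I then need $\diam(\hgamma) > B$ and $\hgamma\subset C\cup\dots\cup\phi^{nl}(C)\subset W$. The length bound follows because $\hgamma$ crosses roughly $nl$ of the translates $\phi^{jn}(H)$, which are distinct and pairwise disjoint, and $nl$ is linear in $B$; choosing $B_0$ large absorbs the additive errors ($D$, $K$, $M$, $N$). The inclusion $\hgamma\subset\bigcup_i\phi^{ni}(C)$ is exactly Lemma~\ref{lemma: property of single convex subcopmlex}(1) applied along $\gamma$, and the inclusion $\bigcup_i\phi^{ni}(C)\subset W$ holds because $W$ is convex and contains the endpoints of the relevant subsegment of $\gamma$ (each $\Hull$ of two points of $\gamma\subset W$ is in $W$). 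For part (2), $\bigl(\bigcup_{k\in\bZ}\phi^{nk}(C),\phi^n\bigr)$ is a quasiline: $\phi^n$ is a nontrivial element of $\pi_1X$, it obviously acts on $\bigcup_k\phi^{nk}(C)$, this set is convex (it is an increasing union of the convex sets $\Hull\{\phi^{-nm}(a),\phi^{nm}(a)\}$ by Lemma~\ref{lemma: property of single convex subcopmlex}(2) applied in both directions), and $\la\phi^n\ra$ acts cocompactly on it since a fundamental domain is contained in the finite-diameter set $C$. The main obstacle I anticipate is the bookkeeping in the second paragraph: making precise how large $B_0$ must be so that, after discarding the boundedly many half-essential crossings and leaving margin $\sim nlM$ at each end of $\gamma$ for Lemma~\ref{lemma: essential hyperplane crosses long geodesic}(2), one still has a linear-in-$B$ supply of essential translates $\phi^{jn}(H)$ crossing $\gamma$ — this requires comparing combinatorial distance along $\gamma$ with hyperplane counts, using the uniform bound $\diam(H)\le D$ and the cocompactness of the $\la\phi\ra$-action to control how many $\phi$-orbit representatives appear per unit length.
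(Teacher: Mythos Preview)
Your overall strategy matches the paper's, but there is a genuine gap in how you invoke Lemma~\ref{lemma: property of single convex subcopmlex}. That lemma requires that $H$ and $\phi^n(H)$ are not simultaneously crossed by \emph{any} hyperplane of the ambient quasiline, not merely by hyperplanes ``relevant to $\gamma$''. By Lemma~\ref{lemma: properties of three types of hyperplanes}(3), the translates $H$ and $\phi^n(H)$ \emph{are} in general simultaneously crossed by the trivial hyperplanes of $\tY$. The fact that $\gamma$ avoids trivial hyperplanes does not help: the proof of Lemma~\ref{lemma: property of single convex subcopmlex} uses that $N(H)\pf N(\phi^n(H))$ is a single vertex, and this fails as soon as a trivial hyperplane crosses both carriers. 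With your choice $n=d+1$, you therefore cannot apply the lemma in $\tY$, and the construction of $C$ and the verification of the inclusions $\hgamma\subset\bigcup_i\phi^{ni}(C)\subset W$ break down.

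The paper repairs this by first passing to the convex ``sector'' $H_1^{\epsilon_1}\cap\cdots\cap H_h^{\epsilon_h}$ cut out by the halfspaces of the $h$ trivial hyperplanes that contain $\gamma$, and working there with the restricted hyperplane $\bar H$. Inside this sector, $\bar H$ and $\phi^n(\bar H)$ are crossed by no common hyperplane, so Lemma~\ref{lemma: property of single convex subcopmlex} applies. For this one needs $\phi^n$ to preserve the sector, which is why the paper takes $n = 2d\cdot h!$ (so that $\phi^n$ fixes each trivial hyperplane and its halfspaces) rather than $n = d+1$. Once you make this adjustment, the rest of your outline---locating an essential $H$ in the middle third, using Lemma~\ref{lemma: essential hyperplane crosses long geodesic}(2) to guarantee enough crossings, and the increasing-union argument for convexity in part~(2)---goes through and matches the paper's proof.
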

\begin{rmk}
We say that the convex subcomplex $C$ in $(\tY,\phi)$ \textit{fellow-travels with the combinatorial geodesic} $\gamma$. The intuition behind this is that the union of translates $\bigcup_{k=0}^l \phi^{nk}(C)$ of $C$ is sufficiently thin and long in the following sense:
\begin{itemize}
    \item The subcomplex $\bigcup_{k=0}^l \phi^{nk}(C)$ is sufficiently long and contains a long subsegment $\hgamma$ of $\gamma$ with length $\diam(\hgamma) > B$.
    \item The subcomplex $\bigcup_{k=0}^l \phi^{nk}(C)$ is sufficiently thin so that it is still contained in $W$.
\end{itemize}
\end{rmk}

\begin{proof}[Proof of Proposition \ref{proposition}]
By Lemma \ref{lemma: properties of three types of hyperplanes}, the quasiline $\tY$ has finitely many trivial hyperplanes, say $h$.
Choose $D,K,d$ as in Lemma \ref{lemma: properties of three types of hyperplanes} and $M,N$ as in Lemma \ref{lemma: essential hyperplane crosses long geodesic}.
Let $n = 2d\cdot h!$ and
\[     
B_0 = \max\left\{
K+1, 2nM, 
\llf\frac{2nN+D}{n\lv\phi\rv} + 2\rrf nM
\right\}.
\]
Since constants $D,K,d, M,N$ depend only on the quasiline $(\tY,\phi)$, the constants $B_0,n$ also depend only on $(\tY,\phi)$.
We prove the statements through four steps. The inequality $B\geq B_0\geq K+1$ will be used in Step 1 to find an essential hyperplane $H$. The inequality $B\geq B_0\geq 2nM$ will be used in Step 2 to construct $\hgamma$. Finally, $B\geq B_0\geq \llf\frac{2nN+D}{n\lv\phi\rv} + 2\rrf nM $ will be used in Step 4 to verify the inequality $\diam(\hgamma)>B$.

\medskip

\noindent
\textbf{Step 1.} \textit{Finding an essential hyperplane $H$ crossing $\gamma$}.
Choose an edge $e_0$ in the middle $1/3$ of $\gamma$. Let $H$ be the hyperplane crossing $e_0$. Since $\gamma$ does not intersect trivial hyperplanes, $H$ is either essential or half-essential.
Because $\gamma$ has length $\diam(\gamma)\geq 3B$ and $e_0$ lies in the middle $1/3$, $e_0$ has distance $\geq B$ from two endpoints of $\gamma$. Thus, two halfspaces of $H$ have diameter $\geq B\geq K+1$. By Lemma \ref{lemma: properties of three types of hyperplanes}(2) and our choice of $K$, the hyperplane $H$ is essential.

\medskip

\noindent
\textbf{Step 2.} \textit{Constructing the convex subcomplex $C$ and subsegment $\hgamma$.}
Let $h$ trivial hyperplanes of $\tY$ be $H_1,\cdots, H_h$. 
The action of $\la \phi \ra$ on $\tY$ induces a permutation of them. Thus, $\phi^{2\cdot h!}$ preserves each trivial hyperplane $H_i$ and its halfspaces $H_i^{\pm}$. In particular, each intersection of halfspaces $H_1^{\epsilon_1}\cap \cdots \cap H_h^{\epsilon_h}$ is preserved by $\phi^{2\cdot h!}$, where $H_i^{\epsilon_i}$ denotes a choice of halfspaces $H_i^{\pm}$ of $H_i$.

Because $\gamma$ does not intersect trivial hyperplanes, $\gamma\subset H_1^{\epsilon_1}\cap \cdots \cap H_h^{\epsilon_h}$ for some intersection.
Consider the restriction $\bH = H\cap (H_1^{\epsilon_1}\cap \cdots \cap H_h^{\epsilon_h})$ of $H$ as a hyperplane in $H_1^{\epsilon_1}\cap \cdots \cap H_h^{\epsilon_h}$. Note that $\bH$ is not crossed by any trivial hyperplane. Moreover, since $\phi^{2\cdot h!}$ acts on the subspace $H_1^{\epsilon_1}\cap \cdots \cap H_h^{\epsilon_h}$, the hyperplane $\phi^{2k\cdot h!}(\bH)$ also lies in $H_1^{\epsilon_1}\cap \cdots \cap H_h^{\epsilon_h}$ for any integer $k$. 
Denote by $N(\bH)$ the carrier of $\bH$ in $H_1^{\epsilon_1}\cap \cdots \cap H_h^{\epsilon_h}$. Recall that we take $n = 2d\cdot h!$. In particular, $\phi^{n}(\bH)\subset H_1^{\epsilon_1}\cap \cdots \cap H_h^{\epsilon_h}$.

Recall that $d$ is chosen as in Lemma \ref{lemma: properties of three types of hyperplanes}. Since $n = 2d\cdot h!> d$, $H$ and $\phi^n(H)$ do not intersect and are only simultaneously crossed by trivial hyperplanes. So $\bH$ and $\phi^{n}(\bH)$ do not intersect and are not simultaneously crossed by any hyperplane.
It then follows from Bridge Theorem~\ref{Thm: bridge theorem} that the projection $N(\bH)\pf N(\phi^{n}(\bH)) = \{a\}$ is a singleton.

Let $q\leq 0$ be the minimum integer $k$ for which $\phi^{kn}(H)$ crosses $\gamma$, and $p\geq 0$ be the maximum. We claim that $p,-q\geq 2$. In fact, recall that $e_0$ lies in the middle $1/3$ of $\gamma$, and has distance $\geq B\geq 2(2dM\cdot h!) = 2nM$ from two endpoints of $\gamma$. 
By Lemma~\ref{lemma: essential hyperplane crosses long geodesic} and our choice of $M$, we have $p,-q\geq \llf\frac{B}{nM}\rrf\geq 2$.

Now, let $C$ be the convex subcomplex of $H_1^{\epsilon_1}\cap \cdots \cap H_h^{\epsilon_h}$ defined by
\[
C = \Hull\{\phi^{qn}(a), \phi^{(q+2)n}(a)\}.
\]
For each $q\leq k\leq p$, let $e_k$ be the edge of $\gamma$ that crosses $\phi^{kn}(H)$. Denote by $\gamma_{[e_i, e_j]}$ the subsegment of $\gamma$ from $e_i$ to $e_{j}$.
Then take
\[
\hgamma = \gamma_{[e_{q+1},e_{p-1}]}.
\] 
We illustrate our construction of $C$ and $\hgamma$ in Figure \ref{figure: periodic convex subcomplex and geodesic}.
\begin{figure}[h]
    \centering
    \includegraphics[width=0.75\linewidth]{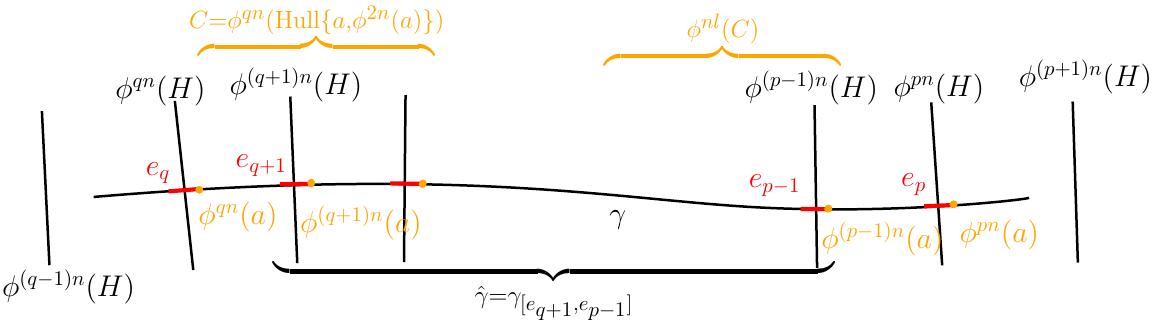}
    \caption{The combinatorial geodesic $\gamma$ crosses hyperplanes $\phi^{qn}(H),\cdots, \phi^{pn}(H)$. The subsegments $\hgamma$ is illustrated by the black brace. The convex subcomplex $C$ and its last translation $\phi^{nl}(C)$ are illustrated by orange braces.}
    \label{figure: periodic convex subcomplex and geodesic}
\end{figure}

\medskip

\noindent
\textbf{Step 3.} \textit{Verifying inclusions and convexity.} 
By Lemma \ref{lemma: property of single convex subcopmlex}(1), for each $q\leq k\leq p-3$, we have
\[
\hgamma_{[e_{k+1}, e_{k+2}]}
\subset \phi^{k-q}(C) = \Hull\{\phi^{kn}(a), \phi^{(k+2)n}(a)\}
\subset \Hull\{e_k, e_{k+3}\}.
\]
Thus,
\[
\hgamma 
= \bigcup_{i=q+1}^{p-2}\hgamma_{[e_{i}, e_{i+1}]}
\subset \bigcup_{k=0}^{p-q-3} \phi^{kn}(C)
\subset \bigcup_{j=q}^{p-3} \Hull\{e_{j}, e_{j+3}\}
\subset \Hull(\gamma)\subset W.
\]
This proves the inclusions in (1) with $l = p-q-3$. 

To prove (2), note that by Lemma~\ref{lemma: property of single convex subcopmlex}(2), the subcomplex $\bigcup_{k=0}^m\phi^{nk}(C)$ is convex for each integer $m$.
Thus, the subcomplex
\[
\bigcup_{k\in \bZ}\phi^{nk}(C)
= \bigcup_{m=1}^\infty\bigcup_{k= -m}^{m}\phi^{nk}(C)
= \bigcup_{m=1}^\infty\left(\phi^{-m}\left(\bigcup_{k= 0}^{2m}\phi^{nk}(C)\right)\right)
\]
is an increasing union of convex subcomplexes, and hence is convex. By our construction, the subcomplex $C$ is compact. So the group $\la\phi^n\ra$ acts cocompactly on $\bigcup_{k\in \bZ}\phi^{nk}(C)$, and the pair $\left(\bigcup_{k\in \bZ}\phi^{nk}(C), \phi^k \right)$ is a quasiline.

\medskip

\noindent
\textbf{Step 4.} \textit{Verifying the inequality.} 
Finally, we prove $\diam(\hgamma)> B$. Because $\diam(\gamma)\geq 3B$, it suffices to show that $\diam(\hgamma)\geq \diam(\gamma)/2$. 

Recall that $D$ is chosen as in Lemma \ref{lemma: properties of three types of hyperplanes}(1).
Because hyperplanes $\phi^{(q+1)n}(H)$ and $\phi^{(p-1)n}(H)$ cross $\gamma$ at edges $e_{q+1}$ and $e_{p-1}$, the length of $\hgamma = \gamma_{[e_{q+1},e_{p-1}]}$ satisfies
\begin{align*}
\diam\left( \hgamma \right)
&\geq ((p-1)n - (q+1)n) \lv\phi\rv - 2\diam(H)\\
&\geq ((p-1) - (q+1)) n\lv\phi\rv - 2D\\
&= (p-q-2)n\lv\phi\rv - 2D.
\end{align*}
On the other hand, because $p,q$ are minimum and maximum integers $k$ for which $\phi^{kn}(H)$ crosses $\gamma$,
the complement of $\hgamma$ in $\gamma$ satisfies (where we exchange $H^+$ and $H^-$ if necessary)
\[
\gamma\setminus \hgamma
\subset (\phi^{(q-1)n}(H^+)\cap \phi^{(q+1)n}(H^-))\cup (\phi^{(p-1)n}(H^+)\cap \phi^{(p+1)n}(H^-)).
\]
By Lemma \ref{lemma: essential hyperplane crosses long geodesic} and our choice of $N$, we have
\begin{align*}
\diam(\gamma\setminus \hgamma)
&\leq \diam\left((\phi^{(q-1)n}(H^+)\cap \phi^{(q+1)n}(H^-))\cup (\phi^{(p-1)n}(H^+)\cap \phi^{(p+1)n}(H^-))\right)\\
&\leq ((q-1) - (q+1))nN + ((p+1)-(p-1))nN\\
&\leq 4nN.
\end{align*}
Thus,
\[
\frac{\diam(\hgamma)}{\diam(\gamma\setminus \hgamma)}
\geq \frac{n(p-q-2)\lv\phi\rv - 2D}{4nN}.
\]
Because $e_0$ has distance $\geq B\geq \llf\frac{2nN+D}{n\lv\phi\rv} + 2\rrf nM$ from two endpoints of $\gamma$, Lemma \ref{lemma: essential hyperplane crosses long geodesic}(2) and our choice of $M$ imply that $p,-q \geq \llf\frac{B}{nM}\rrf\geq \llf\frac{2nN+D}{n\lv\phi\rv} + 2\rrf$. Therefore,
\[
\frac{\diam(\hgamma)}{\diam(\gamma\setminus \hgamma)}
\geq \frac{\left(2\llf\frac{2nN+D}{n\lv\phi\rv} + 2\rrf-2\right)\cdot n\lv\phi\rv - 2D}{4nN}
\geq \frac{\left(2\left(\frac{2nN+D}{n\lv\phi\rv} + 1\right)-2\right)\cdot n\lv\phi\rv - 2D}{4nN}
= 1.
\]
This shows that the subsegment $\hgamma$ contains more than half of edges of $\gamma$. Consequently, 
\[
\diam(\hgamma)
\geq \frac{\diam(\gamma)}{2}
\geq \frac{3B}{2} > B.
\]
This completes the proof.
\end{proof}

\begin{rmk}\label{remark: convex subcomplex C already gives essential hyperplane}
Note that the convex subcomplex $C$ is obtained in Step 2 via an essential hyperplane $H$ of the quasiline $\tY$, where $a\in N(H)\cap C$. Thus, when we apply Proposition \ref{proposition} to a combinatorial geodesic $\gamma$ in a quasiline, we automatically get an essential hyperplane $H$ in $\tY$ crossing $\widehat{\gamma}$, whose carrier $N(H)$ intersects $C$.
\end{rmk}
\begin{rmk}\label{remark: l only depends on B}
The construction of $C$ in Proposition \ref{proposition} depends only on the geometry of quasiline $(\tY,\phi)$, and the number $l$ of its translations mainly depends on $B$. In fact, by Lemma \ref{lemma: essential hyperplane crosses long geodesic} and our construction of integers $l,p,-q$ in the proof, we have
\[
l = p-q-3
\geq 2\llf\frac{B}{nM}\rrf-3
> 2\frac{B}{nM}-5,
\]
where $M$ and $n = 2d\cdot h!$ depend only on the geometry of $(\tY,\phi)$. In particular, we obtain large $l$ once we apply Proposition \ref{proposition} with large $B$.
\end{rmk}

We end this section with the following result that will be used in the proof of Theorem~\ref{theorem: main theorem}.
Recall that a group $G$ satisfies the \textit{unique root property} if for any elements $x,y\in G$, $x^n = y^n$ implies $x = y$ for any nonzero integer $n$. 
Because finitely generated right-angled Artin groups are bi-orderable \cite{DuchampKrob1992}, and all bi-orderable groups satisfy the unique root property \cite[Lemma~6.3]{Minasyan2012}, finitely generated right-angled Artin groups satisfy the unique root property. 
Note that the unique root property passes to subgroups.
Since special groups embed into right-angled Artin groups \cite[Theorem~1.1]{Haglund-Wise2008}, they also satisfy the unique root property.

\begin{lem}\label{lemma: quasi-lines with high bridge}
Suppose that $X$ is a compact virtually special cube complex.
Let $(\tY_1,\phi_1)$ and $(\tY_2,\phi_2)$ be quasilines in $\tX$.
Then there exist constants $S,m$, depending on quasilines $(\tY_1,\phi_1), (\tY_2,\phi_2)$ and $\tX$, such that $\phi_1^{m\lv \phi_2\rv} = \phi_2^{\pm m\lv \phi_1\rv}$ whenever $\diam(\tY_1\pf \tY_2)> S$. 

Moreover, if $\bS\geq S$, and $\bm$ is a multiple of $m$, then $\bS,\bm$ also satisfy the statement.
\end{lem}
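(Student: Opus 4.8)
The plan is to reduce the statement to the unique root property of special groups. Since $X$ is virtually special, $\pi_1 X$ is virtually special, and by passing to a finite-index special subgroup we may work with a group satisfying the unique root property (here one must be slightly careful, since $\phi_1,\phi_2$ may not lie in the special subgroup, but powers of them do; so one should first replace $\phi_1,\phi_2$ by suitable powers to land in a finite-index special subgroup, and track how this affects the constant $m$). The key geometric input is the Bridge Theorem~\ref{Thm: bridge theorem}: if $\diam(\tY_1 \pf \tY_2)$ is large, then $\tY_1$ and $\tY_2$ fellow-travel along a long convex subcomplex isomorphic to a product $C \times (\tY_1\pf\tY_2)$, and both $\langle \phi_1\rangle$ and $\langle\phi_2\rangle$ essentially translate along this common direction.

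The main steps, in order, are as follows. First, choose $S$ large enough (in terms of the combinatorial translation lengths $|\phi_1|,|\phi_2|$ and, via Corollary~\ref{corollary: gates are bounded}, in terms of the cube complexes involved) so that when $\diam(\tY_1\pf\tY_2) > S$, the projections $\tY_1 \pf \tY_2$ and $\tY_2 \pf \tY_1$ each contain a combinatorial geodesic segment long compared with $|\phi_1|$ and $|\phi_2|$. Second, apply the Bridge Theorem to get the product region $C \times (\tY_1\pf\tY_2)$; inside the $(\tY_1\pf\tY_2)$-factor one finds a long geodesic $\gamma$ that is simultaneously coarsely preserved by $\langle\phi_1\rangle$ and $\langle\phi_2\rangle$. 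Third, extract a common "axis direction": the elements $\phi_1^{|\phi_2|}$ and $\phi_2^{|\phi_1|}$ have equal combinatorial translation length $|\phi_1|\cdot|\phi_2|$, and by essentiality of a hyperplane $H$ crossing $\gamma$ (compare Proposition~\ref{proposition} and Remark~\ref{remark: convex subcomplex C already gives essential hyperplane}) both elements move $H$ off itself in the same direction. One then shows that a uniform power $\psi := \phi_1^{m|\phi_2|}$ and $\phi_2^{\pm m|\phi_1|}$ agree on a sufficiently long portion of $\gamma$, forcing them to agree on all of the common axis and hence to be equal as isometries (an isometry of $\tX$ fixing a nonempty subcomplex pointwise and agreeing with another isometry on a spanning set of hyperplane-crossings is determined). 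Finally, having shown $\phi_1^{m|\phi_2|}$ and $\phi_2^{\pm m|\phi_1|}$ are equal, the conclusion for the minimal such $m$ is recorded, and the "moreover" clause is immediate since enlarging $S$ and replacing $m$ by a multiple only strengthens each hypothesis and each conclusion ($\phi_1^{\bar m|\phi_2|} = (\phi_1^{m|\phi_2|})^{\bar m/m} = (\phi_2^{\pm m|\phi_1|})^{\bar m / m} = \phi_2^{\pm \bar m|\phi_1|}$).

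I expect the main obstacle to be Step 3: pinning down the sign and promoting "agree along a long segment" to "agree as group elements." The sign ambiguity $\phi_2^{\pm m|\phi_1|}$ is genuine and comes from the two ways $\langle\phi_2\rangle$ can translate relative to $\langle\phi_1\rangle$ along the bridge; handling it requires choosing orientations of the ends of the product region consistently. The promotion step is where the unique root property enters: once $\phi_1^{m|\phi_2|}$ and $\phi_2^{\pm m|\phi_1|}$ act the same way on a large convex piece, one shows their ratio is an element of finite order (it fixes a large enough subcomplex, hence is trivial by torsion-freeness — but one must be careful that "large enough" is controlled only by $(\tY_1,\phi_1),(\tY_2,\phi_2)$ and $\tX$, using cocompactness), or alternatively that some power of the ratio is trivial and then invoking the unique root property in the special quotient to conclude the ratio itself is trivial. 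Getting all the quantifiers in the right order — so that $S$ and $m$ depend only on the quasilines and $\tX$, not on the particular $g_i$ realizing the configuration — is the delicate bookkeeping, and Corollary~\ref{corollary: gates are bounded} together with cocompactness of $\langle\phi_i\rangle$ on $\tY_i$ is what makes it possible.
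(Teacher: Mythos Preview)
Your overall framework --- pass to a finite-index special subgroup and invoke the unique root property --- is exactly right, and your ``moreover'' clause is handled correctly. But your Steps~2--3 are far more complicated than needed, and as you yourself flag, the promotion from ``agree along a long segment'' to ``equal as group elements'' is not clearly justified. Two isometries of $\tX$ with the same translation length along a common coarse axis need not coincide; you would need them to agree on an actual vertex (then freeness of the $\pi_1X$-action finishes it), and your outline does not produce such a vertex.

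The paper's proof bypasses all of this geometry with a single algebraic observation you almost have in hand: the group $\langle\phi_1\rangle\cap\langle\phi_2\rangle$ acts cocompactly on $\tY_1\pf\tY_2$, and by Corollary~\ref{corollary: gates are bounded} the quotient has diameter $\leq S$ for some constant $S$. Hence if $\diam(\tY_1\pf\tY_2)>S$, the intersection $\langle\phi_1\rangle\cap\langle\phi_2\rangle$ is nontrivial, i.e.\ $\phi_1^{d_1}=\phi_2^{d_2}$ for some nonzero $d_1,d_2$ \emph{as group elements} --- no Bridge Theorem, no fellow-traveling argument required. From there it is pure algebra: translation lengths give $|d_1|\,|\phi_1|=|d_2|\,|\phi_2|$, so
\[
\bigl(\phi_1^{m|\phi_2|}\bigr)^{|d_1||\phi_1|}
=\phi_1^{m|d_1||\phi_1||\phi_2|}
=\phi_2^{\pm m|d_2||\phi_1||\phi_2|}
=\bigl(\phi_2^{\pm m|\phi_1|}\bigr)^{|d_1||\phi_1|},
\]
and with $m=[\pi_1X:\pi_1\hX]$ both bases lie in the special normal subgroup $\pi_1\hX$, so unique roots give $\phi_1^{m|\phi_2|}=\phi_2^{\pm m|\phi_1|}$. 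The missing idea in your proposal is precisely this: use Corollary~\ref{corollary: gates are bounded} not merely to bound diameters but to force $\langle\phi_1\rangle\cap\langle\phi_2\rangle\neq 1$ directly.
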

\begin{proof}
Because $X$ is compact virtually special, it has a compact special cover $\bX$. Take a compact regular cover $\hX$ of $X$ factoring through $\bX$. Then $\pi_1\hX \triangleleft \pi_1X$ is a normal subgroup of finite index $m = [\pi_1X: \pi_1\hX]$. Moreover, $\hX$ is special as a covering space of the special cube complex $\bX$ \cite[Lemma~3.7]{Haglund-Wise2008}. In particular, the normal subgroup $\pi_1\hX$ satisfies the unique root property.

For each quasiline $(\tY_i,\phi_i)$, the inclusion $\tY_i\hookrightarrow \tX$ induces a local isometry $\tY_i/\la\phi_i\ra\to \tX/\pi_1X = X$. By Corollary \ref{corollary: gates are bounded}, there exists a constant $S$ depending on local isometries $\tY_1/\la\phi_1\ra, \tY_2/\la\phi_2\ra\to X$ such that $\diam(\tY_1\pf \tY_2/\la\phi_1\ra\cap \la\phi_2\ra)\leq S$. Thus, when the projection has diameter $\diam(\tY_1\pf \tY_2)>S$, the intersection $\la\phi_1\ra\cap \la\phi_2\ra$ is nontrivial, and there exist nonzero integers $d_1,d_2$ such that $\phi_1^{d_1} = \phi_2^{d_2}$. In particular, $\lv d_1\rv \lv \phi_1\rv = \lv d_2\rv \lv \phi_2\rv$. Therefore,
\[
\left(\phi_1^{m|\phi_2|}\right)^{|d_1||\phi_1|} = \phi_1^{m|d_1||\phi_1||\phi_2|}
= \phi_2^{\pm m|d_2||\phi_1||\phi_2|} 
= \left(\phi_2^{\pm m|\phi_1|}\right)^{|d_2||\phi_2|}
= \left(\phi_2^{\pm m|\phi_1|}\right)^{|d_1||\phi_1|},
\]
where the choice of $\phi_2$ and $\phi_2^{-1}$ depends on whether $d_1/d_2$ is positive or negative.
Since $m = [\pi_1X: \pi_1\hX]$ is the index of the normal subgroup $\pi_1\hX$, elements $\phi_1^{m|\phi_2|}, \phi_2^{m|\phi_1|}$ are contained in $\pi_1\hX$. Because $\pi_1\hX$ satisfies the unique root property, either $\phi_1^{m|\phi_2|} = \phi_2^{m|\phi_1|}$ or $\phi_1^{m|\phi_2|} = \phi_2^{-m|\phi_1|}$.

Finally, note that the above argument also applies to any constant $\bS\geq S$. Moreover, $\phi_1^{m\lv \phi_2\rv} = \phi_2^{\pm m\lv \phi_1\rv}$ clearly implies $\phi_1^{\bm\lv \phi_2\rv} = \phi_2^{\pm \bm\lv \phi_1\rv}$ for any multiple $\bm$ of $m$. This completes the proof.
\end{proof}

\section{Graph of cube complexes with almost cyclonormal edges}\label{section: main theorem}
\subsection{Almost cyclonormal edges}
Recall that a subgroup $H$ of a group $G$ is \textit{cyclonormal} if for any element $g\in G\setminus H$, the subgroup $H\cap H^g$ is either trivial or cyclic, where $H^g$ denotes the conjugate $g^{-1}Hg$.
The following definition generalizes \cite[Definition~4.5.1]{Wise1996NPCSquareComplexes} to graph of nonpositively curved cube complexes.
\begin{defn}\label{definition: almost cyclonormal edges}
Let $X_\Gamma$ be a graph of nonpositively curved cube complexes. We say that $X_\Gamma$ has \textit{cyclonormal edges} if, for every edge $e$ of $\Gamma$, every endpoint $v$ of $e$, and the associated attaching map $\varphi: X_e\to X_v$, the subgroup $\varphi_*(\pi_1X_e)\leq \pi_1X_v$ is cyclonormal. 

We say that $X_\Gamma$ has \textit{almost cyclonormal edges} if, for every vertex $v$ of $\Gamma$, the following conditions hold:
\begin{enumerate}
    \item For any edge $e$ with endpoint $v$ and the attaching map $\varphi:X_e\to X_v$, if elements $g_1,g_2,g_3\in \pi_1X_v$ represent distinct left cosets of $\varphi_*(\pi_1X_e)$ in $\pi_1X_v$, then the subgroup $\varphi_*(\pi_1X_e)^{g_1}\cap \varphi_*(\pi_1X_e)^{g_2}\cap \varphi_*(\pi_1X_e)^{g_3}$ is either trivial or cyclic.
    \item For any pair of distinct edges $e,f$ with a common endpoint $v$ and the attaching maps $\varphi: \pi_1X_{e}\to \pi_1X_v$, $\psi: \pi_1X_f\to \pi_1X_v$, if elements $g_1,g_2$ represent distinct left cosets of $\psi_*(\pi_1X_f)$ in $\pi_1X_v$, then the subgroup $\varphi_*(\pi_1X_e)\cap \psi_*(\pi_1X_f)^{g_1}\cap \psi_*(\pi_1X_f)^{g_2}$ is either trivial or cyclic.
\end{enumerate}
\end{defn}
\begin{rmk}
Recall from Remark~\ref{remark: simplicial graph} that the graph of nonpositively curved cube complexes $X_\Gamma$ is assumed to be defined over a simplicial graph $\Gamma$. In general, when the underlying graph $\Gamma$ is not simplicial, we should also consider the case $e=f$ in (2).

By definition, a graph of nonpositively curved cube complexes $X_\Gamma$ with cyclonormal edges also has almost cyclonormal edges. 

Because the fundamental group $\pi_1X_v$ of each vertex space is torsion-free, its nontrivial cyclic subgroups are infinite. Thus, the triple intersections in Definition~\ref{definition: almost cyclonormal edges} are either trivial or infinite cyclic.
\end{rmk}
\begin{rmk}\label{remark: triple intersection}
Let $X$ be a cube complex that splits as a graph of nonpositively curved cube complexes $X_\Gamma$.
Consider the action of $\pi_1X$ on the associated Bass--Serre tree $\cT$. Let $\te,\tf$ be a pair of distinct edges in $\cT$ with a common vertex $\tv$. If there exists $g\in \stab(\te)\setminus \stab(\tf)$. Then, regardless of whether $\te,\tf$ are lifts of the same edge in $\Gamma$ or of distinct edges, the three subgroups $\stab(\te), \stab(\tf), \stab(\tf)^{g^{-1}}$ are distinct in $\stab(\tv)$. Therefore, when $X_\Gamma$ has almost cyclonormal edges, the triple intersection 
\[
\stab(\te) \cap \stab(\tf) \cap \stab(\tf)^{g^{-1}}
\]
is either trivial or infinite cyclic.
\end{rmk}
\begin{thm}\label{theorem: main theorem}
Let $X$ be a compact cube complex that splits as a graph of virtually special cube complexes with almost cyclonormal edges. Then $\pi_1X$ has finite stature with respect to the vertex groups.
\end{thm}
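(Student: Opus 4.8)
The plan is to verify condition~(5) of Lemma~\ref{lemma: vefity finite stature on paths}: for each vertex $v$ of $\Gamma$ and each lift $\tv\in V(\cT)$ with $\stab(\tv)=\pi_1X_v$, there are only finitely many $\pi_1X_v$-conjugacy classes of infinite subgroups $\pstab(\rho)$, where $\rho$ is a finite path in $\cT$ starting at $\tv$. Fix such a path $\rho=(\tv=\tv_0,\tv_1,\dots,\tv_n)$ with edges $\te_1,\dots,\te_n$, so that $K:=\pstab(\rho)=\bigcap_{i=1}^n\stab(\te_i)$, and assume $K$ is infinite. By Lemma~\ref{lemma: represent path stabilizer}, $K$ acts cocompactly on a convex subcomplex $\tY\subset\tX_v$ lying inside the first edge-space elevation $\tY_1^0$, and is represented by a local isometry $\tY/K\to X_v$. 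Since $X_v$ is compact, the theorem reduces to producing a constant $S=S(X)$ with $\diam(\tY/K)\le S$ for every such $\rho$: then $\tY/K$ realizes only finitely many isomorphism types, each admits only finitely many local isometries into $X_v$, and these determine only finitely many $\pi_1X_v$-conjugacy classes.

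To bound $\diam(\tY/K)$ I would run a pruning process on $\rho$. At each interior vertex $\tv_i$ the two incident edges $\te_i,\te_{i+1}$ correspond to conjugates of edge groups of the \emph{virtually special} vertex space $X_{v_i}$, so Corollary~\ref{corollary: gates are bounded}, applied inside $X_{v_i}$, provides a uniform bound $S_0=S_0(X)$ on the diameter of any $(\stab(\te_i)\cap\stab(\te_{i+1}))$-cocompact convex core. The first key observation is that if two consecutive edges $\te_i,\te_{i+1}$ are lifts of the same edge of $\Gamma$, then Remark~\ref{remark: triple intersection} (cyclonormality) forces $\stab(\te_i)\cap\stab(\te_{i+1})$, and hence $K$, to be trivial or infinite cyclic; more generally, when $K$ is non-cyclic one transports edge stabilizers along $\rho$ to a common vertex and, using the finiteness of $E(\Gamma)$ together with the triple-intersection clauses of Definition~\ref{definition: almost cyclonormal edges}, shows that a sufficiently long such path again forces $K$ cyclic. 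Thus there is $N_0=N_0(X)$ so that if $K$ is non-cyclic then $K$ is $\pi_1X_v$-conjugate to $\pstab(\rho')$ for a path $\rho'$ of length $\le N_0$, and chaining Corollary~\ref{corollary: gates are bounded} through the bounded sequence of vertex spaces traversed by $\rho'$ bounds $\diam(\tY/K)$ in the non-cyclic case.

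It remains to treat $K=\la\phi\ra$ infinite cyclic, which I expect to be the main obstacle. Here $\phi$ preserves every edge-space elevation $\tY_i^0$, the pair $(\tY,\phi)$ is a quasiline in $\tX$ lying inside $\tY_1^0$, and $\la\phi\ra\le\stab(\te_1)$ is a conjugate of the edge group $\pi_1X_{e_1}$. Suppose $\diam(\tY/\la\phi\ra)$ is large; then $\tY$ contains a long combinatorial geodesic avoiding the finitely many trivial hyperplanes of the quasiline, so Proposition~\ref{proposition} yields a convex subcomplex $C$ and an integer $n$ with $\bigl(\bigcup_{k\in\bZ}\phi^{nk}(C),\phi^{n}\bigr)$ a quasiline fellow-travelling that geodesic, whose carrier meets an essential hyperplane of $\tY$ (Remark~\ref{remark: convex subcomplex C already gives essential hyperplane}). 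Comparing this quasiline with the quasilines supported on the elevations $\tY_i^0$ and invoking Lemma~\ref{lemma: quasi-lines with high bridge} — using that special groups satisfy the unique root property — a large bridge between two such quasilines forces a coarse-commutation relation between powers of $\phi$ and powers of the relevant axial elements of the edge groups. This constrains how the axis of $\phi$ can drift across the chain $\tY_1^0,\dots,\tY_n^0$, and combined with the bound $S_0$ on the pairwise intersections $\stab(\te_i)\cap\stab(\te_{i+1})$ it bounds $\diam(\tY/\la\phi\ra)$ by a constant depending only on $X$, completing the reduction. The technical heart is precisely this last step: showing that a long cyclic path stabilizer is pinned down, up to conjugacy, by coarsely commuting with the edge-space directions.
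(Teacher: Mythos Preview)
Your reduction is the same as the paper's: both aim to bound $\diam\bigl((\tY_1^0\pf\tY_n^0)/\pstab(\rho)\bigr)$ uniformly, and both finish via finitely many local isometries into $X_v$. But the argument you sketch to obtain this bound has a genuine gap, and the mechanism the paper uses is quite different from your cyclic/non-cyclic dichotomy.

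First, your non-cyclic case does not work under the \emph{almost} cyclonormal hypothesis. You invoke Remark~\ref{remark: triple intersection} to conclude that $\stab(\te_i)\cap\stab(\te_{i+1})$ is cyclic, but that remark concerns \emph{triple} intersections $\stab(\te)\cap\stab(\tf)\cap\stab(\tf)^{g^{-1}}$; pairwise intersections of distinct edge stabilizers at a vertex are \emph{not} forced to be cyclic by Definition~\ref{definition: almost cyclonormal edges}. Consequently there is no reason a long path should force $\pstab(\rho)$ cyclic, and your claimed bound $N_0$ on path length in the non-cyclic case is unjustified. The ``transport edge stabilizers to a common vertex and apply finiteness of $E(\Gamma)$'' step is not a real argument here.

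Second, your cyclic case misidentifies the objects to compare. The elevations $\tY_i^0$ are universal covers of edge spaces, not quasilines, so ``quasilines supported on the $\tY_i^0$'' and an application of Lemma~\ref{lemma: quasi-lines with high bridge} to them is not meaningful as stated.

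The paper's proof avoids the dichotomy entirely. It begins with a long geodesic $\gamma\subset\tY_1^0\pf\tY_n^0$, uses pigeonhole on images in the edge space $Y_1$ to produce a nontrivial $g\in\stab(\te_1)$ carrying one subsegment of $\gamma$ to another, and then tracks the \emph{first index $k$} at which $g$ falls out of $\pstab(\rho[0,k+1])$. At that vertex one has three genuinely distinct subgroups $\stab(\tY_k^1),\stab(\tY_{k+1}^0),\stab(\tY_{k+1}^0)^{g^{-1}}$, and almost cyclonormality now yields a quasiline $(L_k,\phi_k)$ --- the quasiline is produced by the falling-out element $g$, not by assuming $\pstab(\rho)$ is cyclic. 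One then applies Proposition~\ref{proposition} to a long subsegment inside $L_k$ to get a fellow-travelling subquasiline, and checks whether a controlled power $\phi=\phi_k^{n_k\beta}$ lies in $\pstab(\rho)$. If not, one repeats the falling-out argument with $\phi$ in place of $g$, obtains a second quasiline $(L_t,\phi_t)$, and compares the two via Lemma~\ref{lemma: quasi-lines with high bridge} to reach a contradiction. This two-step bootstrap --- produce an element, locate its failure point, build a quasiline there, extract a better element, and show a second failure is impossible --- is the real content of the proof and is absent from your outline.
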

By Theorem \ref{theorem: fintie stature equivalent to virtual specialness}, we have the following criterion of virtual specialness.
\begin{cor}\label{corollary: virtually special}
Let $X$ be a compact cube complex that splits as a graph of nonpositively curved cube complexes with (almost) cyclonormal edges. If the vertex groups are hyperbolic, then $X$ is virtually special.
\end{cor}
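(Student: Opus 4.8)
The plan is to obtain the corollary as a formal consequence of the two principal results already assembled: Theorem~\ref{theorem: main theorem} (finite stature for graphs of virtually special cube complexes with almost cyclonormal edges) and Theorem~\ref{theorem: fintie stature equivalent to virtual specialness} (the Huang--Wise equivalence between finite stature and virtual specialness for graphs of cube complexes with hyperbolic vertex groups). The only gap between the two is that Theorem~\ref{theorem: main theorem} requires the vertex \emph{spaces} to be virtually special, whereas here we assume only that the vertex \emph{groups} are hyperbolic; so the first thing to do is close that gap.

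First I would upgrade each vertex space. Since $X$ is compact and splits over a finite graph, each vertex space $X_v$ is a compact nonpositively curved cube complex, and $\pi_1X_v$ acts properly and cocompactly on the $\CAT(0)$ cube complex $\widetilde{X_v}$. As $\pi_1X_v$ is hyperbolic, Agol's theorem \cite{Agol2013VirtualHaken}, together with the special cube complex machinery of Haglund--Wise \cite{Haglund-Wise2008}, implies that $X_v$ is virtually special. (Alternatively, one can invoke Theorem~\ref{theorem: fintie stature equivalent to virtual specialness} directly, applied to $X_v$ viewed as the total space of the one-vertex graph of groups with no edges: finite stature of $\pi_1X_v$ with respect to $\{\pi_1X_v\}$ holds trivially, since the only intersection of $\pi_1X_v$-conjugates of $\pi_1X_v$ is $\pi_1X_v$ itself, whence $X_v$ is virtually special.)

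With this in hand, $X$ is a compact cube complex splitting as a graph of virtually special cube complexes whose edges are (almost) cyclonormal, so Theorem~\ref{theorem: main theorem} applies and yields that $\pi_1X$ has finite stature with respect to the vertex groups $\{\pi_1X_v \mid v\in V(\Gamma)\}$. Finally, since these vertex groups are hyperbolic and $\pi_1X$ has finite stature with respect to them, Theorem~\ref{theorem: fintie stature equivalent to virtual specialness}, now applied to the full splitting of $X$, gives that $X$ is virtually special, as claimed.

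I do not expect a genuine obstacle in this argument: all the substantive content lives in Theorem~\ref{theorem: main theorem} and in the deep inputs underlying the first step (Agol's theorem and the Huang--Wise equivalence). The only point demanding attention is the passage from hyperbolic vertex groups to virtually special vertex spaces, which is exactly what makes the hypotheses of Theorem~\ref{theorem: main theorem} available.
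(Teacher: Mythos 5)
Your proposal is correct and follows essentially the same route as the paper: deduce virtual specialness of the vertex spaces from hyperbolicity of the vertex groups (via Agol's theorem, a step the paper leaves implicit), apply Theorem~\ref{theorem: main theorem} to get finite stature, and conclude with Theorem~\ref{theorem: fintie stature equivalent to virtual specialness}. No changes needed.
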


\begin{proof}[Proof of Theorem \ref{theorem: main theorem}]
Suppose $X$ splits as a graph of nonpositively curved cube complexes $X_\Gamma$ with almost cyclonormal edges.
Let $\cT$ be the associated Bass--Serre tree. For each vertex $v$ of $\Gamma$, let $\tv$ be the lift of $v$ in $\cT$ with $\stab(\tv) = \pi_1X_v$. 
Let $\rho:[0,n]\to \cT$ be an arbitrary finite path starting from the vertex $\rho(0) =  \tv$. 
Recall from the proof of Lemma~\ref{lemma: represent path stabilizer} that in the universal cover $\tX$ of $X$, the finite path $\rho$ corresponds to a subcomplex
\[
\bigsqcup_{i=0}^n \tX_i \sqcup \left(\bigsqcup_{j=1}^n\tY_j\times[0,1]\right)/\sim 
\]
where $X_0 = X_v, X_1,\dots, X_n$ and $Y_1,\dots, Y_n$ are (not necessarily distinct) vertex spaces and edge spaces.
For each $j$, $\tY_j\times \{0\}$ is attached to an elevation $\tY_j^0\subset \tX_{i-1}$ of $Y_j$ and $\tY_j\times \{1\}$ is attached to an elevation $\tY_j^1\subset \tX_i$. 
For simplicity, we use $\tY_j^i$ to denote $\tY_j\times \{i\}$ with $i = 0,1$.
Consider the projection 
\[
\tY_1^0 \pf \tY_n^0
= \tY_1^0 \pf \tY_2^0 \pf \cdots \pf \tY_n^0 \subset \tY_1^0\subset \tX_0.
\]
By Lemma \ref{lemma: represent path stabilizer}, the local isometry 
\[
(\tY_1^0 \pf \tY_n^0)/\pstab(\rho)\longrightarrow X_0 = X_v
\]
represents the subgroup $\pstab(\rho)\leq \pi_1X_v$. We claim that there exists a constant $P$ such that, for each such finite path $\rho$ with infinite pointwise stabilizer $\pstab(\rho)$, we have 
\begin{equation}\label{equation: upper bounda of diameter in main thm}
    \diam\left( (\tY_1^0 \pf \tY_n^0)/\pstab(\rho) \right) \leq P.
\end{equation}
In particular, this implies that there are finitely many local isometries of the form $(\tY_1^0 \pf \tY_n^0)/\pstab(\rho)\to X_v$, which induce finitely many $\pi_1X_v$-conjugacy classes of infinite subgroups of $\pi_1X_v$ of the form $\pstab(\rho)$. It then follows from Lemma \ref{lemma: vefity finite stature on paths} that $\pi_1X$ has finite stature with respect to the vertex groups. 

We prove \eqref{equation: upper bounda of diameter in main thm} via the following steps:
\begin{itemize}
    \item In Step 1, we find a large enough constant $P$.
    \item In Step 2, we start with an arbitrary combinatorial geodesic $\gamma$ of length $P$ in $\tY_1^0\pf \tY_n^0$, and use it to construcct a quasiline $(L_k, \phi_k)$ in $\tY_k^1$ for a specific $k$. Moreover, $L_k$ contains a long combinatorial geodesic $\gamma_b^k$.
    \item In Step 3, we apply Proposition~\ref{proposition} to a subsegment of $\gamma_b^k$ in the quasiline $L_k$, and obtain a fellow-traveling convex subcopmlex $C_k$.
    \item In Steps 4--5, we use the element $\phi_k$ to construct an element $\phi$, and show that $\phi\in \pstab(\rho)$ by contradiction.
    \item In Step 6, we show that $\alpha\cdot \gamma\cap \gamma\neq \emptyset$ by taking $\alpha = \phi^d$ for some power $d$. This implies \eqref{equation: upper bounda of diameter in main thm}.
\end{itemize}

\medskip

\noindent
\textbf{Step 1.} \textit{Finding the upper bound $P$ of diameter.} We consider the following constants that depend only on $X$:
\begin{enumerate}[label=(\alph*)]
\item \label{item: choice of S}
Suppose that $e,f$ are a pair of (not necessarily distinct) edges in $\Gamma$ with a common vertex $v$. Let attaching maps of their edge spaces be $\varphi: X_e\to X_v$ and $\psi: X_f\to X_v$. 
By Corollary~\ref{corollary: gates are bounded}, there exists a constant $S_{e,f,v}$ such that 
\[
\diam\left(g_1\tX_{e}\pf g_2\tX_{f}\pf g_3\tX_{f}/ 
\varphi_*(\pi_1X_{e})^{g_1^{-1}}\cap {\psi_*(\pi_1X_{f})}^{g_2^{-1}}\cap {\psi_*(\pi_1X_{f})}^{g_3^{-1}}\right)\leq S_{e,f,v}
\]
for any $g_1,g_2,g_3\in \pi_1X_v$. Since $X$ is compact, $\Gamma$ is a finite graph.
So there are finitely many such constants $S_{e,f,v}$. Let
\[
S = \max\{S_{e,f,v}\mid e,f\in E(\Gamma),\ \text{$v$ is a common vertex of $e,f$}\}.
\]

\item \label{item: representative of quasilines and trivial hyperplane}
Suppose again that $e,f$ are a pair of (not necessarily distinct) edges in $\Gamma$ with a common vertex $v$. Let attaching maps of their edge spaces be $\varphi: X_e\to X_v$ and $\psi: X_f\to X_v$. Recall that edges of $X_\Gamma$ are almost cyclonormal. For any $g_1,g_2,g_3\in \pi_1X$, suppose that one of the following holds:
\begin{itemize}
    \item $e = f$ and $g_1,g_2,g_3$ represent distinct left cosets of $\varphi_*(\pi_1X_e)$ in $\pi_1X_v$; 
    \item $e \neq f$ and $g_2,g_3$ represent distinct left cosets of $\psi_*(\pi_1X_f)$ in $\pi_1X_v$.
\end{itemize}
Then the subgroup $\varphi_*(\pi_1X_{e})^{g_1^{-1}}\cap {\psi_*(\pi_1X_{f})}^{g_2^{-1}}\cap {\psi_*(\pi_1X_{f})}^{g_3^{-1}}$ is either trivial or infinite cyclic. In the cyclic case, the complex 
\[
g_1\tX_{e}\pf g_2\tX_{f}\pf g_3\tX_{f}/ 
\varphi_*(\pi_1X_{e})^{g_1^{-1}}\cap {\psi_*(\pi_1X_{f})}^{g_2^{-1}}\cap {\psi_*(\pi_1X_{f})}^{g_3^{-1}}
\]
lifts to a quasiline 
\[
\left(
L = g_1\tX_{e}\pf g_2\tX_{f}\pf g_3\tX_{f}, \phi
\right)
\]
where $\phi$ is a generator of $\varphi_*(\pi_1X_{e})^{g_1^{-1}}\cap {\psi_*(\pi_1X_{f})}^{g_2^{-1}}\cap {\psi_*(\pi_1X_{f})}^{g_3^{-1}}$. By Corollary~\ref{corollary: gates are bounded} and our choice of $S$ in \ref{item: choice of S}, we have 
\[
\diam\left(L/ 
\varphi_*(\pi_1X_{e})^{g_1^{-1}}\cap {\psi_*(\pi_1X_{f})}^{g_2^{-1}}\cap {\psi_*(\pi_1X_{f})}^{g_3^{-1}}\right)\leq S.
\]
Thus, there are finitely many isomorphism classes of such complexes among all pair of (not necessarily distinct) edges $e,f\in E(\Gamma)$ with the common vertex $v$. They induce finitely many $\pi_1X_v$-conjugacy classes of subgroups of the form
\[
\varphi_*(\pi_1X_{e})^{g_1^{-1}}\cap {\psi_*(\pi_1X_{f})}^{g_2^{-1}}\cap {\psi_*(\pi_1X_{f})}^{g_3^{-1}}\leq \pi_1X_v
\]
for each vertex $v\in V(\Gamma)$.
As a result, for each $v\in V(\Gamma)$, there are finitely many isomorphism classes of quasilines of the form $\left(g_1\tX_{e}\pf g_2\tX_{f}\pf g_3\tX_{f}, \phi\right)$ in $\tX_v$. By collecting all isomorphism classes of quasilines in $\tX_v$ as $v$ ranges over the vertices of $\Gamma$, we obtain finitely many isomorphism classes, with representatives
\[
(L_1', \phi_1'),\dots, (L_s',\phi_s').
\]
Suppose $L_i'$ has $h_i$ trivial hyperplanes. Take $h = \max\{h_i\mid 1\leq i\leq s\}$.

\item \label{type2quasiline}
Apply Proposition~\ref{proposition} to $(L_i',\phi_i')$ for a long combinatorial geodesic, we obtain a quasiline of the form
\[
\left( \bigcup_{p\in \bZ}(\phi_i')^{pn_i}(C_i), (\phi_i')^{n_i} \right)
\]
for some fellow-travelling convex subcomplex $C_i$.
By our construction in Proposition~\ref{proposition}, each quasiline representative $(L_i',\phi_i')$ in \ref{item: representative of quasilines and trivial hyperplane} yields finitely many isomorphism classes of fellow-traveling subcomplexes $C_i$, whose translation under $(\phi_i')^{n_i}$ give finitely many isomorphism classes of quasilines with representatives
\[
\left(J_1,\xi_1\right),\cdots, (J_r,\xi_r).
\]
By applying Lemma~\ref{lemma: quasi-lines with high bridge} to the pair consisting of a representative $(J_i,\xi_i)$ above and a representative $(L_j',\phi_j')$ in \ref{item: representative of quasilines and trivial hyperplane}, we obtain a constant $Q_{ij}$ and an integer $m_{ij}$ such that $\xi_i^{m_{ij}|\phi_j'|} = (\phi_j')^{\pm m_{ij}|\xi_i|}$ whenever $\diam(J_i\pf L_j')\geq Q_{ij}$. Then let $Q$ be the maximum
\[
Q = \max \{Q_{ij}\mid 1\leq i\leq r,\ 1\leq j\leq s\}
\]
and let $m$ the the least common multiple
\[
m = LCM \{m_{ij}\mid 1\leq i\leq r,\ 1\leq j\leq s\}.
\]

\item \label{item: possibilites to cross a hyperplane}
By Lemma \ref{lemma: properties of three types of hyperplanes}, for each quasiline representative $(L_i',\phi_i')$ in \ref{item: representative of quasilines and trivial hyperplane}, there is an upper bound on the diameters of all essential hyperplanes in $L_i'$. Consequently, for each essential hyperplane $H$ of $L_i'$, its number of vertices $\#V(H)$ is finite. Let
\[
\cL = \max_{1\leq i\leq s} \{ \#V(H)\mid  \text{$H$ is an essential hyperplane in $L_i'$}\}.
\]
In particular, there are at most $\cL$ possible ways for combinatorial geodesics to cross each essential hyperplane $H$ of $L_i'$.

\item  \label{item: choice of B}
For each representative $(L_i', \phi_i')$ of a quailine isomorphism class in \ref{item: representative of quasilines and trivial hyperplane}, Lemma~\ref{lemma: essential hyperplane crosses long geodesic} and Proposition~\ref{proposition} yield constants $M_i'$ and $B_i', n_i'$ respectively. Take $M = \max_{1\leq i\leq s} M_i'$. Then let
\[
B = \max_{1\leq i\leq s} \left\{  B_i', \frac{\left(2m\cdot |\phi_1'|\cdots |\phi_s'| \cdot \cL + 5\right)n_i'M}{2}, 2S, 2Q\right\}.
\]
In particular, $B \geq B_i'$ for each $i$, $B\geq2S$, and $B\geq2Q$. Further, if we apply Proposition~\ref{proposition} to $(L_i',\phi_i')$ for a combinatorial geodesic $\gamma$ of length $\geq 3B$, and obtain the subcomplex $C_i\cup \phi_i^n(C_i)\cup \cdots\cup\phi_i^{n_il_i}(C_i)$, by Remark~\ref{remark: l only depends on B},
\[
l_i' \geq 2\frac{B}{n_iM_i} - 5 
\geq 2m\cdot |\phi_1'|\cdots |\phi_s'| \cdot \cL.
\]

\item \label{item: choice of R}
Let $R = 3B(h+1)+h$.
In particular, $R > B> S$ and $ \frac{R-h}{h+1} \geq 3B $.

\item \label{item: maximal combination of edges}
Finally, let $T = \max \{ (\#E(X_e))^R\mid  e\in E(\Gamma)\}$.
In particular, for each edge space $X_e$, there are at most $T$ possible combinatorial paths in $X_e$ of length $R$.
\end{enumerate}
We claim that the constant $P = (T+1)R$ satisfies \eqref{equation: upper bounda of diameter in main thm}, i.e. for any finite path $\rho$ in the Bass--Serre tree with infinite $\pstab(\rho)$, we have $\diam\left( (\tY_1^0 \pf \tY_n^0)/\pstab(\rho) \right) \leq P$.
We prove this in Steps 2--7.

\medskip

\noindent
\textbf{Step 2.} \textit{Constructing the first quasiline.}
Let $\rho: [0,n]\to \cT$ be a finite path with infinite $\pstab(\rho)$.
To prove \eqref{equation: upper bounda of diameter in main thm}, it suffices to show that for each combinatorial geodesic $\gamma\subset \tY_1^0 \pf \tY_n^0$ of length $P$, there exists $\alpha\in \pstab(\rho)$ such that $\alpha\cdot \gamma\cap \gamma\neq \emptyset$.

Let $\gamma\subset \tY_1^0 \pf \tY_n^0\subset \tY_1^0$ be an arbitrary combinatorial geodesic of length $P = (T+1)R$. Decompose it as the concatenation of $T+1$ combinatorial geodesics $\gamma = \gamma_0*\gamma_1*\dots*\gamma_T$, where each $\gamma_i$ has length $R$. By our choice of $T$ in \ref{item: maximal combination of edges}, two of them, say $\gamma_a$ and $\gamma_b$ with $a\neq b$, have the same image in $Y_1$. Hence, there exists nontrivial $g\in \stab(\tY_1^0) = \stab(\rho[0,1])$ such that $g\cdot \gamma_a = \gamma_b$. 

If $g\in \pstab(\rho)$, then $\gamma_b = g\cdot \gamma_a\subset g\cdot \gamma\cap \gamma\neq \emptyset$ and $g$ is the desired element $\alpha$.
Suppose $g\not\in \pstab(\rho)$, then there exists a minimum $k\geq 1$ such that $g\in \pstab(\rho[0,k])\setminus \pstab(\rho[k,k+1])$. In particular,
\[
g\in \stab(\rho[k-1,k])\setminus \pstab(\rho[k,k+1]) 
= \stab(\tY_{k}^1)\setminus \stab(\tY_{k+1}^0).
\]
Then the three subgroups $\stab(\tY_{k}^1), \stab(\tY_{k+1}^0), \stab(\tY_{k+1}^0)^{g^{-1}}$ in $\stab(\tX_k)$ are distinct. Because the edges are almost cyclonormal, by Remark \ref{remark: triple intersection}, the intersection $\stab(\tY_{k}^1)\cap \stab(\tY_{k+1}^0)\cap \stab(\tY_{k+1}^0)^{g^{-1}}$ is either trivial or infinite cyclic. When it is infinite cyclic, by taking a generator $\phi_k$ of $\stab(\tY_{k}^1)\cap \stab(\tY_{k+1}^0)\cap \stab(\tY_{k+1}^0)^{g^{-1}}$, we obtain a quasiline 
\[
\left(L_{k} = \tY_{k}^1\pf \tY_{k+1}^0\pf g\tY_{k+1}^0 , \phi_{k}\right)
\]
in $\tY_{k}^1\subset \tX_{k}$ as shown in Figure \ref{figure: first quasiline}.

We claim that the subgroup $\stab(\tY_{k}^1)\cap \stab(\tY_{k+1}^0)\cap \stab(\tY_{k+1}^0)^{g^{-1}}$ must be infinite cyclic.
In fact, recall that $\gamma_b = g\cdot \gamma_a\subset \gamma\cap g\cdot \gamma$.
Because $\gamma\subset \tY_1^0\pf \tY_n^0= \tY_1^0\pf \tY_{k}^1\pf \tY_n^0$ and $g\in \pstab(\rho[0,k]) = \stab(\tY_1^0\pf \tY_k^1)$, we have
\begin{align*}
\gamma_b
\subset \gamma \cap g\cdot \gamma
&\subset (\tY_1^0\pf \tY_{k}^1\pf \tY_n^0) \cap g(\tY_1^0\pf \tY_{k}^1\pf \tY_n^0)\\
&\subset (\tY_1^0\pf \tY_{k}^1\pf \tY_{k+1}^0) \cap g(\tY_1^0\pf \tY_{k}^1\pf \tY_{k+1}^0)\\
&= (\tY_1^0\pf \tY_{k}^1\pf \tY_{k+1}^0) \cap (\tY_1^0\pf \tY_{k}^1\pf g\tY_{k+1}^0)\\
&\subset \tY_1^0\pf \tY_{k}^1\pf \tY_{k+1}^0\pf g\tY_{k+1}^0\\
&= \tY_1^0\pf L_k.
\end{align*}
Combining this with the inclusion $\gamma_b\subset \gamma$, we have
\[
\gamma_b
\subset (\tY_1^0\pf L_k)\cap \gamma
\subset (\tY_1^0\pf L_k)\cap (\tY_1^0\pf \tY_{k}^1\pf \tY_n^0)
\subset \tY_1^0\pf L_k\pf \tY_{k}^1\pf \tY_n^0.
\]
By Bridge Theorem \ref{Thm: bridge theorem}, $\gamma_b$ is isometric to its projection
\begin{equation}\label{equation: gamma in W}
\gamma_b^k
:= \Pi_{L_k\pf \tY_{k}^1\pf \tY_n^0}(\gamma_b)
\subset L_k\pf \tY_{k}^1\pf \tY_n^0.
\end{equation}
In particular,
\[
\diam(L_k)
\geq \diam(L_k\pf \tY_{k}^1\pf \tY_n^0)
\geq \diam\left(\gamma_b^k \right) 
= \diam(\gamma_b)
= R > S.
\]
Other the other hand, by Corollary \ref{corollary: gates are bounded} and our choice of $S$ in \ref{item: choice of S}, we have 
\[
\diam \left(L_k / 
\stab(\tY_{k}^1)\cap \stab(\tY_{k+1}^0)\cap \stab(\tY_{k+1}^0)^{g^{-1}}  \right) 
\leq S.
\]
This shows that the subgroup $\stab(\tY_{k}^1)\cap \stab(\tY_{k+1}^0)\cap \stab(\tY_{k+1}^0)^{g^{-1}}$ is nontrivial. 
\begin{figure}[h]
    \centering
    \includegraphics[width=0.75\linewidth]{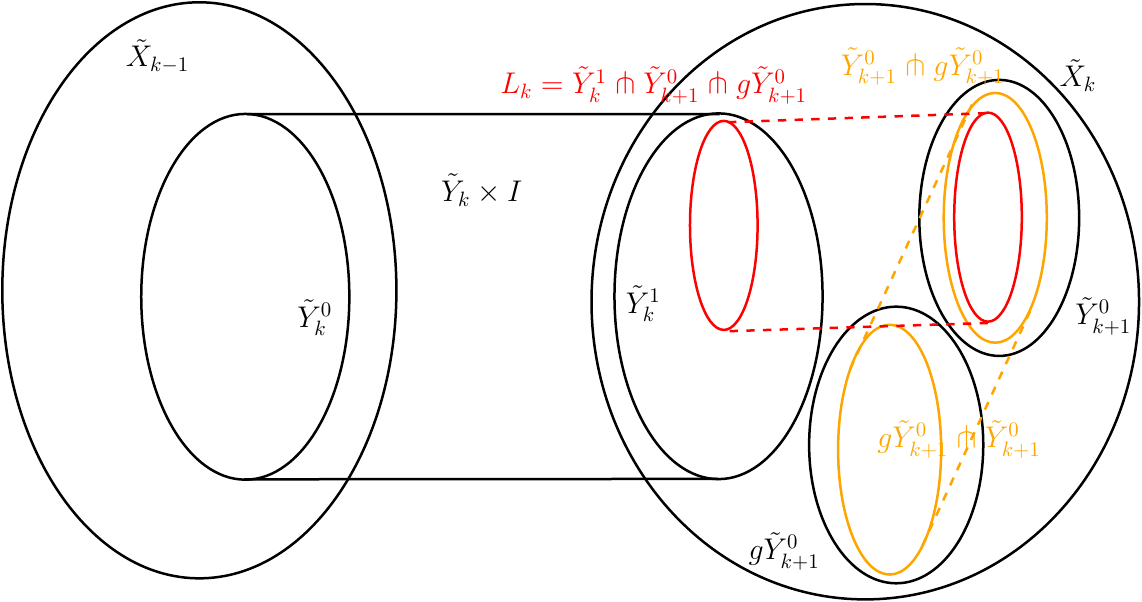}
    \caption{The quasiline $L_k = \tY_k^1 \pf \tY_{k+1}^0 \pf g\tY_{k+1}^0$ is shown as the red region in $\tY_k^1$. The dashed lines illustrate the bridge structures of projections.}
    \label{figure: first quasiline}
\end{figure}

\medskip

\noindent
\textbf{Step 3.} \textit{Constructing a fellow-traveling convex subcomplex $C_k$ in the quasiline.}
Recall from~\eqref{equation: gamma in W} that $\gamma_b^k\subset L_k\pf \tY_k^1\pf \tY_n^0$. In particular, $\gamma_b^k$ is a combinatorial geodesic in the quasiline $(L_k,\phi_k)$ of length 
\[
\diam(\gamma_b^k) = \diam(\gamma_b) = R= 3B(h+1)+h.
\]
By our choice of $h$ in \ref{item: representative of quasilines and trivial hyperplane}, the quasiline $L_{k}$ has $\leq h$ trivial hyperplanes. Because $\gamma_b^k$ crosses each hyperplane at most once, it contains a subsegment $\bar{\gamma}_b^k$ of length $\geq \frac{R-h}{h+1} \geq 3B$ that does not cross any trivial hyperplane of $L_{k}$. Apply Proposition~\ref{proposition} to the combinatorial geodesic $\bar{\gamma}_b^k$ in the quasiline $(L_{k}, \phi_{k})$ with the convex subcomplex $W = L_{k}\pf (\tY_k^1\pf \tY_n^0)$, we obtain a fellow-traveling convex subcomplex $C_{k}\subset L_{k}$ and integers $n_k, l_{k}$ such that 
\begin{itemize}
    \item $\bar{\gamma}_b^k$ contains a subsegment $\hgamma_b^k$ of length $\diam(\hgamma_b^k)> B$ such that 
    \begin{equation}\label{c_k1}
    \hgamma_b^k
    \subset C_{k}\cup \phi_k^{n_{k}}(C_{k})\cup \cdots \cup \phi_{{k}}^{n_{k}l_k}(C_k)
    \subset L_{k}\pf (\tY_k^1\pf \tY_n^0);
    \end{equation}
    \item the pair $\left(\bigcup_{p\in \bZ}\phi_k^{pn_k}(C_k), \phi_k^{n_k}\right)$ is a quasiline.
\end{itemize} 
Moreover, by our choice of $B$ in \ref{item: choice of B}, we have 
\begin{equation}\label{l_k}
l_k 
\geq 2m\cdot |\phi_1'|\cdots |\phi_s'| \cdot \cL
\geq 2m\cdot |\phi_1'|\cdots |\phi_s'|.
\end{equation}
In particular, 
\begin{equation}\label{bound}
\diam\left(\bigcup_{p=m\cdot |\phi_1'|\cdots |\phi_s'|}^{l_k}\phi_{k}^{pn_k}(C_k)\right)
\geq \frac{1}{2}\diam\left(\bigcup_{p=0}^{l_k}\phi_{k}^{pn_k}(C_k)\right)
\geq \frac{1}{2}\diam(\hgamma_b^k)
> B/2\geq S.
\end{equation}

\medskip

\noindent
\textbf{Step 4.} \textit{Constructing an element $\phi$ in $\pstab(\rho)$.}
We claim that $\phi = \phi_k^{n_k\beta}\in \pstab(\rho)$, with the integer $\beta = m\cdot |\phi_1'|\cdots |\phi_s'|$. We prove this in Steps 4--5.

Assume $\phi_k^{n_k\beta}\notin \pstab(\rho)$. 
Recall that since $\phi_k\in \stab(L_k)\leq \pstab(\rho[k-1,k+1])$, there exists an integer $t\neq k$ such that $\phi_k^{n_k\beta}\notin \stab(\rho[t,t+1])$. Without loss of generality, assume that $t > k$, and the case $t < k$ is handled similarly by symmetry.

Let $t\geq k+1$ be the minimal integer such that 
\begin{equation}\label{assume}
\phi_k^{n_k\beta}
\in \stab(\rho[k-1,t])\setminus \stab(\rho[t,t+1])
\leq \stab(\tY_{t}^1)\setminus \stab(\tY_{t+1}^0).
\end{equation}
Then the three subgroups $\stab(\tY_{t}^1), \stab(\tY_{t+1}^0), \stab(\tY_{t+1}^0)^{\phi_k^{-n_k\beta}}$ in $\stab(\tX_t)$ are distinct.  Because the edges are almost cyclonormal, by Remark \ref{remark: triple intersection}, the subgroup $\stab(\tY_{t}^1) \cap \stab(\tY_{t+1}^0) \cap \stab(\tY_{t+1}^0)^{\phi_k^{-n_k\beta}}$ is either trivial or infinite cyclic. When it is infinite cyclic, by taking a generator $\phi_{t}$ of $\stab(\tY_{t}^1) \cap \stab(\tY_{t+1}^0) \cap \stab(\tY_{t+1}^0)^{\phi_k^{-n_k\beta}}$, we obtain a quasiline 
\[
\left(
L_t = \tY_t^1\pf \tY_{t+1}^0\pf \phi_k^{n_k\beta}(\tY_{t+1}^0), \phi_t
\right)
\]
in $\tY_t^1$ as shown in Figure \ref{figure: L_t}.

We claim that the subgroup $\stab(\tY_{t}^1) \cap \stab(\tY_{t+1}^0) \cap \stab(\tY_{t+1}^0)^{\phi_k^{-n_k\beta}}$ must be infinite cyclic. To see this, notice that \eqref{c_k1} gives
\begin{equation}\label{c_k2}
C_k\cup \phi_k^{n_{k}}(C_{k})\cup \cdots \cup \phi_{{k}}^{n_{k}l_k}(C_k)
\subset L_{k}\pf (\tY_k^1\pf \tY_n^0)
= L_{k}\pf (\tY_t^1\pf \tY_n^0).
\end{equation}
Since $\phi_k^{n_k\beta}\in \stab(\rho[k-1,t])\cap \stab(L_k) \leq \stab(L_k\pf \tY_{t}^1)$, we have 
\begin{equation}\label{c_k3}
\begin{aligned}
\bigcup_{p=\beta}^{l_k}\phi^{pn_k}(C_k)
&\subset \left(C_k\cup \cdots \cup \phi_k^{n_kl_k}(C_k)\right) 
\cap \phi_k^{n_k\beta}\left(C_k\cup \cdots \cup \phi_k^{n_kl_k}(C_k)\right)\\
&\subset \left(L_{k}\pf (\tY_t^1\pf \tY_{n}^0)\right)\cap \phi_k^{n_k\beta}\left(L_{k}\pf (\tY_t^1\pf \tY_{n}^0)\right)\\
&\subset \left(L_{k}\pf (\tY_t^1\pf \tY_{t+1}^0)\right)\cap \phi_k^{n_k\beta}\left(L_{k}\pf (\tY_t^1\pf \tY_{t+1}^0)\right)\\
&= \left(L_{k}\pf \tY_t^1\pf \tY_{t+1}^0\right)\cap \left(L_{k}\pf \tY_t^1\pf \phi_k^{n_k\beta}(\tY_{t+1}^0)\right)\\
&\subset L_{k}\pf \left(\tY_t^1\pf \tY_{t+1}^0\pf \phi_k^{n_k\beta}(\tY_{t+1}^0)\right)\\
&= L_k\pf L_t.
\end{aligned}
\end{equation}
Combining this and \eqref{c_k2}, we have 
\begin{equation}\label{c_k4}
\bigcup_{p=\beta}^{l_k}\phi^{pn_k}(C_k)
\subset (L_k\pf L_t) \cap (L_{k}\pf \tY_t^1\pf \tY_n^0)
\subset L_k\pf L_t\pf \tY_t^1\pf \tY_n^0.
\end{equation}
Recall from \eqref{bound} that $\diam\left(\bigcup_{p=\beta}^{l_k}\phi^{pn_k}(C_k)\right)>S$. Thus, 
\[
\diam(L_t)
\geq \diam(L_k\pf L_t\pf \tY_t^1\pf \tY_n^0)
\geq \diam\left(\bigcup_{p=\beta}^{l_k}\phi^{pn_k}(C_k)\right)
>S.
\]
Other the other hand, by Corollary \ref{corollary: gates are bounded} and our choice of $S$ in \ref{item: choice of S}, we have 
\[
\diam \left(L_t / 
\stab(\tY_{t}^1) \cap \stab(\tY_{t+1}^0) \cap \stab(\tY_{t+1}^0)^{\phi_k^{-n_k\beta}} \right) 
\leq S.
\]
This shows that the subgroup $\stab(\tY_{t}^1) \cap \stab(\tY_{t+1}^0) \cap \stab(\tY_{t+1}^0)^{\phi_k^{-n_k\beta}}$ is nontrivial. 
\begin{figure}
    \centering
    \includegraphics[width=0.75\linewidth]{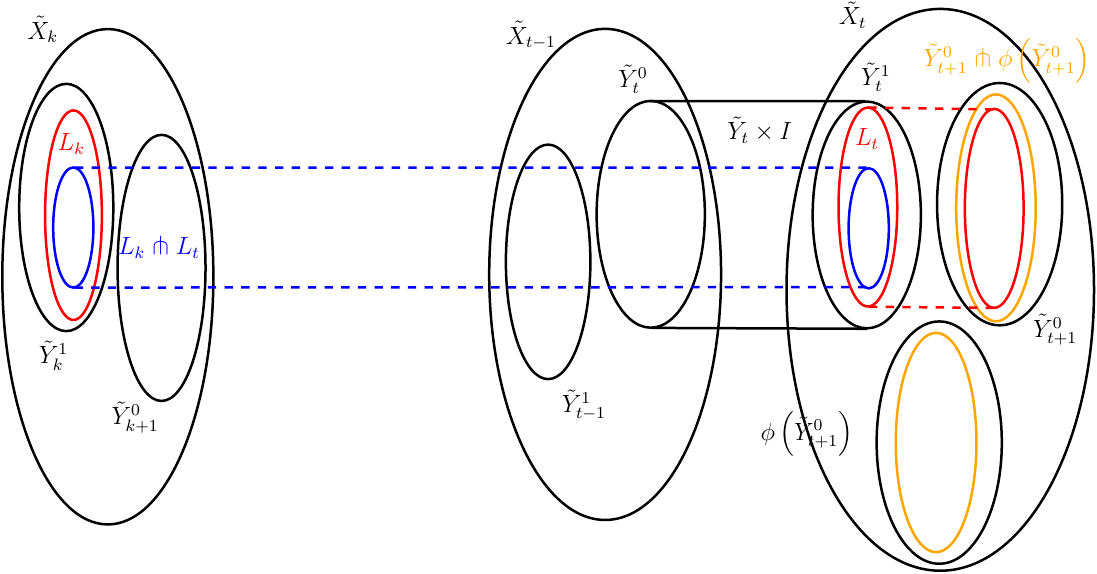}
    \caption{Two quasilines $L_k$ and $L_t$ are shown as the red regions. The blue region is the bridge of their projections $L_k \pf L_t \cong L_t \pf L_k$.}
    \label{figure: L_t}
\end{figure}

\medskip

\noindent
\textbf{Step 5.} \textit{Proving $\phi\in \pstab(\rho)$ by contradiction.} 
Based on the construction of the quasiline $(L_t,\phi_t)$ in Step 4, we claim that $(\phi_k^{n_k})^{m\lv \phi_{t}\rv} = \phi_{t}^{\pm mn_k\lv \phi_k\rv}$. Since $(L_t,\phi_t)$ belongs to an isomorphism class of quasilines in \ref{item: representative of quasilines and trivial hyperplane}, there exists $i_t$ such that $(L_t,\phi_t)$ is isomorphic to the representative $(L_{i_t}', \phi_{i_t}')$. Then the element
\[
\phi
= \phi_k^{n_k\beta}
= \phi_k^{n_k\cdot m\cdot |\phi_1'|\cdots |\phi_s'|}
= \phi_t^{\pm n_k\cdot m\cdot |\phi_1'|\cdots \widehat{|\phi_{i_t}'|}\cdots |\phi_s'|\cdot |\phi_k|}
\]
is a power of $\phi_t$ and hence is an element of $\stab(\rho[t,t+1])$. This contradicts the assumption in \eqref{assume}. Therefore, we must have $\phi = \phi_k^{n_k\beta}\in \pstab(\rho)$.

Now, we prove the claim $(\phi_k^{n_k})^{m\lv \phi_{t}\rv} = \phi_{t}^{\pm mn_k\lv \phi_k\rv}$.
The idea is that we are going to compare two quasilines in the same vertex space $\tX_{t}$. To do this, recall from~\eqref{c_k4} that
\begin{equation}\label{c_k5}
\bigcup_{p=\beta}^{l_k}\phi^{pn_k}(C_k)
\subset L_{k}\pf L_t \pf \tY_t^1\pf \tY_n^0
\subset L_{k}\pf L_t
\subset L_k\pf \tY_t^1.
\end{equation}
By \eqref{l_k}, $l_k \geq 2m\cdot |\phi_1'|\cdots |\phi_s'| \cdot \cL\geq 2\beta$. Thus,
\[
\bigcup_{p=\beta}^{2\beta}\phi^{pn_k}(C_k)
\subset\bigcup_{p=\beta}^{l_k}\phi^{pn_k}(C_k)
\subset L_k\pf \tY_t^1.
\]
Because $\phi_k^{n_k\beta}\in \stab(L_k)\cap \stab(\rho[k-1,t])\leq \stab(L_k\pf \tY_t^1)$, we have
\[
\bigcup_{p\in \bZ}\phi^{pn_k}(C_k)
= \bigcup_{p\in\bZ}\phi_k^{pn_k\beta}\left(\bigcup_{p=\beta}^{2\beta}\phi^{pn_k}(C_k)\right)
\subset L_k\pf \tY_t^1.
\]
By taking its projection to $\tY_t^1$, we obtain a quasiline in $\tY_t^1\subset \tX_t$ given by
\[
\left(J_t = \bigcup_{p\in \bZ}\phi^{pn_k}(C_t), \phi_k^{n_k} \right),
\]
where $C_t = \Pi_{\tY_t^1}(C_k)$.
We now compare two quasilines $(J_t,\phi_k^{n_k})$ and $(L_t, \phi_t)$ in $\tX_t$. 

By our construction, we have $\bigcup_{p=\beta}^{l_k}\phi^{pn_k}(C_k)
\subset L_k \pf \left(\bigcup_{p\in \bZ}\phi^{pn_k}(C_t)\right) = L_k\pf J_t$.
Combining this and~\eqref{c_k5}, we obtain
\[
\bigcup_{p=\beta}^{l_k}\phi^{pn_k}(C_k)
\subset \left(L_k \pf J_t\right)
\cap (L_k\pf L_t)
\subset L_k \pf J_t\pf L_t.
\]
It then follows from \eqref{bound} and our choice of $B$ in \ref{item: choice of B} that
\[
\diam\left(J_t\pf L_t\right)
\geq \diam\left(\bigcup_{p=\beta}^{l_k}\phi^{pn_k}(C_k)\right)
> \frac{B}{2}\geq Q.
\]
Note that $(L_t,\phi_t)$ and $(J_t,\phi_k^{n_k})$ are quasilines constructed in \ref{item: representative of quasilines and trivial hyperplane} and \ref{type2quasiline} respectively.
Apply Lemma~\ref{lemma: quasi-lines with high bridge} to two quasilines $\left(J_t, \phi_k^{n_k} \right)$ and $(L_t,\phi_t)$. By our choice of $Q$ and $m$ in~\ref{type2quasiline}, we have 
\[
\left(\phi_k^{n_k}\right)^{m\lv \phi_{t}\rv} 
= \phi_{t}^{\pm m\left\lv \phi_k^{n_k}\right\rv}
= \phi_{t}^{\pm mn_k\lv \phi_k\rv}.
\]
This completes the proof of the claim.

\medskip

\noindent
\textbf{Step 6.} \textit{Proving the claim $\alpha\cdot \gamma\cap \gamma\neq \emptyset$.}
Based on Step 5, we obtain an element $\phi\in \pstab(\rho)$. We claim that by taking $\alpha = \phi^d$ for some nonzero integer $d$, we have $\alpha\cdot \gamma\cap \gamma\neq \emptyset$ as desired in the original claim in Step 2.

Recall from \eqref{l_k} that 
\[
l_k
\geq 2m\cdot |\phi_1'|\cdots |\phi_s'| \cdot\cL
= 2\beta\cL
>  \beta\cL.
\]
Thus, $n_kl_k>n_k\beta\cL$.
By Remark \ref{remark: convex subcomplex C already gives essential hyperplane}, there is an essential hyperplane $H$ in $L_k$ crossing $\hgamma_b$, whose carrier $N(H)$ intersects $C_k$. Its orbit under the action of $\la \phi = \phi_k^{n_k\beta}\ra$ then gives at least $\cL+1$ intersections with $\hgamma_b$ given by
\[
v_0 = H \cap \hgamma_b,\quad 
v_1 = \phi(H) \cap \hgamma_b,\quad \dots,\quad
v_{\cL} = \phi^{\cL}(H) \cap \hgamma_b,
\]
which induce $\cL+1$ vertices of $H$ given below
\[
v_0,\quad \phi^{-1}(v_1),\quad\dots,\quad \phi^{-\cL}(v_{\cL}).
\]
By our construction of $\cL$ in Step 1\ref{item: possibilites to cross a hyperplane}, there exist $p\neq q$ such that $\phi^{-p}(v_p) = \phi^{-q}(v_q)$. Therefore, 
\[
v_q = \phi^{p-q}(v_p)\in \hgamma_b\cap \phi^{p-q}(\hgamma_b)
\subset \gamma\cap \phi^{p-q}(\gamma).
\]
This shows $\gamma\cap \alpha\cdot \gamma\neq \emptyset$ with $\alpha = \phi^{p-q}$. As mentioned in Step 2, since the combinatorial geodesic $\gamma$ is arbitrary, we conclude that \eqref{equation: upper bounda of diameter in main thm} holds. This completes the proof.
\end{proof}

\subsection{Almost cyclonormal paths}
In the proof of Theorem~\ref{theorem: main theorem}, we use the condition of almost cyclonormal edges to construction quasilines. In fact, this condition can be generalized by assuming almost cyclonormality for length-$n$ path stabilizers. 
\begin{defn}
Let $X_\Gamma$ be a graph of nonpositively curved cube complexes with the associated Bass--Serre tree $\cT$. Let $n>0$ be an integer. We say that $X_\Gamma$ has \textit{almost cyclonormal paths of length $n$} if for any vertex $\tv$ of $\cT$, the following conditions hold:
\begin{enumerate}
    \item For any path $\rho: [0,n]\to \cT$ of length $n$ starting from $\tv$. If $g_1,g_2,g_3\in \stab(\tv)$ represent distinct left cosets of $\pstab(\rho)$ in $\stab(\tv)$, then the subgroup $\pstab(\rho)^{g_1}\cap \pstab(\rho)^{g_2}\cap \pstab(\rho)^{g_3}$ is either trivial or cyclic.
    \item For any pair of distinct paths $\rho, \sigma: [0,n]\to \cT$ of length $n$ starting from $\tv$. If $g_1,g_2\in \stab(\tv)$ represent distinct left cosets of $\pstab(\sigma)$ in $\stab(\tv)$, then the subgroup $\pstab(\rho)\cap \pstab(\sigma)^{g_1}\cap \pstab(\sigma)^{g_2}$ is either trivial or cyclic.
\end{enumerate}
\end{defn}
Notice that $X_\Gamma$ has almost cyclonormal edges when it has almost cyclonormal paths of length $1$.
\begin{cor}\label{corollary: general version of main thm}
Let $X$ be a compact cube complex that splits as a graph of virtually special cube complexes $X_\Gamma$. If $X_\Gamma$ has almost cyclonormal paths of length $n$ for some integer $n>0$, then $\pi_1X$ has finite stature with respect to the vertex groups.
\end{cor}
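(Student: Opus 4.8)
The plan is to re-run the proof of Theorem~\ref{theorem: main theorem}, replacing single edges of the Bass--Serre tree $\cT$ systematically by subpaths of length $n$. As there, Lemma~\ref{lemma: vefity finite stature on paths} (applicable since $X_\Gamma$ is a graph of virtually special cube complexes) reduces the statement to producing a uniform $P$ with $\diam\bigl((\tY_1^0\pf\cdots\pf\tY_m^0)/\pstab(\rho)\bigr)\le P$ for every finite path $\rho:[0,m]\to\cT$ from $\tv$ with $\pstab(\rho)$ infinite, where $\tY_1^0\pf\cdots\pf\tY_m^0$ is the representing complex of Lemma~\ref{lemma: represent path stabilizer}. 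Since $\pstab(\rho)=\bigcap_{i=0}^{m-n}\pstab(\rho[i,i+n])$, the length-$n$ path stabilizers $\pstab(\sigma)$ take over the role of the edge groups, and the hypothesis ``almost cyclonormal paths of length $n$'' (together with the evident analogue of Remark~\ref{remark: triple intersection}) controls their triple intersections just as almost cyclonormality of edges did. Paths of length $<n$ contribute only finitely many $\pi_1X_v$-conjugacy classes of $\pstab(\rho)$, since $\stab(\tv)$ acts with finitely many orbits on the ball $B_n(\tv)$ of $\cT$, so we may assume $m\ge n$.

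First I would set up the analogue of Step~1. The new ingredient is precisely the finiteness of $B_n(\tv)/\stab(\tv)$: it yields finitely many isometry types of representing complexes $\tY_\sigma/\pstab(\sigma)\to X_v$ of length-$n$ path stabilizers, each a compact virtually special cube complex locally isometric to $X_v$. Treating these as ``edge spaces'' and noting that all conjugating elements in the definition of almost cyclonormal paths of length $n$ lie in $\stab(\tv)=\pi_1X_v$, Corollary~\ref{corollary: gates are bounded} applied inside $X_v$ bounds the diameters of the quotients of the triple intersections $\pstab(\sigma)\cap\pstab(\tau)^{g_1}\cap\pstab(\tau)^{g_2}$ and $\pstab(\sigma)^{g_1}\cap\pstab(\sigma)^{g_2}\cap\pstab(\sigma)^{g_3}$; in the cyclic case these lift to quasilines $\tY_\sigma\pf\tY_\tau\pf g\tY_\tau$. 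From here the constants $S,h,Q,m,\cL,M,B,R,T,P$ are assembled exactly as in Step~1.

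For the core argument I would modify the Step~2 pigeonhole so that it extracts a \emph{good} element. Given a combinatorial geodesic $\gamma\subset\tY_1^0\pf\cdots\pf\tY_m^0$ of length $P$, observe that this complex lies inside $\tY_{\rho[0,n]}$, the representing complex of $\pstab(\rho[0,n])\le\pi_1X_v$, which is one of finitely many compact complexes; cutting $\gamma$ into pieces of length $R$ and pigeonholing on their images in $\tY_{\rho[0,n]}/\pstab(\rho[0,n])$ forces, once $P$ is large enough, two pieces $\gamma_a\ne\gamma_b$ with a common image, hence a nontrivial $g\in\pstab(\rho[0,n])$ with $g\gamma_a=\gamma_b$. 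If $g\in\pstab(\rho)$ we are done; otherwise its first-failure index $k$ satisfies $k\ge n$, so the length-$n$ subpath $\sigma=\rho[k,k-n]$ genuinely fits inside $\rho$ and $g\in\pstab(\sigma)\setminus\pstab(\tau)$ for $\tau=\rho[k,k+n]$ (extended past $\rho(m)$ if necessary). Almost cyclonormality of length-$n$ paths then makes $\pstab(\sigma)\cap\pstab(\tau)\cap\pstab(g\tau)$ trivial or infinite cyclic, and it is nontrivial since $\gamma_b\subset\gamma\cap g\gamma$ projects isometrically into $L_k:=\tY_\sigma\pf\tY_\tau\pf g\tY_\tau$, whose quotient has diameter $\le S<R$. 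A generator $\phi_k$ gives the first quasiline $(L_k,\phi_k)$, and Steps~3--6 transcribe: Proposition~\ref{proposition} yields a fellow-traveling $C_k$; one sets $\phi=\phi_k^{n_k\beta}$; if $\phi\notin\pstab(\rho)$ its first-failure index $t$ satisfies $t\ge k+n\ge2n$ (as $\phi_k$ fixes $\rho[k-n,k+n]$ and its extension), so the length-$n$ subpath at $\rho(t)$ again fits, a second quasiline $(L_t,\phi_t)$ is built, and comparing it with the quasiline projected from the $\phi$-translates of $C_k$ via Lemma~\ref{lemma: quasi-lines with high bridge} and the unique root property forces $\phi$ to be a power of $\phi_t$, a contradiction; finally the essential hyperplane of $L_k$ from Remark~\ref{remark: convex subcomplex C already gives essential hyperplane}, together with the bound $\cL$, yields $d\ne0$ with $\phi^d\gamma\cap\gamma\ne\emptyset$, giving the desired diameter bound.

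The steps I expect to need the most care are two. The first is the Step~1 bookkeeping: whereas Theorem~\ref{theorem: main theorem} had only finitely many edge spaces, here one must genuinely verify that $\stab(\tv)$ has finitely many orbits of length-$n$ paths and that the representing complexes $\tY_\sigma/\pstab(\sigma)$ are honestly finitely many compact virtually special complexes locally isometric to $X_v$, so that Corollary~\ref{corollary: gates are bounded} and Proposition~\ref{proposition} apply uniformly. The second is the good-element pigeonhole: $P$ must be chosen large relative to the (now uniform) diameters of $\tY_{\rho[0,n]}/\pstab(\rho[0,n])$, precisely so that the extracted $g$ has first-failure index $\ge n$ and the length-$n$ subpaths used to build $L_k$ and $L_t$ never run off the ends of $\rho$ --- an obstruction that simply does not arise in the $n=1$ case. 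Once these are in place, the quasiline constructions, the contradiction argument for $\phi\in\pstab(\rho)$, and the hyperplane-counting step go through essentially unchanged.
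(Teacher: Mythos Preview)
Your overall strategy matches the paper's, but the step you identify as ``the new ingredient'' --- that $\stab(\tv)$ acts with finitely many orbits on $B_n(\tv)$, equivalently on length-$n$ paths from $\tv$ --- is false in general, and both places where you invoke it (short paths, and finiteness of isometry types of $\tY_\sigma/\pstab(\sigma)$) collapse. Already for length-$2$ paths $\tv\text{--}\tv'\text{--}\tv''$ the $\stab(\tv)$-orbits biject with double cosets $\stab(\te)\backslash\stab(\tv')/\stab(\te')$ as $e'$ ranges over edge types at $v'$, and these are typically infinite (e.g.\ when $\stab(\tv')\cong F_2$ and the edge groups are cyclic). So orbit-finiteness cannot supply the uniform bounds you need for Step~1.

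The paper's replacement is Lemma~\ref{lem: bounded diameter for 2} and Corollary~\ref{lemma: bounded diameter for n}: by induction on the path length, using virtual specialness of $X_v$ and Corollary~\ref{corollary: gates are bounded}, there is a constant $S=S(n,X)$ with $\diam(\tY_\sigma/\pstab(\sigma))\le S$ for \emph{every} path $\sigma$ of length $\le n$ in $\cT$ --- even though there are infinitely many $\stab(\tv)$-orbits of such $\sigma$. The induction step decomposes $\sigma=\sigma[0,1]*\sigma[1,\ell]$ and bounds the gate of two local isometries of diameter $\le S_{\ell-1}$ into $X_v$ by a constant depending only on $X_v$ and $S_{\ell-1}$. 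This diameter bound is what yields finitely many isomorphism types of representing complexes, finitely many quasiline types $\tY_\sigma\pf\tY_\xi\pf g\tY_\xi$, and a well-defined pigeonhole on $\tY_{\rho[0,n]}/\pstab(\rho[0,n])$. With that correction your Steps~2--6 are fine and agree with the paper; the paper further restricts first to paths of length divisible by $n$ and then handles the remainder $s=kn+m$ by one more application of Lemma~\ref{lem: bounded diameter for 2}, which also avoids having to extend $\tau$ past $\rho(m)$.
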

Before proving Corollary \ref{corollary: general version of main thm}, we introduce the following results as analogues of Corollary \ref{corollary: gates are bounded}.

\begin{lem}\label{lem: bounded diameter for 2}
Let $X$ be a compact virtually special cube complex. For any constant $S>0$, there exists a constant $T$ depending on $X,S$ such that the following holds:

For any integer $m>0$ and a sequence of local isometries $A_1,\cdots, A_m\to X$ of compact virtually special cube complexes. Let $\tA_i$ be the based elevation of $A_i$ to $\tX$. If $\diam(A_i)\leq S$ for each $i$, then 
\[
\diam\left(\pf_{i=1}^mg_i\tA_i / \cap_{i=1}^m\stab(g_i\tA_i)\right)\leq T
\]
for any $g_1,\cdots, g_m\in \pi_1X$.
\end{lem}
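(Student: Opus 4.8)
The plan is to reduce the statement to Corollary~\ref{corollary: gates are bounded}(1) by replacing the arbitrary family $A_1,\dots,A_m$ with a \emph{finite} fixed list of model complexes. First I would observe that, since $X$ is compact and each $A_i$ admits a local isometry to $X$ with $\diam(A_i)\le S$, there are only finitely many isomorphism classes of pairs $(A,f)$ consisting of a compact nonpositively curved cube complex $A$ with $\diam(A)\le S$ together with a local isometry $f\colon A\to X$: the number of vertices of such an $A$ is bounded in terms of $S$ and $\dim X$, hence there are finitely many combinatorial types of $A$, and for each type only finitely many local isometries to the compact complex $X$. Every $A_i$ is therefore isomorphic, as a complex-over-$X$, to one of a fixed finite collection $B_1,\dots,B_N\to X$. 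Moreover each $B_j$ is virtually special: it is a compact nonpositively curved cube complex that locally isometrically maps to the virtually special $X$, so $\pi_1B_j$ is a (convex-cocompact) subgroup of the virtually special group $\pi_1X$, and by passing to a finite cover of $X$ one sees $B_j$ has a finite special cover (alternatively, this is exactly the input Corollary~\ref{corollary: gates are bounded} already handles, since its hypothesis is only that the $A_i$ are compact virtually special with a local isometry to $X$).

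Next I would apply Corollary~\ref{corollary: gates are bounded}(1) to the finite list $B_1,\dots,B_N\to X$. This yields a single constant $T$, depending only on these local isometries — and hence only on $X$ and $S$ — such that for any integer $m>0$, any choice of (not necessarily distinct) indices $n_1,\dots,n_m\in\{1,\dots,N\}$, and any $g_1,\dots,g_m\in\pi_1X$,
\[
\diam\!\left(\pf_{i=1}^m g_i\tB_{n_i}\big/\textstyle\bigcap_{i=1}^m\pi_1B_{n_i}^{g_i^{-1}}\right)\le T,
\]
where $\tB_{n_i}$ is the based elevation of $B_{n_i}$ to $\tX$. Finally, given the actual family $A_1,\dots,A_m$ with $\diam(A_i)\le S$, choose for each $i$ an isomorphism $A_i\cong B_{n_i}$ over $X$; this identifies the based elevation $\tA_i$ with $\tB_{n_i}$ and the stabilizer $\stab(g_i\tA_i)=\pi_1A_i^{g_i^{-1}}$ with $\pi_1B_{n_i}^{g_i^{-1}}$, compatibly with the $\pi_1X$-action. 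Hence $\pf_{i=1}^m g_i\tA_i/\bigcap_{i=1}^m\stab(g_i\tA_i)$ is isometric to the complex displayed above, and its diameter is $\le T$, as required.

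The only point requiring care — and the place I expect to spend the most effort — is the finiteness claim in the first paragraph: that a uniform diameter bound on $A_i$, plus the existence of a local isometry to the fixed compact $X$, forces finitely many isomorphism classes of $(A_i,f_i)$ over $X$. The bound on $\diam(A_i)$ together with the local finiteness of $A_i$ (every vertex link embeds in a vertex link of $X$, so vertex degrees are bounded by the maximal degree in $X$) gives a uniform bound on the number of cells of $A_i$; then the attaching data is a choice among finitely many possibilities, and the local isometry $f_i$ is determined by finitely much combinatorial data once $A_i$ is fixed, so there are finitely many $(A_i,f_i)$ up to isomorphism over $X$. One should also note that this is essentially the same bookkeeping already used at the end of the proof of Corollary~\ref{corollary: gates are bounded}(2), so it may be cited rather than reproved in detail.
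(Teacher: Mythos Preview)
Your proposal is correct and follows essentially the same approach as the paper: enumerate the finitely many local isometries $B_1,\dots,B_N\to X$ of compact nonpositively curved cube complexes of diameter $\le S$, then apply Corollary~\ref{corollary: gates are bounded}(1) to this fixed list to obtain the uniform constant $T$. The paper's version is terser (it simply asserts the finiteness of such local isometries and then invokes the corollary), but the argument is the same; your detour about virtual specialness of the $B_j$ is unnecessary since the $A_i$ are already assumed virtually special, so each $B_{n_i}$ you actually use inherits this.
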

\begin{proof}
Note that there are finitely many isomorphism classes of nonpositively curved cube complexes of diameter $\leq S$ that admit local isometries to $X$.
These yield finitely many possible local isometries to $X$, say $B_1,\cdots, B_{n}\to X$. Here, the complexes $B_1,\cdots, B_{n}$ may not be distinct, since they may correspond to different local isometries of the same isomorphism class. Then each local isometries $A_i\to X$ coincides with $B_{n_i}\to X$ for some $n_i$. 
As a result, by Corollary~\ref{corollary: gates are bounded}(1), there exists a constant $T$ depending on local isometries $B_1,\cdots, B_n\to X$, and hence depending on $X,S$, such that 
\[
\diam\left( \pf_{i=1}^mg_i \tA_{i}/\cap_{i=1}^m \pi_1A_{i}^{g_i^{-1}} \right)
= \diam\left( \pf_{i=1}^mg_i \tB_{n_i}/\cap_{i=1}^m \pi_1B_{n_i}^{g_i^{-1}} \right)
\leq T
\]
for any integer $m$ and elements $g_1,\cdots, g_m\in \pi_1X$.
\end{proof}

Recall that in the universal cover $\tX$, a finite path $\rho: [0,m]\to \cT$ corresponds to a subspace
\[
\bigsqcup_{i=0}^m \tX_i \sqcup \left(\bigsqcup_{j=1}^m\tY_j\times[0,1]\right)/\sim 
\]
where $X_0, X_1,\dots, X_m$ and $Y_1,\dots, Y_m$ are (not necessary distinct) vertex spaces and edge spaces in $X_\Gamma$.
\begin{cor}\label{lemma: bounded diameter for n}
Let $X$ be a compact cube complex that splits as a graph of virtually special cube complexes $X_\Gamma$. Let $\cT$ be the associated Bass--Serre tree. Then for any integer $n\geq 1$, there exists a constant $S$ depending on $n$ and $X$, such that for any path $\rho: [0,m]\to \cT$ of length $1\leq m\leq n$, with the notation above, we have 
\[
\diam \left(\tY_1^0\pf \tY_m^0/\pstab(\rho) \right) \leq S.
\]
\end{cor}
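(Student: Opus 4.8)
The plan is to reduce the statement to Lemma \ref{lem: bounded diameter for 2} by bounding the diameters of the complexes that appear in the projection. Fix $n$ and let a path $\rho:[0,m]\to\cT$ with $1\le m\le n$ correspond, as in the statement of Lemma \ref{lemma: represent path stabilizer}, to the subspace $\bigsqcup_{i=0}^m \tX_i \sqcup (\bigsqcup_{j=1}^m \tY_j\times[0,1])/\!\sim$, where each $\tY_j^0\subset\tX_{j-1}$ is an elevation of the edge space $Y_j$ to the vertex space $X_{j-1}$. Since each $\tY_j\times I$ separates the other edge spaces along $\rho$ (exactly as in the proof of Lemma \ref{lemma: represent path stabilizer}), we have the identification $\tY_1^0\pf\tY_m^0 = \tY_1^0\pf\tY_2^0\pf\cdots\pf\tY_m^0$, and $\pstab(\rho)=\bigcap_{j=1}^m \stab(\tY_j^0)$ acts cocompactly on it.

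First I would observe that the edge spaces $Y_j$ and the attaching maps $\varphi_{Y_j}^{\pm}:Y_j\to X_{j-1},X_j$ range over a \emph{finite} set: because $X$ is compact, the underlying graph $\Gamma$ is finite, and there are only finitely many vertex spaces $X_v$ and edge spaces $X_e$, together with finitely many attaching maps among them. Hence there is a constant $S_0$, depending only on $X$, with $\diam(Y_e)\le S_0$ for every edge $e$ of $\Gamma$; equivalently, for each $j$ the elevation $\tY_j^0\subset\tX_{j-1}$ has the property that $\tY_j^0/\stab(\tY_j^0)$ is (a cover of) $Y_j$, so I can regard each $\tY_j^0$ as the based elevation to $\tX_{j-1}$ of one of finitely many local isometries $Y_e\to X_v$ of compact virtually special cube complexes of diameter $\le S_0$. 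After choosing for each vertex $v$ a lift $\tv$ with $\stab(\tv)=\pi_1X_v$ and conjugating so that $X_0=X_v$, the projection $\tY_1^0\pf\cdots\pf\tY_m^0$ and its quotient by $\pstab(\rho)=\bigcap_j\stab(\tY_j^0)$ become exactly an object of the form $\pf_{i=1}^m g_i\tA_i/\bigcap_i\stab(g_i\tA_i)$ considered in Lemma \ref{lem: bounded diameter for 2}, with the $A_i$ drawn from the finite list of edge-space local isometries and each $\diam(A_i)\le S_0$.

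Then I would apply Lemma \ref{lem: bounded diameter for 2} with input constant $S_0$: it produces a constant $T$, depending only on $X$ and $S_0$ (hence only on $X$), such that
\[
\diam\left(\pf_{i=1}^m g_i\tA_i \big/ \textstyle\bigcap_{i=1}^m \stab(g_i\tA_i)\right)\le T
\]
for every $m>0$ and all $g_1,\dots,g_m\in\pi_1X$. Since $m\le n$ is automatically allowed (the bound in Lemma \ref{lem: bounded diameter for 2} is uniform in $m$), I would simply set $S=T$; strictly speaking $n$ plays no essential role beyond bounding $m$, but I keep it in the statement for uniformity with the later application. The main obstacle is purely bookkeeping: one must verify that the base changes of basepoints and the conjugations used to make $X_0$ a fixed vertex space $X_v$ are consistent with the identification $\tY_1^0\pf\cdots\pf\tY_m^0=\tY_1^0\pf\tY_m^0$ and with the stabilizer computation $\pstab(\rho)=\bigcap_j\stab(\tY_j^0)$ from the proof of Lemma \ref{lemma: represent path stabilizer}; once this is set up correctly, the diameter bound is immediate from Lemma \ref{lem: bounded diameter for 2}. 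No genuinely new geometric input is needed beyond the finiteness of $\Gamma$ and the uniform bound already established for gates of virtually special complexes.
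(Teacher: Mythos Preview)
There is a genuine gap in your approach. Lemma~\ref{lem: bounded diameter for 2} requires the ambient complex to be compact \emph{virtually special}, and all the local isometries $A_i\to X$ must land in that single ambient complex. You propose to realise $\tY_1^0\pf\cdots\pf\tY_m^0$ as $\pf_{i=1}^m g_i\tA_i$ with the $A_i$ drawn from the finite list of edge--space attaching maps, but the elevations $\tY_j^0$ do \emph{not} all lie in the same vertex space: $\tY_1^0\subset\tX_0$ while $\tY_2^0\subset\tX_1$, etc. So if you want to apply Lemma~\ref{lem: bounded diameter for 2} inside the vertex space $X_0=X_v$, only $Y_1$ provides a local isometry to $X_v$; the other edge spaces along $\rho$ map into different vertex spaces and are not in $\pi_1X_v$--translates of anything in $\tX_0$. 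Alternatively, if you try to take the ambient complex to be the whole $X$, then $X$ is not assumed virtually special (indeed, establishing consequences of virtual specialness for $X$ is the goal of the paper), so Lemma~\ref{lem: bounded diameter for 2} does not apply.

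This is precisely why the paper's proof proceeds by induction on $n$: at each step one splits $\rho=\rho[0,1]*\rho[1,n]$ and works inside the single vertex space $X_1$, where both $\tY_1^1$ (with quotient $Y_1$ of diameter $\le S_1$) and $\tY_2^0\pf\tY_m^0$ (with quotient of diameter $\le S_{n-1}$ by the inductive hypothesis) live as convex subcomplexes. Only then can Lemma~\ref{lem: bounded diameter for 2} be invoked, with ambient space the virtually special vertex space $X_1$. Your one--shot application collapses exactly this inductive mechanism, and without it there is no way to bring all the $\tY_j^0$ into a common virtually special ambient complex.
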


\begin{proof}
We prove this by induction on $n$. When $n = 1$, consider edge spaces $\{X_e\mid e\in \Gamma\}$ of $X_Gamma$. Take
\[
S_1 = \max\{\diam(X_e)\mid e\in E(\Gamma)\}.
\]
Since $\tY_1/\stab(\rho[0,1]) = Y_1$ is an edge space, its diameter is bounded by $S_1$.

Now suppose the statement holds for $n-1$ with a constant $S_{n-1}$. In particular, by our construction of $S_1$, we have $S_{n-1}\geq S_1$. For each vertex space $X_v$, by applying Lemma~\ref{lem: bounded diameter for 2} to the constant $S_{n-1}$, we obtain a constant $T_{v}$ depending on $X_v$ and $S_{n-1}$. We claim that $S = \max\{T_{v}, S_{n-1}\mid v\in V(\Gamma)\}$ satisfies the statement.

Indeed, for any path $\rho: [0,m]\to \cT$, if $m\leq n-1$, then the induction hypothesis gives 
\[
\diam \left(\tY_1^0\pf \tY_m^0/\pstab(\rho) \right) \leq S_{n-1}\leq S.
\]
If $\rho$ has length $n$, then we can decompose it as the concatenation $\rho = \rho[0,1]*\rho[1,n]$ of a length-$1$ path $\rho[0,1]$ and a length-$(n-1)$ path $\rho[1,n]$. In this case, we have 
\begin{align*}
\diam \left(\tY_1^0\pf \tY_m^0/\pstab(\rho) \right)
&= \diam \left(\tY_1^0\pf \left(\tY_2^0\pf \tY_m^0\right)/\pstab(\rho[0,1])\cap \pstab(\rho[1,n]) \right)\\
&= \diam \left(\tY_1^0\pf \left(\tY_2^0\pf \tY_m^0\right)/\stab\left(\tY_1^0\right)\cap \stab\left(\tY_2^0\pf \tY_m^0\right) \right).
\end{align*}
Note that $X_1$ is a vertex space $X_v$ for some $v\in V(\Gamma)$.
Because $\tY_1^0$ is an elevation of $\tY_1^0/\stab\left(\tY_1^0\right)$ to $\tX_v$, and $\tY_2^0\pf \tY_m^0$ is an elevation of $\left(\tY_2^0\pf \tY_m^0\right)/\stab\left(\tY_2^0\pf \tY_m^0\right)$. By induction hypothesis, we have 
\[
\diam\left(Y_1\right)\leq S_1\leq S_{n-1},\quad
\diam\left(\tY_2^0\pf \tY_m^0\right)/\stab\left(\tY_2^0\pf \tY_m^0\right)\leq S_{n-1}.
\]
Thus, by Lemma~\ref{lem: bounded diameter for 2} and our construction of $T_v$, we have 
\[
\diam \left(\tY_1^0\pf \left(\tY_2^0\pf \tY_m^0\right)/\stab\left(\tY_1^0\right)\cap \stab\left(\tY_2^0\pf \tY_m^0\right) \right)\leq T_v\leq S.
\]
This finishes the induction and hence completes the proof.
\end{proof}

\begin{proof}[Proof of Corollary \ref{corollary: general version of main thm}]
Suppose that the graph of nonpositively curved cube complexes $X_\Gamma$ has almost cyclonormal paths of length $n$.
As mentioned in the proof of Theorem \ref{theorem: main theorem}, it suffices to show that for any path $\rho: [0,s]\to \cT$ in the associated Bass--Serre tree, we have 
\begin{equation}\label{equality: bound in general case}
\diam\left( \tY_1^0\pf \tY_s^0/\pstab(\rho) \right)
\leq P
\end{equation}
for some constant $P$ depending only on $X$. 
To prove \eqref{equality: bound in general case}, we first make the following observation of paths on $\cT$.

Let $\sigma: [0,m]\to \cT$ be a path of length $m\leq n$ starting from $\tv = \sigma(0)$, where $\pstab(\tv) = \pi_1X_v$. Suppose that it corresponds to the subspace $\bigsqcup_{i=0}^n \tX_i \sqcup \left(\bigsqcup_{j=1}^n\tY_j\times[0,1]\right)/\sim$ in the universal cover $\tX$. Denote by $\tY_\sigma$ the projection $\tY_1^0\pf \tY_n^0$. Then the pointwise stabilizer $\pstab(\sigma)$ is represented by a local isometry $\tY_\sigma/\pstab(\sigma)\to X_0 = X_v$. Similarly, let $\xi:[0,r]\to \cT$ be another (not necessarily distinct) path of length $r\leq n$ starting from $\tv = \xi(0)$. Then $\pstab(\xi)$ is represented by a local isometry $\tY_\xi/\pstab(\xi) \to X_v$.

Since $\sigma,\xi$ are both paths of length $\leq n$. By Corollary~\ref{lemma: bounded diameter for n}, there exists a constant $S$ such that
\begin{equation}\label{bound for path}
\diam\left(\tY_\sigma/\pstab(\sigma)\right)\leq S,\quad 
\diam\left(\tY_\xi/\pstab(\xi)\right)\leq S.
\end{equation}
By applying Lemma~\ref{lem: bounded diameter for 2} to the constant $S$ and the vertex space $X_v$, we obtain a constant $T_{v}$. In particular, for any $g\in \pi_1X_v$, we have
\begin{equation}\label{bound for triple}
\diam\left(\tY_{\sigma}\pf \tY_{\xi} \pf g\tY_{\xi}/\pstab(\sigma)\cap \pstab(\xi)\cap \pstab(\xi)^{g^{-1}}\right)\leq T_v.
\end{equation}
So there are only finitely many isomorphism classes of such complexes. 

Suppose that $\sigma,\xi$ have length $n$.
When there exists $g\in \pstab(\sigma)\setminus \pstab(\xi)$, the three subgroups $\pstab(\sigma), \pstab(\xi), \pstab(\xi)^{g^{-1}}$ are distinct in $\pstab(\tv)$. Because the $X_\Gamma$ has almost cyclonormal paths of length $n$, the intersection $\pstab(\sigma)\cap \pstab(\xi)\cap \pstab(\xi)^{g^{-1}}$ is either trivial or infinite cyclic. When it is cyclic, by taking a generator $\phi$, we obtain a quasiline
\begin{equation}\label{quasiline n length type}
\left( \tY_{\sigma}\pf \tY_{\xi} \pf g\tY_{\xi}, \phi \right).
\end{equation}
Since there are finitely many isomorphism classes of complexes of the form in \eqref{bound for triple}, there are finitely many isomorphism classes of such quasilines.

Now we apply the same argument as in the proof of Theorem \ref{theorem: main theorem}, and replace the quasiline representatives in Step 1\ref{item: representative of quasilines and trivial hyperplane} with the finitely many isomorphism classes of quasilines of the form in \eqref{quasiline n length type}. The same proof then yields a constant $P_n$ such that 
\[
\diam\left( \tY_1^0\pf \tY_{kn}^0/\pstab(\rho) \right) \leq P_n
\]
for any integer $k\geq 0$. This proves \eqref{equality: bound in general case} when the length of $\rho$ is $kn$.

More generally, for a path $\rho:[0,s]\to \cT$ of length $s$, suppose $s = kn+m$ for some integers $k\geq 0$ and $0\leq m<n$. Then we can decompose $\rho$ as the concatenation $\rho = \rho[0,kn]* \rho[kn, s]$. Recall that it corresponds to the subspace $\bigsqcup_{i=0}^s \tX_i \sqcup \left(\bigsqcup_{j=1}^s\tY_j\times[0,1]\right)/\sim$ in $\tX$. Because the length of $\rho[0,kn]$ is a multiple of $n$, the above argument shows that 
\[
\diam\left( \tY_1^0\pf \tY_{kn}^0/\pstab(\rho[0,kn]) \right)
= \diam\left( \tY_{kn}^1\pf \tY_{1}^1/\pstab(\rho[0,kn]) \right)
\leq P_n.
\]
Since $m<n$, Corollary~\ref{lemma: bounded diameter for n} and the discussion in \eqref{bound for path} imply that 
\[
\diam\left( \tY_{kn+1}^0\pf \tY_{s}^0/\pstab(\rho[kn,s]) \right)\leq S
\]
Suppose $X_{kn} = X_u$ for some vertex $u\in V(\Gamma)$.
By applying Lemma~\ref{lem: bounded diameter for 2} to the constant $\max\{P_n,S\}$ and local isometries
\[
\left( \tY_{kn}^1\pf \tY_{1}^1/\pstab(\rho[0,kn]) \right)\to X_{kn} = X_u,\quad
\tY_{kn+1}^0\pf \tY_{m}^0/\pstab(\rho[kn,s]) \to X_u,
\]
we obtain a constant $P_u$ such that
\begin{align*}
\diam\left( \tY_1^0\pf \tY_s^0/\pstab(\rho) \right)
&= \diam\left( \left(\tY_1^0\pf \tY_{kn}^0\right)\pf  \left(\tY_{kn+1}^0\pf \tY_{m}^0\right)/\pstab(\rho) \right)\\
&= \diam\left( \left(\tY_{kn}^1\pf \tY_{1}^1\right)\pf  \left(\tY_{kn+1}^0\pf \tY_{m}^0\right)/\pstab(\rho)\right)\\
&\leq P_u.
\end{align*}
Therefore, we obtain \eqref{equality: bound in general case} by taking the maximum $P = \max\{P_u\mid u\in V(\Gamma)\}$. This finishes the proof.
\end{proof}

\bibliographystyle{alpha}	
\bibliography{ref}

\end{document}